\documentclass[12pt]{amsart}
\usepackage{amsfonts,amssymb,amsmath,amscd,amsthm, amstext,bm,color}
\usepackage[colorlinks, citecolor=blue,pagebackref,hypertexnames=false]{hyperref}
\usepackage{enumerate}
\usepackage{epstopdf}
\usepackage{fancyhdr}
\usepackage{geometry}
\usepackage{mathrsfs}
\usepackage{txfonts}
\usepackage{tikz}
\usepackage{url}
\usepackage{ulem}
\usepackage[numbers,sort&compress]{natbib}
\allowdisplaybreaks

\setlength{\topmargin} {-0.4 cm}
 \oddsidemargin 4mm
 \evensidemargin -4mm
 \headsep=0.8cm
\headheight=13 pt
\textheight 220mm
 \textwidth 168mm
 \parskip=2.0pt
  \setlength{\lineskip}{1.6pt}
\hfuzz=6pt
\widowpenalty=10000

\newtheorem{theorem}{Theorem}[section]
\newtheorem{proposition}[theorem]{Proposition}
\newtheorem{lemma}[theorem]{Lemma}

\newtheorem{remark}[theorem]{Remark}

\newtheorem{definition}[theorem]{Definition}


\begin{document}

\newcommand{\norm}[1]{\left\Vert#1\right\Vert}
\newcommand{\abs}[1]{\left\vert#1\right\vert}
\newcommand{\set}[1]{\left\{#1\right\}}
\newcommand{\Real}{\mathbb{R}}
\newcommand{\RR}{\mathbb{R}^n}
\newcommand{\supp}{\operatorname{supp}}
\newcommand{\card}{\operatorname{card}}
\renewcommand{\L}{\mathcal{L}}
\renewcommand{\P}{\mathcal{P}}
\newcommand{\T}{\mathcal{T}}
\newcommand{\A}{\mathcal{A}}
\newcommand{\K}{\mathcal{K}}
\renewcommand{\SS}{\mathcal{S}}
\newcommand{\blue}[1]{\textcolor{blue}{#1}}
\newcommand{\red}[1]{\textcolor{red}{#1}}
\newcommand{\Id}{\operatorname{I}}

\title[CMO-Dirichlet  problem for the Schr\"odinger  equation]
{The CMO-Dirichlet  problem for the Schr\"odinger  equation in the upper half-space and characterizations of CMO}

\author[L. Song, L.C. Wu]{Liang Song  and    Liangchuan Wu*}
\thanks{*Corresponding author}

\address{
   Liang Song,
    Department of Mathematics,
    Sun Yat-sen University,
    Guangzhou, 510275,
    P.R.~China}
\email{songl@mail.sysu.edu.cn}

\address{
   Liangchuan Wu,
    Department of Mathematics,
    Sun Yat-sen University,
    Guangzhou, 510275,
    P.R.~China}
\email{wulchuan@mail2.sysu.edu.cn}

\subjclass[2010]{42B35, 42B37, 35J10}
\keywords{CMO, Schr\"odinger operators,  Dirichlet problem, BMO, tent spaces}

\begin{abstract}
 Let $\L$ be a Schr\"odinger operator of the form $\L=-\Delta+V$ acting on $L^2(\mathbb R^n)$ where the nonnegative
potential $V$ belongs to  the reverse H\"older class    ${\rm RH}_q$ for some $q\geq (n+1)/2$. Let ${\rm CMO}_{\L}(\mathbb{R}^n)$ denote the function space of vanishing mean oscillation associated to $\L$.
In this article we will show that a function $f$ of ${\rm CMO}_{\L}(\mathbb{R}^n) $ is the trace of  the solution to  $\mathbb{L}u=-u_{tt}+\L u=0$, $u(x,0)=f(x)$, if and only if, $u$ satisfies a Carleson condition
$$
\sup_{B: \ {\rm balls}}\mathcal{C}_{u,B} :=\sup_{B(x_B,r_B): \ {\rm balls}}  r_B^{-n}\int_0^{r_B}\int_{B(x_B, r_B)}   \big|t \nabla  u(x,t)\big|^2\, \frac{ dx\, dt } {t} <\infty,
$$
and
$$
     \lim _{a \rightarrow 0}\sup _{B: r_{B} \leq a}  \,\mathcal{C}_{u,B}
     =  \lim _{a \rightarrow \infty}\sup _{B: r_{B} \geq a}  \,\mathcal{C}_{u,B}
     =   \lim _{a \rightarrow \infty}\sup _{B: B \subseteq \left(B(0, a)\right)^c}  \,\mathcal{C}_{u,B}=0.
 $$
This  continues the lines of  the previous characterizations   by Duong, Yan and Zhang \cite{DYZ}  and Jiang and Li \cite{JL} for the ${\rm BMO}_{\L}$ spaces, which were founded by Fabes, Johnson  and Neri \cite{FJN} for the classical BMO space. For this purpose, we will prove two new characterizations of the ${\rm CMO}_{\L}(\mathbb{R}^n)$ space,  in terms of mean oscillation and the theory of tent spaces, respectively.
 \end{abstract}

\maketitle




\section{Introduction}\label{sec:1}
\setcounter{equation}{0}

The  space of  bounded mean oscillation  (BMO)   was introduced by John and Nirenberg \cite{JN}. A locally integrable function $f$ on ${\mathbb R}^n$ is said to be in  ${\rm BMO}({\mathbb R}^n)$,   if
$$
    \|f\|_{{\rm BMO}(\mathbb{R}^n)}=\sup_{B} \frac{1}{|B|}\int_B\left|f(y)-f_B\right| dy<\infty,
$$
where the supremum is taken over all balls $B\subseteq \mathbb{R}^n$ and   $f_B:=\frac{1}{|B|}\int_B f(x) \, dx$.

A celebrated theorem of Fefferman and Stein  \cite{FS} states that  a function $f$ of BMO is the trace of  the solution to
$$
 \begin{cases}
  \partial_{tt} u(x,t)+\Delta u(x,t)=0,     &(x,t)\in {\mathbb R}^{n+1}_+; \\
      u(x,0)=f(x),  \ & x\in \mathbb{R}^n
\end{cases}
$$
where  $u$ satisfies
\begin{equation}\label{eqn:FS-Carleson}
    \sup_{x_B, r_B} r_B^{-n}\int_0^{r_B}\!\int_{B(x_B, r_B)}   \big|t \nabla  u(x,t)\big|^2\, \frac{ dx\, dt } {t} <\infty,
\end{equation}
where $\nabla =(\nabla_x ,\,  \partial_t)$.  Expanding on this result, Fabes, Johnson and Neri \cite{FJN} showed that the condition \eqref{eqn:FS-Carleson} characterizes all the harmonic functions whose traces are in ${\rm BMO}(\mathbb{R}^n)$.   We refer the reader to \cite{SW1} for the earlier study of the  $H^p$ traces,  and to  \cite{ FN1, FN, DKP,JXY} for further results on  this topic.

In the last two decades,  the theory of BMO spaces associated to differential  operators  attracted lots of attentions. See for example, \cite{DGMTZ,ADM, DY1,AMR,HM}.  Especially, consider the Schr\"odinger operator
\begin{equation}\label{eqn:Schrodinger}
    \mathcal{L}=-\Delta+V(x) \   \    { \rm on } \  \  L^{2}(\mathbb{R}^{n}), \quad n \geq 3,
\end{equation}
where the nonnegative potential $V$ is not identically zero, and $V\in {\rm RH}_q$  for  some $q> n/2$. Recall that  $V\in {\rm RH}_q$  means that   $V\in L^{q}_{\rm loc}(\mathbb{R}^n), V\geq 0$, and
there exists a  constant $C>0$ such that  the reverse H\"older inequality
\begin{equation}\label{eqn:Reverse-Holder}
  \left(\frac{1}{|B|} \int_{B} V(y)^{q} d y\right)^{1 / q} \leq \frac{C}{|B|} \int_{B} V(y)\, d y
\end{equation}
holds for all   balls $B$ in $\mathbb{R}^n$. Recall that  $f$ belongs to  ${\rm BMO}_{{\mathcal{L}}}({\mathbb R}^n)$ (\cite{DGMTZ}) if  $f$ is a locally integrable function and satisfies
\begin{equation}\label{eqn:def-BMOL-norm}
     \|f\|_{{\rm BMO}_{\L}(\mathbb{R}^n)}:=\sup_{B=B(x_B,r_B):\, r_B<\rho(x_B)}\frac{1}{|B|} \int_{B}\left|f(y)-f_{B}\right|   d y+\sup_{B=B(x_B,r_B):\, r_B\geq \rho(x_B)} \frac{1}{|B|} \int_{B}|f(y)|\, d y <\infty.
\end{equation}
Here, the  function $\rho(x)$,  introduced by Z.W. Shen \cite{Shen,Shen1},  is defined by
\begin{equation}\label{eqn:critical-funct}
 \rho(x)=\sup \left\{r>0: \frac{1}{r^{n-2}} \int_{B(x, r)} V(y)\, d y \leq 1\right\}.
\end{equation}
Note that this ${\rm BMO}_{\L}(\mathbb{R}^n)$ space is a proper subspace of the classical $\rm BMO$ space and when $V\equiv 1$, ${\rm BMO}_{-\Delta+1}$ is just the ${\rm bmo}$ space  introduced by Goldberg \cite{Go1}. It is  known that  there is an alternative characterization of ${\rm BMO}_{\L}(\mathbb{R}^n)$ that $f\in {\rm BMO}_{\L}(\mathbb{R}^n)$, if and only if, $f\in L^2(\mathbb{R}^n, (1+|x|)^{-(n+\beta)}dx)$ for some $\beta>0$ and
\begin{equation}\label{eqn:def-BMOL-norm-II}
   \|f\|_{\widetilde{{\rm BMO}}_{\L}(\mathbb{R}^n)}:=\sup_B \left(\frac{1}{|B|} \int_B \left| f(x)-e^{-r_B\sqrt{ \L}}f(x) \right|^2 dx\right)^{1/2}<\infty,
\end{equation}
where the supremum is taken over all balls in $\mathbb{R}^n$. Moreover, $\|f\|_{{\rm BMO}_{\L}(\mathbb{R}^n)}\approx \|f\|_{\widetilde{{\rm BMO}}_{\L}(\mathbb{R}^n)}$.
See, for example, \cite[Proposition~6.11]{DY2}.

Recently, Duong, Yan and Zhang \cite{DYZ} extended the study  by Fabes {\it et al}  \cite{FJN} to the Dirichlet problem  for the Schr\"odinger equation with ${\rm BMO}_{\L}$ traces,  and established the following  characterization: whenever $V\in {\rm RH}_q$ with $q\geq n$, a solution $u$ to the  equation
\begin{align}\label{equations-L}
 \begin{cases}
  \partial_{tt} u(x,t)+\L u(x,t)=0,     &(x,t)\in {\mathbb R}^{n+1}_+; \\
      u(x,0)=f(x),  \ & x\in \mathbb{R}^n
     \end{cases}
 \end{align}
 satisfies \eqref{eqn:FS-Carleson}, if and only if, $u$ can be represented as $u=e^{-t\sqrt{\L}}f$, where $f$ is in $\rm BMO_{\L}(\mathbb{R}^n)$ space.
 Very recently, the condition  $V\in {\rm RH}_q$ with  $q\geq n$ in \cite{DYZ},   was improved  by Jiang and Li \cite{JL}   to  $q\geq (n+1)/2$.

On the other hand, it came to our attention that   Martell {\it et al} \cite{MMMM1} established the well-posedness of  the Dirichlet problem for any homogeneous, second-order, constant complex coefficient elliptic system  in the upper half-space,  with boundary data in the ${\rm VMO}(\mathbb{R}^n)$ space of Sarason  \cite{Sa2}. Here VMO is the  BMO-closure  of
 ${\rm UC} \cap {\rm BMO}$, where ${\rm UC}$ denotes the class of all uniformly continuous functions. There is yet another significant  space of functions of vanishing mean oscillations, ${\rm CMO}(\mathbb{R}^n)${\footnote{ CMO is also called VMO in \cite{CW}.}}, which is defined by the closure in the $\rm BMO$ norm of $C_c^\infty(\mathbb{R}^n)$, the space of smooth functions with compact support. Obviously ${\rm CMO}(\mathbb{R}^n)$ is a proper subspace of $\rm VMO(\mathbb{R}^n)$. One well-known fact  is that the Hardy space $H^1(\mathbb{R}^n)$ is the dual space  of ${\rm CMO}(\mathbb{R}^n)$; see \cite[Theorem~4.1]{CW}.  Notably, in a subsequent  paper \cite{MMMM2} by Martell {\it et al}, the authors posed an open question to formulate and prove a well-posedness result for the Dirichlet problem in the upper half-space, for the elliptic system as in \cite{MMMM1} with $\rm CMO$ traces.

\smallskip

The main aim of this article is to  continue the lines of \cite{DYZ, JL} to process the study on the Dirichlet problem for the Schr\"odinger equation with boundary value in ${\rm CMO}_{\L}(\mathbb{R}^n)$.  The  ${\rm CMO}_{\L}(\mathbb{R}^n)$ is the space of functions of vanishing mean oscillation associated to $\L$, introduced  in \cite{DDSTY} by Deng, Duong, Tan, Yan and the first named author of this article  under a more general  setting.

\begin{definition}[\cite{DDSTY}]\label{def:CMO-Schrodinger-bySong}
  We say that a function $f\in {\rm CMO}_{\L}(\mathbb{R}^n)$ if  $f$ is in $ {\rm BMO}_{\L}(\mathbb{R}^n)$ and  satisfies the limiting conditions $\gamma_1(f)=\gamma_2(f)=\gamma_3(f)=0$, where
  \begin{align*}
    \gamma_{1}(f)&= \lim _{a \rightarrow 0}\sup _{B: r_{B} \leq a}\left(r_B^{-n}\int_{B}\left|f(x)-e^{-r_B\sqrt{\L}} f(x)\right|^{2} d x\right)^{1 / 2} ;\\
    \gamma_{2}(f)&= \lim _{a \rightarrow \infty}\sup _{B: r_{B} \geq a}\left(r_B^{-n} \int_{B}\left|f(x)-e^{-r_B\sqrt{\L}} f(x)\right|^{2} d x\right)^{1 / 2} ;\\
    \gamma_{3}(f)&= \lim _{a \rightarrow \infty}\sup _{B \subseteq \left(B(0, a)\right)^c}\left(r_B^{-n} \int_{B}\left|f(x)-e^{-r_B\sqrt{\L}} f(x)\right|^{2} d x\right)^{1 / 2}.
\end{align*}
We endow ${\rm CMO}_{\L}(\mathbb{R}^n)$ with the norm of ${\rm BMO}_{\L}(\mathbb{R}^n)$.
\end{definition}

Note that, whenever $\L=-\Delta$, i.e., $V\equiv 0$, the space ${\rm CMO}_{-\Delta}(\mathbb{R}^n)$   coincides  with ${\rm CMO}(\mathbb{R}^n)$; see  \cite[Proposition~3.6]{DDSTY}. Besides, the following results hold.

\begin{theorem}\label{thm:CMO-dual-known}
Suppose $V\in {\rm RH}_q$ for some $q> n/2$.
\begin{itemize}
	
	\item[(i) ]{\rm (\cite{DDSTY})}  ${\rm CMO}_{\L}(\mathbb{R}^n)$  is the predual space of $H_{\L}^1(\mathbb{R}^n)$.

	\smallskip

\item[(ii) ] {\rm (\cite{Ky})} ${\rm CMO}_{\L}(\mathbb{R}^n)$ is the closure of $C_c^\infty(\mathbb{R}^n)$ in the ${\rm BMO}_{\L}(\mathbb{R}^n)$ norm.
\end{itemize}
\end{theorem}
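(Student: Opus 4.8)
Both assertions already appear in the cited literature (see \cite{DDSTY,Ky}), so the plan is to indicate the architecture of their proofs rather than to reproduce the details.

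\emph{Part (i).} The skeleton is the standard Coifman--Weiss duality argument, adapted to $\L$. One starts from the known identification $(H^1_{\L}(\mathbb{R}^n))^*={\rm BMO}_{\L}(\mathbb{R}^n)$ together with the atomic decomposition of $H^1_{\L}$ into atoms supported on balls $B(x,r)$, the mean-zero condition being imposed only when $r<\rho(x)$. Viewing ${\rm CMO}_{\L}$ as a closed subspace of ${\rm BMO}_{\L}$, the duality pairing embeds $H^1_{\L}$ isometrically, up to a constant, into $({\rm CMO}_{\L})^*$; injectivity holds because $C_c^\infty(\mathbb{R}^n)$ is dense in $H^1_{\L}$ and, by part (ii), lies inside ${\rm CMO}_{\L}$. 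Surjectivity follows by showing that the image of the closed unit ball of $H^1_{\L}$ is both weak-$*$ closed in $({\rm CMO}_{\L})^*$ (Banach--Alaoglu together with completeness of $H^1_{\L}$, or the Krein--Smulian theorem) and weak-$*$ dense there --- the latter because a functional on ${\rm CMO}_{\L}$ annihilated by all of $H^1_{\L}$ must vanish on the ${\rm BMO}_{\L}$-closure of $C_c^\infty$, i.e. on ${\rm CMO}_{\L}$ itself --- whence $({\rm CMO}_{\L})^*=H^1_{\L}$.

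\emph{Part (ii), easy inclusion.} Recall that ${\rm CMO}_{\L}$ is, by definition, the set of $f\in{\rm BMO}_{\L}$ with $\gamma_1(f)=\gamma_2(f)=\gamma_3(f)=0$. Each $\gamma_j$ is subadditive and bounded by $\|\cdot\|_{\widetilde{{\rm BMO}}_{\L}}\approx\|\cdot\|_{{\rm BMO}_{\L}}$, so ${\rm CMO}_{\L}$ is closed in ${\rm BMO}_{\L}$ and it suffices to check $C_c^\infty(\mathbb{R}^n)\subseteq{\rm CMO}_{\L}$. For $f\in C_c^\infty$, uniform continuity of $f$ and the bound $\|e^{-r\sqrt{\L}}f-f\|_\infty\to0$ as $r\to0$ give $\gamma_1(f)=0$; for balls of large radius or far from the origin, the compactness of $\supp f$ together with the $L^2$-contractivity and the standard pointwise (classical-Poisson-dominated) bounds for the kernel of $e^{-r\sqrt{\L}}$ force $r_B^{-n}\int_B|f-e^{-r_B\sqrt{\L}}f|^2$ to be small, so $\gamma_2(f)=\gamma_3(f)=0$. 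Hence the ${\rm BMO}_{\L}$-closure of $C_c^\infty$ is contained in ${\rm CMO}_{\L}$.

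\emph{Part (ii), hard inclusion and main obstacle.} The substantive content is to approximate an arbitrary $f\in{\rm CMO}_{\L}$ in the ${\rm BMO}_{\L}$ norm by functions in $C_c^\infty$, via a regularize-then-truncate scheme. First, $\gamma_1(f)=0$ should yield $\|e^{-t\sqrt{\L}}f-f\|_{{\rm BMO}_{\L}}\to0$ as $t\to0^+$, so one may replace $f$ by the smoother $e^{-t\sqrt{\L}}f$ and then mollify to land in $C^\infty$. Next one multiplies by a smooth cutoff $\chi_R$ adapted to $B(0,R)$ and to the critical radius, and estimates $\|(1-\chi_R)\,e^{-t\sqrt{\L}}f\|_{{\rm BMO}_{\L}}$: on balls of radius $\gtrsim R$ this is controlled by $\gamma_2$, on balls contained in $B(0,cR)^c$ by $\gamma_3$, while on small balls straddling the support of $\nabla\chi_R$ one must additionally absorb the commutator term produced by the cutoff. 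The delicate point --- and the main obstacle --- is to arrange that all these contributions tend to zero \emph{simultaneously} when $t\to0$ and $R\to\infty$ are sent to their limits in the correct order; this is precisely where the three separate vanishing conditions $\gamma_1,\gamma_2,\gamma_3$ are each indispensable. Passing to the limit then produces the desired $C_c^\infty$ approximating sequence.
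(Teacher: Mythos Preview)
The paper does not prove this theorem; it is quoted from \cite{DDSTY} and \cite{Ky} and used as a black box (for instance in the implication (a)$\Rightarrow$(b) of Theorem~C). So there is no ``paper's own proof'' to compare against, and your outline is necessarily being measured against the cited sources and against the closely related machinery the paper does develop in Section~\ref{sec:CMO-Schrodinger}.

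For part~(i), your surjectivity sketch is muddled. You assert that the image of the closed unit ball of $H^1_\L$ is ``both weak-$*$ closed \ldots\ and weak-$*$ dense'' in $({\rm CMO}_\L)^*$; a bounded set cannot be weak-$*$ dense in an unbounded space, so presumably you mean the full image is weak-$*$ dense while the unit-ball image is weak-$*$ closed. More seriously, your density criterion --- ``a functional on ${\rm CMO}_\L$ annihilated by all of $H^1_\L$ must vanish on the ${\rm BMO}_\L$-closure of $C_c^\infty$'' --- confuses duality levels: elements of $H^1_\L$ act on ${\rm CMO}_\L$, not on $({\rm CMO}_\L)^*$, so it is unclear what object is being ``annihilated'' and what is doing the vanishing. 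The argument in \cite{DDSTY} does not run this way; it passes through the tent-space duality $(T^\infty_{2,C})^*=T_2^1$ (Lemma~\ref{lem:predual-tent}(a) in this paper) together with the tent-space characterization of ${\rm CMO}_\L$ (Proposition~\ref{prop:CMO-Tent}) and the corresponding description of $H^1_\L$.

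For part~(ii), the easy inclusion is fine. For the hard inclusion, your first step --- that $\gamma_1(f)=0$ alone yields $\|e^{-t\sqrt{\L}}f-f\|_{{\rm BMO}_\L}\to 0$ --- is asserted but not obvious: the ${\rm BMO}_\L$ norm also sees large balls and balls with $r_B\ge\rho(x_B)$, so $\gamma_2$ and $\gamma_3$ (and the analogues of $\widetilde\gamma_4,\widetilde\gamma_5$) must already enter here. The truncation step likewise hides genuine difficulty because $\rho(x)$ has no uniform upper or lower bound. The paper's own approximation argument for the closely related implication (c)$\Rightarrow$(a) of Theorem~C makes this explicit: Lemma~\ref{lem:CMO-identity-approx} shows that straight mollification leaves an uncontrolled remainder (the second term in \eqref{eqn:BMO-approx-b}) precisely when $\rho(x_B)$ is small compared to the mollification scale, and the fix is a modified Uchiyama step-function construction (Lemma~\ref{lem:approx-type-II}) followed by truncation and mollification, rather than semigroup regularization. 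Your scheme may be salvageable, but as written it underestimates where the work lies.
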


\smallskip

The reader is referred to  Section~2 for  the definition of $H_{\L}^1(\mathbb{R}^n)$, the Hardy space associated to $\L$.
We say that $u\in W^{1,2}(\mathbb{R}_+^{n+1})$ is an $\mathbb{L}$-harmonic function in $\mathbb{R}_+^{n+1}$, if $u$ is a weak solution of $\mathbb{L}u:=-u_{tt}+\L u=0$, that is,
$$
    \iint_{\mathbb{R}_+^{n+1}} \partial_{t} u \partial_{t} \phi \, dx\,dt+    \iint_{\mathbb{R}_+^{n+1}}  \left\langle\nabla_{x} u, \nabla_{x} \phi\right\rangle \, dx\,dt+\iint_{\mathbb{R}_+^{n+1}}  V u \phi \, dx\,dt=0
$$
holds for all Lipschitz functions $\phi$ with compact support in $\mathbb{R}_+^{n+1}$. The space ${\rm HMO}_{\L}(\mathbb{R}_+^{n+1})$ is defined as the class of all $\mathbb{L}$-harmonic functions $u$, that satisfies
$$
   \|u\|_{{\rm HMO}_{\L}(\mathbb{R}_+^{n+1})}:=\sup_{x_B,r_B} \left( r_{B}^{-n}\int_0^{r_B} \int_{B(x_B,r_B)} |t\nabla u(x,t)|^2 \frac{dx\, dt}{t}\right)^{1/2}<\infty,
$$
where $\nabla :=(\partial_t,\nabla_x)$.

\begin{definition}
We say  that $u$ belongs to  ${\rm HCMO}_{\L}(\mathbb{R}_+^{n+1})$ if   $u\in {\rm HMO}_{\L}(\mathbb{R}_+^{n+1})$, and  satisfies the limiting conditions $\beta_1(u)=\beta_2(u)=\beta_3(u)=0$, where
\begin{align*}
    \beta_{1}(u)&= \lim _{a \rightarrow 0}\sup _{B:\, r_{B} \leq a}\left(r_B^{-n} \int_0^{r_B}\int_{B}\left|t\nabla u(x,t)\right|^{2} \frac{dx\, dt}{t}\right)^{1 / 2} ;\\
    \beta_{2}(u)&= \lim _{a \rightarrow \infty}\sup _{B: \, r_{B} \geq a}\left(r_B^{-n} \int_0^{r_B}\int_{B}\left|t\nabla u(x,t)\right|^{2} \frac{dx\, dt}{t}\right)^{1 / 2};\\
    \beta_{3}(u)&= \lim _{a \rightarrow \infty}\sup _{B: \,  B \subseteq \left(B(0, a)
    \right)^c}\left(r_B^{-n} \int_0^{r_B}\int_{B}\left|t\nabla u(x,t)\right|^{2} \frac{dx\, dt}{t}\right)^{1 / 2},
\end{align*}
We endow ${\rm HCMO}_{\L}(\mathbb{R}_+^{n+1})$ with the norm of ${\rm HMO}_{\L}(\mathbb{R}_+^{n+1})$.
\end{definition}
\smallskip

The main result of this paper  is  the following characterization.

\smallskip

\noindent{\bf Theorem A.} {\it \
Suppose $V\in {\rm RH}_q$ for some $q\geq (n+1)/2$ and  let $\L=-\Delta+V$.

\begin{itemize}
\item [(i) ] If $u\in {\rm HCMO}_{\L}(\mathbb{R}_+^{n+1})$, then there exists a function $f\in {\rm CMO}_{\L}(\mathbb{R}^n)$ such that $u(x,t)=e^{-t\sqrt{\L}}f(x)$, and  there exists a constant $C>1$, independent of $u$, such that
$$
    \|f\|_{{\rm BMO}_{\L}(\mathbb{R}^n)}\leq C \|u\|_{{\rm HMO}_{\L}(\mathbb{R}_+^{n+1})}.
$$

\smallskip

\item[(ii) ] If $f\in {\rm CMO}_{\L}(\mathbb{R}^n)$, then $u(x,t)=e^{-t\sqrt{\L}}f(x)\in {\rm HCMO}_{\L}(\mathbb{R}_+^{n+1})$, and there exists a constant $C>1$, independent of $f$, such that
$$
   \|u\|_{{\rm HMO}_{\L}(\mathbb{R}_+^{n+1})}\leq C\|f\|_{{\rm BMO}_{\L}(\mathbb{R}^n)}.
$$
\end{itemize}
}

\smallskip

Based on the previous works \cite{DYZ, JL} on this Dirichlet problem with ${\rm BMO}_{\L}(\mathbb{R}^n)$ traces, the  main difficulty of proving Theorem~A is to reveal the connections between limiting conditions equipped by solutions and traces, respectively.
In order to show (i) of Theorem~A, we will establish an equivalent characterization of the space ${\rm CMO}_{\L}(\mathbb{R}^n)$ in terms of  tent spaces.

Let $T_2^p$, $0<p\leq \infty$, be the classical tent spaces introduced by Coifman {\it et al}  in \cite{CMS1, CMS2} (see Section 2  for precise definitions). Let $T_{2, c}^{2}$ denote the set of all $f \in T_{2}^{2}$ with compact support in $\mathbb{R}_{+}^{n+1}$. Denote by	$T_{2,C}^{\infty}$ the closure of the set $T_{2, c}^{2}$ in $T_{2}^{\infty}$, and we endow $T_{2,C}^{\infty}$ with the norm of $T_{2}^{\infty}$. The following result is a special case of Proposition~3.3 in \cite{DDSTY}, by taking the operator therein to be the Schr\"odinger operator $\L$.

\begin{proposition} \label{prop:CMO-Tent}
 Suppose $V\in {\rm RH}_q$ for some $q> n/2$  and  let $\L=-\Delta+V$.
Then $f\in {\rm CMO}_{\L}$ if and only if $f\in L^2(\mathbb{R}^n, (1+|x|)^{-(n+\beta)}dx)$ for some $\beta>0$ and $t\sqrt{\L}e^{-t\sqrt{\L}} (I-e^{-t\sqrt{\L}}) f\in T_{2,C}^\infty$, with
$$
    \|f\|_{{\rm CMO}_{\L}}\approx \left\|  t\sqrt{\L}e^{-t\sqrt{\L}}\left(I-e^{-t\sqrt{\L}}\right) f \right\|_{T_2^\infty}.
$$
\end{proposition}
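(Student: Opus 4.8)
The plan is to deduce this from the already-known characterizations of ${\rm CMO}_\L$ in terms of mean oscillation together with the tent-space machinery of Coifman--Meyer--Stein. I first note that the endpoint (BMO-level) version is known: $f\in {\rm BMO}_\L(\mathbb{R}^n)$ if and only if $f\in L^2(\mathbb{R}^n,(1+|x|)^{-(n+\beta)}dx)$ for some $\beta>0$ and $t\sqrt{\L}e^{-t\sqrt{\L}}(I-e^{-t\sqrt{\L}})f\in T_2^\infty$ with comparable norms; this is the Carleson-measure characterization of $\widetilde{\rm BMO}_\L$ via \eqref{eqn:def-BMOL-norm-II} combined with standard square-function estimates for the Poisson semigroup $e^{-t\sqrt{\L}}$ (the operator $t\sqrt{\L}e^{-t\sqrt{\L}}(I-e^{-t\sqrt{\L}})$ has a kernel with the Gaussian/Poisson-type decay and cancellation needed). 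So the real content is matching the three \emph{vanishing} conditions $\gamma_1(f)=\gamma_2(f)=\gamma_3(f)=0$ from Definition~\ref{def:CMO-Schrodinger-bySong} with membership of the tent function $F(x,t):=t\sqrt{\L}e^{-t\sqrt{\L}}(I-e^{-t\sqrt{\L}})f$ in $T_{2,C}^\infty$, i.e. approximability in $T_2^\infty$-norm by elements of $T_{2,c}^2$ (compactly supported $T_2^2$ functions).

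The key steps, in order: (1) Recall/record the three geometric families of "smallness'' that characterize $T_{2,C}^\infty$: a function $G\in T_2^\infty$ lies in $T_{2,C}^\infty$ if and only if the Carleson functional $\rho(G,B):=\big(r_B^{-n}\int_0^{r_B}\int_B |G(x,t)|^2\,\frac{dx\,dt}{t}\big)^{1/2}$ satisfies $\lim_{a\to 0}\sup_{r_B\le a}\rho(G,B)=\lim_{a\to\infty}\sup_{r_B\ge a}\rho(G,B)=\lim_{a\to\infty}\sup_{B\subseteq B(0,a)^c}\rho(G,B)=0$. This is the tent-space analogue, in the three-limit form, of the classical fact that $T_{2,C}^\infty$ is the "little-oh'' version of $T_2^\infty$; it should be extracted from \cite{DDSTY} (or proved directly by truncating $G$ to $\{\varepsilon<t<1/\varepsilon\}\cap\{|x|<1/\varepsilon\}$ and estimating the error tents). (2) Show the equivalence $\gamma_i(f)=0 \iff (\text{the }i\text{-th limit of }\rho(F,\cdot)\text{ vanishes})$ for $i=1,2,3$. (3) Conclude that $f\in{\rm CMO}_\L$ (i.e. $f\in{\rm BMO}_\L$ with $\gamma_1=\gamma_2=\gamma_3=0$) is equivalent to $F\in T_2^\infty$ with the three vanishing conditions, i.e. $F\in T_{2,C}^\infty$, with norm comparability inherited from the BMO-level statement.

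The heart of the matter — and the main obstacle — is step (2), specifically controlling $\rho(F,B)$ by a quantity of the type $\sup_{B'}\big(r_{B'}^{-n}\int_{B'}|f-e^{-r_{B'}\sqrt{\L}}f|^2\big)^{1/2}$ over balls $B'$ in the \emph{same regime} as $B$ (comparable radius, or large radius, or far from the origin), and conversely. The forward direction uses that $F(x,t)=t\sqrt{\L}e^{-t\sqrt{\L}}[(I-e^{-t\sqrt{\L}})f](x)$ and that, for $t\sim r_B$, the factor $(I-e^{-t\sqrt{\L}})f$ localized near $B$ is morally $f-f_B$ up to the mean-oscillation quantity at scale $r_B$; the vertical integral $\int_0^{r_B}$ then only sees scales $t\le r_B$, so by a Littlewood--Paley/almost-orthogonality argument (the operators $t\sqrt{\L}e^{-t\sqrt{\L}}$ at scales $t\le r_B$ reproduce at most the $r_B$-scale oscillation) one bounds $\rho(F,B)$ by a constant times $\sup\{$mean-oscillation over balls of radius $\lesssim r_B$, suitably localized$\}$ plus a rapidly decaying tail that uses the weighted $L^2$ membership of $f$. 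The converse direction is the more delicate one: from smallness of $\rho(F,\cdot)$ one must recover smallness of the mean oscillation, which amounts to a lower square-function bound — one writes $(I-e^{-r_B\sqrt{\L}})f$ via the Calder\'on reproducing formula $\int_0^\infty t\sqrt{\L}e^{-t\sqrt{\L}}(\cdots)\frac{dt}{t}$ restricted appropriately and pairs it against a test function supported in $B$, using a tent-space duality $T_2^1$--$T_2^\infty$ pairing to transfer the estimate. The technical care needed for the RH$_q$ potential (the kernel bounds for $e^{-t\sqrt{\L}}$, the role of the critical radius $\rho(x)$ in separating the "small $r_B$'' versus "large $r_B$'' behavior, and the off-diagonal decay needed to handle the $\gamma_3$/$\beta_3$ "away from the origin'' limit) is where the hypothesis $q>n/2$ enters and where most of the work in \cite{DDSTY} is spent; here I would simply cite Proposition~3.3 of \cite{DDSTY} for the general operator and verify that $\L$ satisfies its standing assumptions (Davies--Gaffney/Gaussian bounds, conservation-type properties), so that the proposition follows as the stated special case.
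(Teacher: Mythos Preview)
Your proposal is correct and ultimately takes the same approach as the paper: the paper does not give an independent proof of Proposition~\ref{prop:CMO-Tent} at all but simply records it as the special case $\L=-\Delta+V$ of Proposition~3.3 in \cite{DDSTY}, which is exactly where you land in your final sentence. The intermediate outline you sketch (the three-limit characterization of $T_{2,C}^\infty$, which is Lemma~\ref{lem:predual-tent}(b) here, and the matching of $\gamma_i(f)=0$ with the corresponding tent-space vanishing) is a fair description of what the \cite{DDSTY} argument does, but it is not needed for the present paper --- a one-line citation suffices.
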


However,  the above proposition can not be used directly to  show (i) of Theorem~A. As a result, we  have to establish  a revised version of Proposition~\ref{prop:CMO-Tent}, Theorem B below.

\smallskip

\noindent{\bf Theorem B.} {\it  \
Suppose $V\in {\rm RH}_q$ for some $q> n/2$.
Then $f\in {\rm CMO}_{\L}$ if and only if  $f\in L^2(\mathbb{R}^n, (1+|x|)^{-(n+\beta)}dx)$ for some $\beta>0$ and $t\sqrt{\L}e^{-t\sqrt{\L}} f\in T_{2,C}^\infty$, with
$$
    \|f\|_{{\rm CMO}_{\L}}\approx \left\|  t\sqrt{\L}e^{-t\sqrt{\L}}f \right\|_{T_2^\infty}.
$$
}

\smallskip

We note  that analogous versions of Theorem~B have been established for second order divergence form elliptic operators  in \cite{SX,JY1}.
  The argument in the proof of Theorem B is  based on a modification of techniques in  \cite{HM, SX}.

\smallskip

On the other hand, to prove (ii) of Theorem~A, we will give another new characterization of ${\rm CMO}_{\L}(\mathbb{R}^n)$ in terms of limiting behaviors of mean oscillation.

\medskip

\noindent{\bf Theorem C.} {\it \
 Suppose $V\in {\rm RH}_q$ for some $q> n/2$  and  let $\L=-\Delta+V$. The following statements are equivalent.

\begin{itemize}
\item [(a) ] $f$ is in ${\rm CMO}_{\L}(\mathbb{R}^n)$.

\medskip

\item [(b) ] $f$  is in the closure of $C_0(\mathbb{R}^n)$ in ${\rm BMO}_{\L}(\mathbb{R}^n)$, where $C_0(\mathbb{R}^n)$ is the space of all continuous functions on $\mathbb{R}^n$ which vanish at infinity.

\medskip

\item [(c) ] $f$ is in $\mathcal{B}_{\L}$, where $\mathcal{B}_{\L}$ is the subspace of ${\rm BMO}_{\L}(\mathbb{R}^n)$ satisfying $\widetilde{\gamma}_i(f)=0$ for $1\leq i\leq 5$, where
\begin{align*}
\widetilde{\gamma}_1(f) &=\lim _{a \rightarrow 0} \sup _{B: \,r_{B} \leq a}\left(|B|^{-1} \int_{B}\left|f(x)-f_{B}\right|^{2} d x\right)^{1 / 2} ;\\
\widetilde{\gamma}_2(f) &=\lim _{a \rightarrow \infty} \sup _{B: \,r_{B} \geq a}\left(|B|^{-1} \int_{B}\left|f(x) -f_B\right|^{2} d x\right)^{1 / 2} ;\\
\widetilde{\gamma}_3(f) &=  \lim _{a \rightarrow \infty} \sup _{B: \, B \subseteq (B(0, a))^c}\left(|B|^{-1} \int_{B}\left|f(x)-f_{B}\right|^{2} d x\right)^{1 / 2};\\
\widetilde{\gamma}_4(f) &=\lim _{a \rightarrow \infty} \sup _{B: \,r_{B} \geq \max\{a,\,\rho(x_B)\}}\left(|B|^{-1} \int_{B}\left|f(x) \right|^{2} d x\right)^{1 / 2} ;\\
\widetilde{\gamma}_5(f) &=  \lim _{a \rightarrow \infty} \sup _{\substack{B: \, B \subseteq (B(0, a))^{c}\\   r_B\geq \rho(x_B)}}\left(|B|^{-1} \int_{B}\left|f(x)\right|^{2} d x\right)^{1 / 2}.
\end{align*}
Here $x_B$ denotes the center of $B$,  and  the function $\rho$ is defined in \eqref{eqn:critical-funct}.

\medskip

\item [(d)] $f$ is in ${\rm BMO}_{\L}(\mathbb{R}^n)$  and satisfies $\widetilde{\gamma}_1(f)=\widetilde{\gamma}_3(f)=\widetilde{\gamma}_5(f)=0$.
\end{itemize}
}

\medskip

Recall that Uchiyama \cite{U} proved that $f\in{\rm CMO}(\mathbb{R}^n)$ if and only if $f\in \mathcal{B}$, where  $\mathcal{B}$ is the subspace of ${\rm BMO}(\mathbb{R}^n)$ satisfying
 \begin{subequations}
\begin{equation}\label{eqn:U-CMO-a}
     \lim _{a \rightarrow 0} \sup _{B: \, r_{B} \leq a}  \frac{1}{|B|} \int_{B}\left|f(x)-f_{B}\right| d x=0;
\end{equation}
\begin{equation}\label{eqn:U-CMO-b}
      \lim _{a \rightarrow \infty} \sup _{B: \, r_{B} \geq a}\frac{1}{|B|} \int_{B}\left|f(x)-f_{B}\right| d x=0;
\end{equation}
\begin{equation}\label{eqn:U-CMO-c}
     \lim _{a \rightarrow \infty} \sup _{B: \, B \subseteq (B(0, a))^c}\frac{1}{|B|} \int_{B}\left|f(x)-f_{B}\right| d x=0.
\end{equation}
\end{subequations}
It should be pointed out that the above result was first announced by Neri \cite{Ne} without proof and  the three limiting conditions above  are  mutually independent (see  \cite[p. 49]{D} for some examples).
   Theorem~C may be seen as a generalization of the  Neri--Uchiyama theorem  from the  ${\rm CMO}_{-\Delta}$ space to the ${\rm CMO}_{\L}$ space. Indeed, when $V\equiv 0$,  the auxiliary function $\rho(x)\equiv \infty$ for each $x\in \mathbb{R}^n$,  then the last two requirements in (c) of Theorem C,  $\widetilde{\gamma}_4(f)=0$ and $\widetilde{\gamma}_5(f)=0$,  are trivial.

In the case of  $V\equiv 1$,  it is well known that ${\rm BMO}_{-\Delta+1}={\rm bmo}$,  which is  the dual of the local Hardy space $h^1$(\cite{Go1}).  The pre-dual space of $h^1$ is a local version of CMO, which can also be regarded as our  ${\rm CMO}_{-\Delta+1}(\mathbb{R}^n)$. It was proved by Dafni in \cite{D} that $f\in {\rm CMO}_{-\Delta+1}$ is equivalent to $f\in {\rm BMO}_{-\Delta+1}$ and $\widetilde{\gamma}_1(f)=\widetilde{\gamma}_5(f)=0$. However, for general $V\in {\rm RH}_q$, $q> n/2$, the situation is different. Theorem C states that $f\in {\rm CMO}_{\L}$ is equivalent to $f\in {\rm BMO}_{\L}$ and $\widetilde{\gamma}_1(f)=\widetilde{\gamma}_3(f)=\widetilde{\gamma}_5(f)=0$. Also,  We will construct an example at the end of Section 4, which satisfies $\widetilde{\gamma}_1(f)=\widetilde{\gamma}_5(f)=0$, while $\widetilde{\gamma}_3(f)\neq 0$. This implies that  $\widetilde{\gamma}_3(f)=0$ can not be deduced by $\widetilde{\gamma}_1(f)=\widetilde{\gamma}_5(f)=0$.

The main difficulty of showing Theorem~C  arises from the implicit  function $\rho(x)$ occurring in  $\widetilde{\gamma}_4(f)$ and $\widetilde{\gamma}_5(f)$. Concretely, even though it is known that $\rho(x)$ is a slowly varying function (see Lemma~\ref{lem:size-rho}),  there is no a uniformly positive bound  (from above or below) for $\rho(x)$.  For this reason,  to verify  the averaged  behaviors of functions on balls in the case of $r_B\geq \rho(x_B)$  becomes more subtle.
  For clarity, we will begin by  showing  the standard modifier is not sufficient to approximate a  given function in $\mathcal{B}_\L$ directly (see \eqref{eqn:BMO-approx-b} in Lemma~\ref{lem:CMO-identity-approx}), although such an approach   has been successfully applied in \cite[Theorem~6]{D} to character the   ${\rm CMO}_{-\Delta+1}$ space.
The difficulty will be overcome in this article by combining a  modified Uchiyama's  construction  (see Lemma~\ref{lem:approx-type-II}) and the standard modifier, relied heavily on  properties of $\rho(x)$.

\smallskip

The layout of the article is as follows. In Section~2, we recall some preliminary results, including the theory of tent spaces and the kernel estimates of the heat and Poisson semigroups of $\L$.
Section~3 is mainly devoted to show Theorem~B, by combining  ideas of \cite{HM, DDSTY, SX}.
 Our  purpose  in Section~4  is to prove Theorem~C,  based on two auxiliary estimates following from the standard approximation to the identity and  a modified Uchiyama's construction, respectively. In Section 5,   Theorem~A is proved by applying Theorem ~B and  Theorem~C.

Throughout this article, the letter ``$C$'' or ``$c$'' will denote (possibly different) constants that are independent of the essential variables.  By $A\approx B$ (resp. $A\lesssim B$), we mean that there exists a positive constant $C$ such that $C^{-1}A\leq B\leq CA$ (resp. $A\leq CB$).

\bigskip




\section{Preliminaries and auxiliary results associated to operators}\label{sec:2}
\setcounter{equation}{0}

In this section, we recall some basic definitions and properties of  tent spaces and the critical radii function $\rho(x)$.

\smallskip

\subsection{Tent Spaces}

Let $\Gamma(x)=\{(y,t)\in \mathbb{R}_+^{n+1}:\, |x-y|<t \}$ be the standard cone (of aperture 1) with vertex $x\in \mathbb{R}^n$. For any closed subset $F\subseteq \mathbb{R}^n$, $\mathcal{R}(F):=\bigcup_{x\in F}\Gamma(x)$. If $O$ is an open subset of $\mathbb{R}^n$, then the ``tent'' over $O$, denoted by $\widehat{O}$, is given as $\widehat{O}=[\mathcal{R}(O^c)]^c$.

For any function $F(y,t)$ defined on $\mathbb{R}_+^{n+1}$, we will denote
$$
    \mathcal{A}(F)(x)=\left(\iint_{\Gamma(x)} |F(y,t)|^2 \frac{dy\, dt}{t^{n+1}}\right)^{1/2}
$$
and
$$
    \mathcal{C}(F)(x)=\sup_{x\in B} \left(r_B^{-n} \iint_{\widehat{B}} |F(y,t)|^2 \frac{dy\, dt}{t} \right)^{1/2}.
$$

As in \cite{CMS2}, the tent space $T_2^p$ is defined as the space of functions $F$ such that $\mathcal{A}(F)\in L^p(\mathbb{R}^n)$ when $p<\infty$. The resulting equivalence classes are then equipped with the norm $\|F\|_{T_{2}^p}=\|\mathcal{A}(F)\|_p$. When $p=\infty$, the space $T_2^\infty$ is the class of functions $F$ for which $\mathcal{C}(F)\in L^\infty(\mathbb{R}^n)$ and the norm $\|F\|_{T_2^\infty}=\|\mathcal{C}(F)\|_\infty$. Let $T_{2, c}^{p}$ be the set of all $f \in T_{2}^{p}$ with compact support in $\mathbb{R}_{+}^{n+1}$. We denote by $T_{2,C}^{\infty}$ the closure of the set $T_{2, c}^{2}$ in $T_{2}^{\infty}$, and we endow $T_{2,C}^{\infty}$ with the norm of $T_{2}^{\infty}$.

Let $\mathcal{H}$ be the set of all $f\in T_2^\infty$ satisfying the following three conditions:
\begin{itemize}
	\item [(i)] $\displaystyle
    \eta_1(F):=\lim_{a\to 0}   \sup_{B:\, r_B\leq a} \left(r_B^{-n} \iint_{\widehat{B}} |F(y,t)|^2  \frac{dy\, dt}{t}\right)^{1/2}=0$;
	\smallskip
	\item [(ii)] $\displaystyle \eta_2(F):=\lim_{a\to +\infty}  \sup_{B:\, r_B\geq a} \left(r_B^{-n} \iint_{\widehat{B}} |F(y,t)|^2  \frac{dy\, dt}{t}\right)^{1/2}=0$;
	\smallskip
	\item [(iii)] $\displaystyle \eta_3(F):=\lim_{a\to +\infty}  \sup_{B:\, B\subseteq \left(B(0,a)\right)^c} \left(r_B^{-n} \iint_{\widehat{B}} |F(y,t)|^2  \frac{dy\, dt}{t}\right)^{1/2}=0$.
\end{itemize}
It can be verified that $\mathcal{H}$ is a closed linear subspace of $T_{2}^{\infty}$.

\begin{lemma}\label{lem:predual-tent}
 (a)  $ (T_{2,C}^{\infty})^{*}=T_2^1$, i.e., $T_2^1$ is the dual space of $T_{2,C}^\infty$.

  \smallskip
 (b)  $f \in T_{2,C}^{\infty}$ if and only if $f \in \mathcal{H}$.
\end{lemma}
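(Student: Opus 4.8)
\textbf{Proof plan for Lemma~\ref{lem:predual-tent}.}

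The plan is to treat the two parts in turn, with part (b) the substantive one. For part (a), I would follow the classical Coifman--Meyer--Stein duality scheme. The pairing $\langle F, G\rangle = \iint_{\mathbb{R}_+^{n+1}} F(y,t)\overline{G(y,t)}\,\frac{dy\,dt}{t}$ is the natural one; that it is well-defined for $F\in T_2^\infty$, $G\in T_2^1$ and bounded by $C\|F\|_{T_2^\infty}\|G\|_{T_2^1}$ is the standard tent-space pairing inequality (via the Carleson-measure estimate together with $\mathcal{A}(G)\in L^1$). This shows $T_2^1 \hookrightarrow (T_{2,C}^\infty)^*$. For the reverse inclusion, given $\Lambda \in (T_{2,C}^\infty)^*$, I would restrict $\Lambda$ to compactly supported $L^2$ functions supported in a fixed ``box'' $K\subset\mathbb{R}_+^{n+1}$; on each such box the $T_2^\infty$ norm dominates a weighted $L^2(K)$ norm, so Riesz representation produces a locally $L^2$ function $G$ with $\Lambda(F) = \langle F,G\rangle$ for all $F\in T_{2,c}^2$, and then one checks $\|G\|_{T_2^1}\lesssim \|\Lambda\|$ by testing against suitably normalized atoms/building blocks of $T_2^1$ (each supported over a tent $\widehat{B}$), exactly as in \cite{CMS2}. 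Since $T_{2,c}^2$ is dense in $T_{2,C}^\infty$ by definition, this determines $\Lambda$ and finishes (a).

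For part (b), I would prove both inclusions $T_{2,C}^\infty \subseteq \mathcal{H}$ and $\mathcal{H}\subseteq T_{2,C}^\infty$. The easy direction is $T_{2,C}^\infty\subseteq\mathcal{H}$: first observe that any $F\in T_{2,c}^2$ with support in a compact set $K\subseteq \mathbb{R}_+^{n+1}$ satisfies $\eta_1(F)=\eta_2(F)=\eta_3(F)=0$. Indeed, if $r_B$ is very small then for $x_B$ fixed the tent $\widehat{B}$ meets $K$ only when $B$ lies within a bounded region, and on that region $\iint_{\widehat B}|F|^2\frac{dy\,dt}{t}\to 0$ as $r_B\to 0$ because $t$ is bounded below on $K$ and $|\widehat B\cap K|\to 0$ (this gives $\eta_1$); if $r_B\to\infty$ or $B$ recedes to infinity, then $\widehat B\cap K$ eventually has the ``$t$ small'' part of $K$ excluded or is empty, and the $r_B^{-n}$ factor kills the contribution (this gives $\eta_2$ and $\eta_3$). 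Then, since each $\eta_i$ is subadditive and $\eta_i(F)\le \mathcal{C}(F)$-type bounds give $|\eta_i(F)-\eta_i(G)|\lesssim \|F-G\|_{T_2^\infty}$, the conditions $\eta_i=0$ pass to $T_2^\infty$-limits; hence every element of $T_{2,C}^\infty$ lies in $\mathcal{H}$. (One should also note $\mathcal{H}$ is $T_2^\infty$-closed, as already remarked in the text.)

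The harder direction, and the main obstacle, is $\mathcal{H}\subseteq T_{2,C}^\infty$: given $F\in\mathcal{H}$, construct $F_k\in T_{2,c}^2$ with $\|F-F_k\|_{T_2^\infty}\to 0$. The natural candidates are the truncations $F_k := F\cdot \mathbf{1}_{E_k}$ where $E_k = \{(y,t): 1/k < t < k,\ |y|<k\}$. That $F_k\in T_{2,c}^2$ requires $F\in T_{2,c}^2$ restricted to $E_k$, which holds since $F\in T_2^\infty\subseteq L^2_{\mathrm{loc}}(\tfrac{dy\,dt}{t})$ away from $t=0$. The crux is estimating $\|F-F_k\|_{T_2^\infty} = \sup_{B} \big(r_B^{-n}\iint_{\widehat B}|F|^2\mathbf{1}_{E_k^c}\frac{dy\,dt}{t}\big)^{1/2}$ and showing it tends to $0$. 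Here I would split the supremum over balls $B=B(x_B,r_B)$ into regimes and use the three vanishing conditions: the part of $\widehat B$ with $t\le 1/k$ is controlled by $\eta_1$ (small balls) after a standard decomposition of $\widehat B$ into dyadic subtents of small radius --- this is the delicate point, since a large tent intersected with $\{t\le 1/k\}$ must be covered by $\sim (k r_B)^n$ tents of radius $\sim 1/k$, and one needs the $r_B^{-n}$ normalization to balance the count against $\eta_1$; the part with $t\ge k$ forces $r_B\gtrsim k$ and is controlled by $\eta_2$; the part with $|y|\gtrsim k$ (and $t,r_B$ moderate) forces $B$ to be far from the origin and is controlled by $\eta_3$, while the remaining case (all of $t\in[1/k,k]$, $|y|\lesssim k$, $r_B\lesssim k$) contributes nothing since then $\widehat B\subseteq E_k$. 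Assembling these four estimates with a uniform choice shows $\|F-F_k\|_{T_2^\infty}\to 0$, i.e. $F\in T_{2,C}^\infty$. I expect the dyadic covering argument in the ``$t$ small'' regime, and bookkeeping the interaction of the three limits with the $r_B^{-n}$ weight, to be the only genuinely technical step; everything else is routine tent-space manipulation.
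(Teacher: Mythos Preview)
Your plan is correct and is the standard argument; note, however, that the paper does not actually prove this lemma itself but simply cites \cite[Theorem~1.7]{W} for (a) and \cite[Lemma~3.2]{DDSTY} for (b), so there is no ``paper's own approach'' to compare against beyond those references. What you outline is essentially the natural proof one would expect to find there.

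One small point worth tightening in your treatment of (b), hard direction: in the region $A_3=\{1/k\le t\le k,\ |y|>k\}$ you write that ``$r_B$ moderate'' forces $B$ far from the origin, hence $\eta_3$ applies. Strictly, you need a sub-split on $r_B$: if $r_B$ is large (say $r_B>\sqrt{k}$) use $\eta_2$ directly, while if $r_B\le\sqrt{k}$ and $B$ meets $\{|y|>k\}$ then $B\subseteq (B(0,k-2\sqrt{k}))^c$ and $\eta_3$ applies. With that refinement all three contributions go to zero uniformly in $B$, and the truncation argument closes. Everything else in your sketch is fine as written.
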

\begin{proof}
 (a) was proved  in \cite[Theorem~1.7]{W}.  (b) was proved in \cite[Lemma~3.2]{DDSTY}.
\end{proof}

\smallskip

\subsection{Basic properties of the critical radii function $\rho(x)$ }
In this subsection, we recall some basic properties of the critical radii function $\rho(x)$ defined in \eqref{eqn:critical-funct}, which were  first proved  by Z.W. Shen in \cite{Shen1}.

\begin{lemma}\label{V-integral size} {\rm (\cite[Lemma~1.2]{Shen1}.)}  Suppose $V\in {\rm RH}_q$ for $q>1$. There exists $C>0$ such that, for $0<r<R<\infty$,
$$
\frac{1}{r^{n-2}} \int_{B(x,r)} V(y) \, dy\leq C \left(\frac{R}{r}\right)^{\frac{n}{q}-2} \frac{1}{R^{n-2}} \int_{B(x,R)} V(y) \, dy.
$$
\end{lemma}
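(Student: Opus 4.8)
The plan is to deduce this directly from the reverse H\"older inequality \eqref{eqn:Reverse-Holder} applied on the larger ball $B(x,R)$, combined with H\"older's inequality on the smaller ball $B(x,r)$. The mechanism is simple: since $B(x,r)\subseteq B(x,R)$ when $0<r<R$, the $L^q$-mass of $V$ over the small ball is dominated by that over the large ball, and the latter is in turn controlled by the $L^1$-average of $V$ over $B(x,R)$ via \eqref{eqn:Reverse-Holder}; tracking the resulting powers of $r$ and $R$ gives the claim.

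Concretely, I would first apply H\"older's inequality with exponents $q$ and $q'=q/(q-1)$ on $B(x,r)$, to get
$$
\int_{B(x,r)} V(y)\, dy \leq |B(x,r)|^{1-1/q}\left(\int_{B(x,r)} V(y)^{q}\, dy\right)^{1/q}.
$$
Because $V^{q}\geq 0$ and $B(x,r)\subseteq B(x,R)$, the last integral is at most $\int_{B(x,R)} V(y)^{q}\, dy$. Then, applying \eqref{eqn:Reverse-Holder} on $B(x,R)$ yields
$$
\left(\int_{B(x,R)} V(y)^{q}\, dy\right)^{1/q} \leq C\,|B(x,R)|^{1/q-1}\int_{B(x,R)} V(y)\, dy.
$$
Combining the two displays and using $|B(x,\sigma)|=c_n\sigma^{n}$ gives
$$
\int_{B(x,r)} V(y)\, dy \leq C\,|B(x,r)|^{1-1/q}\,|B(x,R)|^{1/q-1}\int_{B(x,R)} V(y)\, dy = C\left(\frac{r}{R}\right)^{n-n/q}\int_{B(x,R)} V(y)\, dy.
$$

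Finally, I would divide both sides by $r^{n-2}$ and rearrange the powers: since
$$
r^{2-n}\left(\frac{r}{R}\right)^{n-n/q} = r^{2-n/q}R^{n/q-n} = \left(\frac{R}{r}\right)^{n/q-2}\frac{1}{R^{n-2}},
$$
the asserted inequality follows with the same constant $C$. The argument is entirely elementary; there is no real obstacle, and the only point needing a little care is the exponent bookkeeping together with the observation that the hypothesis $r<R$ is exactly what legitimizes enlarging the domain of integration of $V^{q}$ from $B(x,r)$ to $B(x,R)$.
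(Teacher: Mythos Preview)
Your argument is correct and is the standard proof of this fact; the paper itself does not supply a proof but simply cites \cite[Lemma~1.2]{Shen1}, where the argument is the same H\"older--then--reverse-H\"older chain you wrote down.
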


\smallskip

\begin{lemma} \label{lem:size-rho}
{\rm ( \cite[Lemma~1.4]{Shen1}.)}\
Suppose $V\in {\rm RH}_q$ for some $q> n/2$. There exist $c>1$ and $k_0\geq1$ such that for all $x,y\in\mathbb{R}^n$,
\begin{equation}\label{eqn:size-rho}
c^{-1}\left(1+\frac{\abs{x-y}}{\rho(x)}\right)^{-k_0} \rho(x)\leq\rho(y)\leq c
\left(1+\frac{\abs{x-y}}{\rho(x)}\right)^{\frac{k_0}{k_0+1}} \rho(x).
\end{equation}
In particular, $\rho(x)\approx \rho(y)$ when $y\in B(x, r)$ and $r\lesssim\rho(x)$.
\end{lemma}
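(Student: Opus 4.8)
The plan is to argue entirely in terms of the quantity $m(x,r):=r^{2-n}\int_{B(x,r)}V$, so that $\rho(x)=\sup\{r>0:m(x,r)\le1\}$. I would begin by recording the elementary properties of $m$. For fixed $x$ it is continuous in $r$; writing $\varepsilon:=2-n/q>0$, Lemma~\ref{V-integral size} reads $m(x,r)\le C_0(r/R)^{\varepsilon}m(x,R)$ for $0<r<R$, which forces $m(x,r)\to0$ as $r\to0$ and (since $V\not\equiv0$) $m(x,r)\to\infty$ as $r\to\infty$. Hence $\rho(x)\in(0,\infty)$, $m(x,\rho(x))=1$, and $m(x,r)>1$ for every $r>\rho(x)$. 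Two further consequences of Lemma~\ref{V-integral size} will be used repeatedly: (a) $m(x,R)\ge C_0^{-1}(R/\rho(x))^{\varepsilon}$ whenever $R\ge\rho(x)$ (take $r=\rho(x)$); and (b) if $m(y,r)>C_0$ then $\rho(y)\le r$, because otherwise some $r'>r$ would satisfy $m(y,r')\le1$, whence $m(y,r)\le C_0(r/r')^{\varepsilon}m(y,r')\le C_0$. Finally I would invoke the well-known fact that $V\in{\rm RH}_q$ makes $V\,dx$ a doubling measure; iterating the doubling inequality and using $m(x,\rho(x))=1$ gives $\int_{B(x,R)}V\le C(R/\rho(x))^{\beta}\rho(x)^{n-2}$ for all $R\ge\rho(x)$ and some $\beta>0$, and comparing this with (a) forces $\beta-n+2\ge\varepsilon$.

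The core of the proof is a single estimate: the lower bound for $\rho(y)$. Put $d=|x-y|$. If $d\le\rho(x)$, then $B(y,\rho(x))\subseteq B(x,2\rho(x))$, so by doubling $m(y,\rho(x))\le\rho(x)^{2-n}\int_{B(x,2\rho(x))}V\le C$; feeding this into Lemma~\ref{V-integral size} with centre $y$ gives $m(y,r)\le C(r/\rho(x))^{\varepsilon}$ for $r<\rho(x)$, hence $m(y,r)\le1$ for $r\le c\rho(x)$, i.e.\ $\rho(y)\ge c\rho(x)$. If $d>\rho(x)$, then $B(y,d)\subseteq B(x,2d)$ and the doubling estimate gives $m(y,d)\le C(d/\rho(x))^{\beta-n+2}$; again Lemma~\ref{V-integral size} with centre $y$ yields $m(y,r)\le C(r/d)^{\varepsilon}(d/\rho(x))^{\beta-n+2}$ for $r<d$, so $m(y,r)\le1$ once $r\le c\,d\,(d/\rho(x))^{-(\beta-n+2)/\varepsilon}$. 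Simplifying the powers of $d$ and $\rho(x)$ and setting $k_0:=\max\{1,\,(\beta-n+2)/\varepsilon-1\}\ge1$, one reads off, in both cases, that $\rho(y)\ge c\,(1+d/\rho(x))^{-k_0}\rho(x)$.

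The upper bound then follows from the lower bound by interchanging $x$ and $y$: the inequality just proved gives $\rho(x)\ge c(1+d/\rho(y))^{-k_0}\rho(y)$, that is $\rho(y)\le c^{-1}(1+d/\rho(y))^{k_0}\rho(x)$. If $d\le\rho(y)$ the prefactor is at most $2^{k_0}$ and we are done; if $d>\rho(y)$ then $1+d/\rho(y)<2d/\rho(y)$, the inequality becomes $\rho(y)^{k_0+1}\le C\,d^{k_0}\rho(x)$, and this solves to $\rho(y)\le C\,(d/\rho(x))^{k_0/(k_0+1)}\rho(x)\le C\,(1+d/\rho(x))^{k_0/(k_0+1)}\rho(x)$. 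This proves the two displayed inequalities. For the last assertion: if $y\in B(x,r)$ with $r\lesssim\rho(x)$ then $d=|x-y|\lesssim\rho(x)$, so $1+d/\rho(x)\approx1$ and both bounds give $\rho(y)\approx\rho(x)$.

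I expect the main obstacle to be psychological rather than technical: it is tempting to attack the upper bound on $\rho(y)$ directly, but transferring the abundance of $V$ near $x$ over to $y$ only yields $\rho(y)\lesssim|x-y|$, which is strictly weaker than what is claimed. The right move is to prove \emph{only} the lower bound — via the doubling transfer of $V$-mass combined with the scaling in Lemma~\ref{V-integral size} — and then extract the upper bound purely algebraically. The remaining care lies in the exponent bookkeeping (making a single $k_0\ge1$ serve both inequalities, which is why $\beta-n+2\ge\varepsilon$ is recorded) and in the standard but non-trivial input that ${\rm RH}_q$ potentials are doubling.
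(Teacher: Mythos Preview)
The paper does not give its own proof of this lemma: it is simply quoted as \cite[Lemma~1.4]{Shen1} and used as a black box throughout. Your proposal is therefore not being compared against anything in the present paper, but it is a correct reconstruction of the standard argument (essentially Shen's): transfer $V$-mass from $x$ to $y$ via the doubling property, combine with the scaling inequality of Lemma~\ref{V-integral size} to get the lower bound on $\rho(y)$, and then recover the upper bound by the $x\leftrightarrow y$ symmetry and solving the resulting implicit inequality for $\rho(y)$. The bookkeeping on the exponents (in particular the observation $\beta-n+2\ge\varepsilon$ so that $k_0\ge1$) is handled correctly, and the final ``in particular'' is the trivial consequence you state.
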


Noting that $\rho(x)>0$ for each $x\in \mathbb{R}^n$, Lemma~\ref{lem:size-rho} implies that the implicit function $\rho$ is locally bounded from above and below.   This fact will be used frequently in the sequel.

\smallskip

\subsection{Basic properties of the heat and Poisson semigroups of Schr\"odinger operators}

Let $\left\{e^{-t\L}\right\}_{t>0}$ be the heat  semigroup associated to $\L$:
\begin{equation}\label{eqn:heat-semigroup}
 e^{-t\L}f(x)=\int_{\mathbb{R}^n}{\mathcal K}_t(x,y)f(y)~dy,\qquad f\in L^2(\mathbb{R}^n),\  x\in\mathbb{R}^n,\   t>0.
\end{equation}

\begin{lemma}\label{lem:heat-Schrodinger}
{\rm (see \cite{DGMTZ}.)} ~~~
Suppose $V\in {\rm RH}_q$ for some $q> n/2$. For every $N>0$ there exist constants $C_N$ and $c$ such that for $ x,y\in\mathbb{R}^n, t >0$,
\smallskip
 \begin{itemize}
\item[(i)]
$\displaystyle  0\leq {\mathcal K}_t(x,y)\leq C_N\,t^{-n/2}\exp\left(-\frac{|x-y|^2}{ct}\right)\left(1+\frac{\sqrt{t}}{\rho(x)} +\frac{\sqrt{t}}{\rho(y)}\right)^{-N}  $;

\smallskip

\item[(ii)]
$\displaystyle  \left|\partial_t{\mathcal K}_t(x,y)\right|\leq C_N \, t^{-\frac{n+2}{2}}\exp\left(- \frac{|x-y|^2}{ct}\right)  \left(1+\frac{\sqrt t}{\rho(x)}+\frac{\sqrt t}{\rho(y)}\right)^{-N} $.
 \end{itemize}
\end{lemma}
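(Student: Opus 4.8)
The plan is to establish (i) in two stages — first a bare Gaussian upper bound, then the extra decay in $\rho$ — and to deduce (ii) from (i) by analyticity of the semigroup. For the bare bound $0\leq\mathcal{K}_t(x,y)\leq C\,t^{-n/2}e^{-|x-y|^2/ct}$ I would use only that $V\geq 0$. The Trotter product formula $e^{-t\L}=\lim_{m\to\infty}\big(e^{(t/m)\Delta}e^{-(t/m)V}\big)^m$ exhibits $e^{-t\L}$ as a strong limit of products of positive contractions, and the kernel of $e^{(t/m)\Delta}e^{-(t/m)V}$ is $h_{t/m}(x,y)e^{-(t/m)V(y)}\leq h_{t/m}(x,y)$, where $h_s(x,y)=(4\pi s)^{-n/2}e^{-|x-y|^2/4s}$; composing $m$ such factors and using the semigroup law for $h$ gives $0\leq\mathcal{K}_t\leq h_t$ after passing to the limit. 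Equivalently, the Feynman--Kac identity $\mathcal{K}_t(x,y)=h_t(x,y)\,\mathbb{E}^{t}_{x,y}\!\big[\exp(-\int_0^t V(\omega(s))\,ds)\big]$ over the Brownian bridge yields the same bound, the bracket lying in $[0,1]$.

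The substantial step is upgrading the trivial factor $1$ to $(1+\sqrt t/\rho(x)+\sqrt t/\rho(y))^{-N}$, and this is where $V\in{\rm RH}_q$, $q>n/2$, enters. The analytic input is that $V$ has ``size $\rho(z)^{-2}$ at scale $\rho(z)$'': from the definition of $\rho$ together with the self-improvement of the reverse H\"older inequality (Lemma~\ref{V-integral size}) and the ensuing doubling of $V\,dx$, one gets $\frac{1}{r^{n-2}}\int_{B(z,r)}V\approx 1$ for $r\approx\rho(z)$, and this quantity grows like a fixed positive power of $r/\rho(z)$ when $r>\rho(z)$. From here two routes work. Probabilistically, one refines the Feynman--Kac bracket: if $\sqrt t\geq\rho(x)$, then on $[0,t]$ a Brownian path from $x$ typically crosses of order $\sqrt t/\rho(x)$ disjoint critical balls, picking up on each — with probability bounded below, uniformly in the ball — a fixed amount of $\int V(\omega(s))\,ds$; a large-deviation estimate for this additive functional, uniform in the endpoints, gives $\mathbb{E}^{t}_{x,y}[\exp(-\int_0^t V)]\lesssim_N(1+\sqrt t/\rho(x))^{-N}$, and symmetrically in $\rho(y)$. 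Analytically, one first proves the integrated bound $e^{-t\L}\mathbf 1(x)=\int_{\mathbb R^n}\mathcal{K}_t(x,y)\,dy\leq C_N(1+\sqrt t/\rho(x))^{-N}$ by a bootstrap on $N$ — writing $e^{-t\L}\mathbf 1=e^{-(t/2)\L}\big[e^{-(t/2)\L}\mathbf 1\big]$, inserting $\mathcal{K}_{t/2}\leq h_{t/2}$ and the inductive hypothesis, and splitting the $z$-integral into $\{|z-x|\lesssim\sqrt t\}$ (where the gain comes from $\sqrt t/\rho(z)\gtrsim(\sqrt t/\rho(x))^{1/(k_0+1)}$, a consequence of the slow variation of $\rho$, Lemma~\ref{lem:size-rho}) and its complement (absorbed by the Gaussian tail) — and then transfers this to the pointwise kernel bound by a standard localization of $e^{-t\L}$ near $x$ and near $y$. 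Either way, combining with the first stage yields (i), and the two one-sided $\rho$-factors fuse into $(1+\sqrt t/\rho(x)+\sqrt t/\rho(y))^{-N}$ since $(1+a)(1+b)\geq 1+a+b$.

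For (ii) I would invoke analyticity. The semigroup $\{e^{-t\L}\}_{t>0}$ extends to a bounded analytic semigroup on $L^2(\mathbb R^n)$ in a sector about the positive axis, and since the argument of the second stage only used that the time has modulus comparable to its real part, the Gaussian-with-$\rho$-decay bound persists for the complex-time kernel $\mathcal{K}_z(x,y)$ when $|\arg z|\leq\pi/4$, say. Cauchy's formula on the circle $|z-t|=t/2$,
$$
\partial_t\mathcal{K}_t(x,y)=\frac{1}{2\pi i}\oint_{|z-t|=t/2}\frac{\mathcal{K}_z(x,y)}{(z-t)^2}\,dz,
$$
together with ${\rm Re}\,z\geq t/2$ on that circle, then produces the extra factor $t^{-1}$ and gives (ii); this is exactly the standard passage from a Gaussian bound on a self-adjoint semigroup to a Gaussian bound on its time derivative.

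The one genuine obstacle is the second stage: converting ``$\sqrt t\gg\rho(x)$'' into real exponential gain from the potential — that is, the uniform large-deviation bound for $\int_0^t V(\omega(s))\,ds$, or equivalently the bootstrap $N\mapsto N+\varepsilon$ for $e^{-t\L}\mathbf 1$ — which is precisely where the interaction between the Gaussian decay and the polynomial $\rho$-weight, controlled through Lemmas~\ref{V-integral size} and~\ref{lem:size-rho}, must be handled with care. The bare Gaussian bound and the analyticity argument for (ii) are routine.
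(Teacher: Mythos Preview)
The paper does not prove this lemma; it is stated with a bare citation to \cite{DGMTZ} and used as a black box. There is therefore no ``paper's own proof'' to compare against, and your proposal is not so much an alternative as an attempt to fill in what the paper outsources.

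That said, your outline is essentially the route taken in the literature the paper cites (Dziuba\'nski--Garrig\'os--Mart\'inez--Torrea--Zienkiewicz, and earlier Kurata and Shen). The domination $0\leq\mathcal K_t\leq h_t$ via Trotter/Feynman--Kac is standard, and the passage from (i) to (ii) by analytic extension and Cauchy's integral formula is exactly the mechanism used there. The one place your sketch is genuinely incomplete is the second stage, and you flag it yourself: the bootstrap for $e^{-t\mathcal L}\mathbf 1(x)\lesssim_N(1+\sqrt t/\rho(x))^{-N}$ and its transfer to the pointwise kernel. Your localization idea for the transfer is sound --- split $\mathcal K_{2t}(x,y)=\int\mathcal K_t(x,z)\mathcal K_t(z,y)\,dz$ according to whether $z$ is closer to $x$ or to $y$, use the bare Gaussian on the far factor and the integrated bound on the near one --- but the inductive step $N\mapsto N+\varepsilon$ for the integrated bound, as you have written it, does not quite close: inserting only $\mathcal K_{t/2}\leq h_{t/2}$ on one factor forfeits the gain already accumulated, so one must instead iterate the full kernel bound (Gaussian times $\rho$-weight) at each step, and it is the interplay of the Gaussian localization with Lemma~\ref{lem:size-rho} that yields the fixed increment in the exponent. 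The original references carry this out via a parabolic comparison argument (subsolution estimate) rather than a pure semigroup iteration, which is cleaner; your probabilistic alternative is also viable but requires a quantitative occupation-time estimate for Brownian motion in critical balls that is itself nontrivial. None of this is wrong, but if you want a self-contained proof you should follow the comparison-principle argument in \cite{DGMTZ} for the integrated bound rather than the bootstrap you sketch.
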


Denote by $h_t(x)$
 the kernel of the classical heat semigroup $\big\{e^{t\Delta}\big\}_{t>0}$ on $\mathbb{R}^n$. Then  the following result is valid.

\begin{lemma} \label{lem:Diff-heat-kernel}
{\rm (See \cite[Proposition~2.16]{DZ2}.)}~~~
Suppose $V\in {\rm RH}_q$ for some $q> n/2.$ There exists a nonnegative Schwartz  function $\varphi$ on $\Real^n$ such that
\begin{equation*}
\big| h_t(x-y)-{\mathcal K}_t(x,y) \big|\leq\left(\frac{\sqrt{t}}{\rho(x)}\right)^{2-n/q}\varphi_t(x-y),\quad x,y\in\Real^n,~t>0,
\end{equation*}
where $\varphi_t(x)=t^{-n/2}\varphi\left(x/\sqrt{t}\right)$.
\end{lemma}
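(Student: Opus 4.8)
\textbf{Proof plan for Lemma~\ref{lem:Diff-heat-kernel}.}
The plan is to compare $\mathcal{K}_t$ with $h_t$ via the Duhamel (perturbation) formula and then control the error term using the local integrability of $V$ supplied by the reverse H\"older condition. First I would write the perturbation identity
\[
  h_t(x-y)-\mathcal{K}_t(x,y)=\int_0^t\!\!\int_{\mathbb{R}^n} h_{t-s}(x-z)\,V(z)\,\mathcal{K}_s(z,y)\,dz\,ds,
\]
which is valid because $e^{t\Delta}$ and $e^{-t\L}$ share the same initial data and $\partial_s\big(h_{t-s}*\mathcal{K}_s\big)=h_{t-s}*(-\Delta+\L)\mathcal{K}_s=h_{t-s}*V\mathcal{K}_s$ in the appropriate weak sense. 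Then I would insert the Gaussian bound $0\le\mathcal{K}_s(z,y)\le C s^{-n/2}\exp(-|z-y|^2/cs)$ from Lemma~\ref{lem:heat-Schrodinger}(i) (discarding the $\rho$-factor, which is $\le 1$) together with the classical Gaussian bound for $h_{t-s}$, so that the problem reduces to estimating
\[
  \int_0^t\!\!\int_{\mathbb{R}^n}(t-s)^{-n/2}e^{-\frac{|x-z|^2}{c(t-s)}}\,V(z)\,s^{-n/2}e^{-\frac{|z-y|^2}{cs}}\,dz\,ds.
\]

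The core of the argument is a quantitative estimate on averages of $V$ against Gaussians, which is where the exponent $2-n/q$ enters. Using ${\rm RH}_q$ one shows that
\[
  \int_{\mathbb{R}^n}V(z)\,\tau^{-n/2}e^{-\frac{|x-z|^2}{c\tau}}\,dz\ \lesssim\ \tau^{-1}\Big(\tfrac{\sqrt\tau}{\rho(x)}\Big)^{2-n/q}\quad\text{for }\ \sqrt\tau\lesssim\rho(x),
\]
and a comparable bound (with an extra polynomially decaying factor in $|x-z|/\sqrt\tau$ that can be absorbed into a Schwartz profile) in general; this is essentially the computation behind Shen's estimates, decomposing $\mathbb{R}^n$ into the ball $B(x,\sqrt\tau)$ and dyadic annuli, applying H\"older with exponent $q$ on each piece, invoking the reverse H\"older inequality \eqref{eqn:Reverse-Holder}, and then summing using Lemma~\ref{V-integral size} together with the defining property $\frac{1}{\rho(x)^{n-2}}\int_{B(x,\rho(x))}V=1$. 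I would then split the time integral at $s=t/2$: on $[0,t/2]$ the factor $(t-s)^{-n/2}e^{-|x-z|^2/c(t-s)}$ is comparable to $h_{t/2}(x-z)$ and integrates the $V$-Gaussian against the smooth heat kernel; on $[t/2,t]$ one symmetrically uses the $s$-Gaussian. Tracking the powers of $s$, $t-s$, and $t$, the $ds$ integral converges precisely because $n/q<2$ (i.e.\ $q>n/2$), yielding the claimed bound $\big(\sqrt t/\rho(x)\big)^{2-n/q}\varphi_t(x-y)$ after identifying the spatial profile $\varphi_t$ as a fixed dilate of a Schwartz function (the convolution of two Gaussians, times polynomial corrections, is dominated by a Gaussian, hence by such a $\varphi_t$).

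The main obstacle I anticipate is twofold. First, making the Gaussian-average estimate for $V$ uniform in the regime $\sqrt t\gtrsim\rho(x)$: there the naive bound loses the smallness, and one must use the slow variation of $\rho$ from Lemma~\ref{lem:size-rho} to trade off growth in $\big(\sqrt t/\rho(x)\big)$ against the Gaussian decay, so that the final spatial factor is still a genuine Schwartz function $\varphi_t(x-y)$ rather than merely a Gaussian with a $\rho$-dependent constant; this forces one to choose the decay profile $\varphi$ with enough room to absorb a polynomial factor $(1+|x-y|/\sqrt t)^{M}$ coming from the annular decomposition. Second, the time integral $\int_0^t s^{-a}(t-s)^{-b}\,ds$ with $a+b$ close to $1$ must be handled carefully to extract exactly the homogeneity $t^{-n/2}\cdot(\sqrt t/\rho(x))^{2-n/q}$ and no worse; keeping the exponents bookkept correctly, and verifying that no logarithmic loss occurs at the borderline, is the delicate part of the estimate. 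Once these are in place, the asymmetry of the statement (the error is expressed in terms of $\rho(x)$ only, not $\rho(y)$) is a cosmetic consequence of having discarded the $\rho(y)$ factor in $\mathcal{K}_s$ and of the symmetry $\rho(x)\approx\rho(y)$ when $|x-y|\lesssim\rho(x)$, with the complementary range absorbed by Gaussian decay.
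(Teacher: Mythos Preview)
The paper does not prove this lemma at all: it is quoted verbatim as Proposition~2.16 of \cite{DZ2} (Dziuba\'nski--Zienkiewicz) and used as a black box, so there is no ``paper's own proof'' to compare your proposal against.

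That said, your outline is the standard route to this estimate and is essentially what one finds in \cite{DZ2}: the Duhamel identity $h_t-\mathcal{K}_t=\int_0^t h_{t-s}*V\mathcal{K}_s\,ds$, Gaussian upper bounds on both kernels, the reverse H\"older/annular decomposition giving $\int V(z)\tau^{-n/2}e^{-|x-z|^2/c\tau}\,dz\lesssim \tau^{-1}(\sqrt\tau/\rho(x))^{2-n/q}$, and the symmetric time-splitting at $s=t/2$. The two ``obstacles'' you flag are real but manageable: the regime $\sqrt t\gtrsim\rho(x)$ is not a problem because then $(\sqrt t/\rho(x))^{2-n/q}\ge 1$ and the crude Gaussian bound already gives the inequality (you do not need smallness there), so no delicate use of Lemma~\ref{lem:size-rho} is required; and the Beta-type integral $\int_0^t s^{-n/(2q)}(t-s)^{-n/2}\,ds$ behaves well precisely because $q>n/2$, with no borderline logarithm. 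So your plan would succeed, but for the purposes of this paper a citation suffices.
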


 The Poisson semigroup associated to $\L$ can be obtained from the heat semigroup  through Bochner's subordination formula:
\begin{equation}\label{e2.6}
    e^{-t\sqrt{\L}}f(x)=\frac{1}{\sqrt{\pi}}\int_0^\infty\frac{e^{-u}}{\sqrt{u}}~e^{-{t^2\over 4u}\L}f(x)~du.
\end{equation}

From \eqref{e2.6},  the semigroup kernels ${\mathcal P}_t(x,y)$, associated to $e^{-t\sqrt{\L}}$, satisfy the following estimates.

\begin{lemma} \label{lem:Poisson-kernels}{\rm (\cite[Proposition 3.6]{MSTZ})}
Suppose $V\in {\rm RH}_q$ for some $q> n/2.$ For any $0<\delta<\min\left\{1, 2-\frac{n}{q}\right\}$ and every $N>0$, there exists a constant $C=C_{N}$ such that
 \begin{itemize}

\item[(i)]
 $\displaystyle
\left| {\mathcal P}_t(x,y)\right|\leq C{t \over (t^2+|x-y|^2)^{n+1\over 2}} \left(1+ {(t^2+|x-y|^2)^{1/2}\over \rho(x)} +{(t^2+|x-y|^2)^{1/2}\over \rho(y)}\right)^{-N}$;

\medskip

\item[(ii)]
For every $m\in \mathbb{N}=\{1,2,3,\cdots\}$,
$$
\left|t^m\partial^m_t{\mathcal P}_t(x,y) \right|\leq C {t^m \over (t^2+|x-y|^2)^{n+m\over 2}} \left(1+ {(t^2+|x-y|^2)^{1/2}\over \rho(x)}
+{(t^2+|x-y|^2)^{1/2}\over \rho(y)}\right)^{-N};
$$

\medskip

\item[(iii)]  For every $m\in \mathbb{N}$,
$$ \left|  t^m \partial_t^m e^{-t\sqrt{\L}}(1)(x)\right|\leq C\left(\frac{t}{ \rho(x)}\right)^{\delta}\left(1+{t\over \rho(x)}\right)^{-N}.
$$
 \end{itemize}
\end{lemma}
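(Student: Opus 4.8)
The plan is to deduce (i)--(iii) from the heat kernel bounds of Lemma~\ref{lem:heat-Schrodinger} and the heat kernel comparison of Lemma~\ref{lem:Diff-heat-kernel}, transported to the Poisson semigroup through the subordination formula \eqref{e2.6}. Substituting $u=t^{2}/(4v)$ in \eqref{e2.6} turns it into the pointwise identities
$$\mathcal{P}_t(x,y)=\int_0^\infty p_t(v)\,\mathcal{K}_v(x,y)\,dv,\qquad e^{-t\sqrt{\L}}(1)(x)=\int_0^\infty p_t(v)\,e^{-v\L}(1)(x)\,dv,$$
where $p_t(v):=\tfrac{t}{2\sqrt\pi}\,v^{-3/2}e^{-t^2/(4v)}$ and $\int_0^\infty p_t(v)\,dv=1$. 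The one structural fact I would isolate first, using $\partial_t^{k}e^{-t^2/(4v)}=(2v)^{-k/2}(-1)^{k}He_{k}(t/\sqrt{2v})e^{-t^2/(4v)}$ with $He_k$ the Hermite polynomials, is that
$$t^{m}\partial_t^{m}p_t(v)=\frac{t}{v^{3/2}}\,Q_m\!\Big(\frac{t^2}{v}\Big)e^{-t^2/(4v)},$$
where $Q_m$ is a polynomial of degree $m$ vanishing at the origin to order exactly $\ell:=\lfloor m/2\rfloor$ (visible from the parity of $s^{m}He_{m+1}(s)$, or by a short induction). In particular $\bigl|t^{m}\partial_t^{m}p_t(v)\bigr|\lesssim t^{2\ell+1}v^{-\ell-3/2}e^{-t^2/(8v)}$ for all $m\ge0$.

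For (i) and (ii) I would insert the bound of Lemma~\ref{lem:heat-Schrodinger}(i) for $\mathcal{K}_v$ into the differentiated identity (differentiation under the integral being licensed by Lemma~\ref{lem:heat-Schrodinger}(ii) together with the exponential factors), obtaining, with $r=|x-y|$,
$$\bigl|t^{m}\partial_t^{m}\mathcal{P}_t(x,y)\bigr|\lesssim\int_0^\infty \frac{t^{2\ell+1}}{v^{\ell+(n+3)/2}}\,e^{-(t^2+r^2)/(Cv)}\Bigl(1+\tfrac{\sqrt v}{\rho(x)}+\tfrac{\sqrt v}{\rho(y)}\Bigr)^{-N'}dv.$$
Two routine manipulations finish it. First, spending a fraction of the Gaussian and using the elementary inequality $(1+\tfrac{\sqrt v}{\rho})(1+\tfrac{a}{\sqrt v})\ge 1+\tfrac{a}{\rho}$ with $a=\max\{t,r\}$, together with $(1+\tfrac a{\sqrt v})^{N}e^{-a^2/(Cv)}\lesssim 1$, one replaces the $\rho$-factor by $\big(1+\tfrac{(t^2+r^2)^{1/2}}{\rho(x)}\big)^{-N}\big(1+\tfrac{(t^2+r^2)^{1/2}}{\rho(y)}\big)^{-N}$. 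Second, the remaining radial integral is evaluated by $\int_0^\infty v^{-a}e^{-b/v}\,dv=\Gamma(a-1)\,b^{1-a}$ with $b\approx t^2+r^2$, producing $t^{2\ell+1}(t^2+r^2)^{-(n+2\ell+1)/2}$. Since $m-2\lfloor m/2\rfloor-1\in\{-1,0\}$ and $(t^2+r^2)^{1/2}\ge t$, this is $\le t^{m}(t^2+r^2)^{-(n+m)/2}$, which is (ii); taking $m=0$ gives (i).

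For (iii) fix $m\ge1$. Because $\int_0^\infty t^m\partial_t^m p_t(v)\,dv=\partial_t^m(1)=0$, the same identity can be written in two ways:
$$t^m\partial_t^m e^{-t\sqrt{\L}}(1)(x)=\int_0^\infty t^m\partial_t^m p_t(v)\,e^{-v\L}(1)(x)\,dv=\int_0^\infty t^m\partial_t^m p_t(v)\bigl(e^{-v\L}(1)(x)-1\bigr)\,dv.$$
Integrating Lemma~\ref{lem:Diff-heat-kernel} in $y$ gives $0\le 1-e^{-v\L}(1)(x)\lesssim\min\{1,(\sqrt v/\rho(x))^{2-n/q}\}\le(\sqrt v/\rho(x))^{\delta}$, and integrating Lemma~\ref{lem:heat-Schrodinger}(i) in $y$ gives $e^{-v\L}(1)(x)\lesssim(1+\sqrt v/\rho(x))^{-N}$. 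When $t\le\rho(x)$ I would use the second representation with the first bound; since $\delta<1$ the radial integral $\rho(x)^{-\delta}\int_0^\infty t\,v^{-(3-\delta)/2}e^{-t^2/(8v)}\,dv$ converges and is a constant times $(t/\rho(x))^{\delta}$. When $t>\rho(x)$ I would use the first representation with the second bound; splitting off the $\rho$-factor as above and using $|t^m\partial_t^m p_t(v)|\lesssim \tfrac t{v^{3/2}}e^{-t^2/(8v)}$ gives a constant times $(1+t/\rho(x))^{-N}$. In each range one of the two factors of $(t/\rho(x))^{\delta}(1+t/\rho(x))^{-N}$ is comparable to $1$, so both regimes yield that common bound.

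The step I expect to be the real obstacle is the sharp spatial decay in (ii). The effortless choice $|t^m\partial_t^m p_t(v)|\lesssim \tfrac t{v^{3/2}}e^{-ct^2/v}$ only delivers $t(t^2+|x-y|^2)^{-(n+1)/2}$, which is strictly weaker than the claimed $t^m(t^2+|x-y|^2)^{-(n+m)/2}$ for $m\ge2$; the missing decay sits precisely in the order-$\lfloor m/2\rfloor$ vanishing of $Q_m$ at the origin (equivalently, in how fast $t^m\partial_t^m p_t(v)$ decays as $v\to\infty$). So the one genuinely non-mechanical task is to pin down that vanishing order and then keep track of the $\rho$-factors through the radial integration so that they end up with the correct argument $(t^2+|x-y|^2)^{1/2}$ rather than $t$.
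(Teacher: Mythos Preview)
The paper does not prove this lemma; it simply quotes it from \cite[Proposition~3.6]{MSTZ} and moves on. So there is no in-paper argument to compare against. Your proposal supplies a complete subordination proof, and it is essentially the standard route (and, in outline, the one used in \cite{MSTZ}): pass from the heat kernel bounds of Lemmas~\ref{lem:heat-Schrodinger}--\ref{lem:Diff-heat-kernel} to the Poisson kernel via \eqref{e2.6}, and compute.

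Your argument is correct. Two comments. First, the key point you flag yourself---the order-$\lfloor m/2\rfloor$ vanishing of $Q_m$ at the origin---is right, and your Hermite-polynomial computation gives it cleanly: writing $\partial_t^m(te^{-at^2})=\tfrac{(-1)^m}{2}a^{(m-1)/2}H_{m+1}(\sqrt a\,t)e^{-at^2}$ with $a=1/(4v)$, the parity of $H_{m+1}$ forces exactly that vanishing. This is what upgrades the naive bound $t(t^2+r^2)^{-(n+1)/2}$ to $t^m(t^2+r^2)^{-(n+m)/2}$ in (ii). Second, in (iii) your final sentence is slightly loose: for $t>\rho(x)$ the factor $(t/\rho)^\delta$ is not ``comparable to $1$'' but rather $\ge 1$, which is precisely what you need so that the bound $(1+t/\rho)^{-N'}$ you obtain is dominated by $(t/\rho)^\delta(1+t/\rho)^{-N}$ once $N'\ge N$. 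With that phrasing adjusted, the proof goes through.
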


\bigskip

Moreover, combining Lemma~\ref{lem:Diff-heat-kernel} and \eqref{e2.6},  it's easy to verify, for each $x\in \mathbb{R}^n$,
\begin{equation}\label{eqn:aux-compare-Poisson}
	\left|e^{-t\sqrt{\L}}(1)(x) -1 \right|=\left|e^{-t\sqrt{\L}}(1)(x) -e^{-t\sqrt{-\Delta}} (1)(x) \right|\leq C \left(\frac{t}{ \rho(x)}\right)^{2-n/q}.
\end{equation}

\bigskip

For $s>0$, we define
$$
    {\Bbb F}(s):=\Big\{\psi:{\Bbb C}\to{\Bbb C}\ {\rm measurable}: \ \  |\psi(z)|\leq C {|z|^s\over ({1+|z|^{2s}})}\Big\}.
$$
Then for any non-zero function $\psi\in {\Bbb F}(s)$, we have that $\left(\int_0^{\infty}|{\psi}(t)|^2\frac{dt}{t}\right)^{1/2}<\infty$. Denote  $\psi_t(z):=\psi(tz)$ for $t>0$. It follows from the spectral theory in \cite{Yo} that for any $f\in L^2(\mathbb{R}^n)$,
\begin{align}\label{eqn:HFC-spectral}
    \Big\{\int_0^{\infty}\|\psi(t\sqrt{\L})f\|_{L^2(\mathbb{R}^n)}^2\,{dt\over t}\Big\}^{1/2} &=\Big\{\int_0^{\infty}\big\langle\,\overline{\psi}(t\sqrt{\L})\, \psi(t\sqrt{\L})f, f\big\rangle\, {dt\over t}\Big\}^{1/2}\nonumber\\
    &=\Big\{\big\langle \int_0^{\infty}|\psi|^2(t\sqrt{\L}) \,{dt\over t}f, f\big\rangle\Big\}^{1/2}\nonumber\\
    &\leq \kappa \|f\|_{L^2(\mathbb{R}^n)},
\end{align}
where $\kappa=\big\{\int_0^{\infty}|{\psi}(t)|^2\, {dt/t}\big\}^{1/2}$. The estimate  will be often used in this article.

\bigskip

\subsection{ ${\rm BMO}_{\L}$ spaces}\label{subsec:BMO-tent}
 The following characterization theorem for ${\rm BMO}_{\L}(\mathbb{R}^n )$ was  proved in \cite{DGMTZ}.
\begin{theorem}\label{thm:Carleson-BMO}
Let  $V\nequiv 0$ be  a nonnegative potential in $ {\rm RH}_q$, for some $q>n/2$. The following statements  are equivalent.
\begin{itemize}
\item [(i) ] $f$ is a function in ${\rm BMO}_{\L}(\mathbb{R}^n)$;

\smallskip

\item[(ii) ] $f\in L^2(\mathbb{R}^n, (1+|x|)^{-(n+\beta)}dx)$ for some $\beta>0$, and $\big\|t\sqrt{\L}e^{-t\sqrt{\L}}f(x) \big\|_{T_2^\infty}<\infty$;

\smallskip

\item [(iii)] $f\in L^p_{\rm Loc}(\mathbb{R}^n)$ and $\|f\|_{{\rm BMO}^p_{\L}}<\infty$,  where $1<p<\infty$ and
$$
     \|f\|_{{\rm BMO}^p_{\L}(\mathbb{R}^n)}:=\sup_{B}\left(\frac{1}{|B|} \int_{B}\left|f(y)-f_{B}\right|^p   d y\right)^{\frac{1}{p}}+\sup_{B:\, r_B\geq \rho(x_B)} \left(\frac{1}{|B(x_B,r_B)|} \int_{B(x_B,r_B)}|f(y)|^p\, d y\right)^{\frac{1}{p}};
$$

\smallskip

\item [(iv)] $f$ is in the dual space of $H_{\L}^1(\mathbb{R}^n)$.  Here, $H_{\L}^1(\mathbb{R}^n)$  is   defined by
$$
H_{\mathcal{L}}^{1}\left(\mathbb{R}^{n}\right)=\left\{f \in L^{1}\left(\mathbb{R}^{n}\right): \, \|f\|_{H_{\mathcal{L}}^{1}}:=\Big\|\sup _{t>0} \big|e^{-t \sqrt{\mathcal{L}}} f(x)\big|\,\Big\|_{ L^{1}}<\infty\right\}.$$
\end{itemize}

Moreover, the norms in above cases  are equivalent:
$$\|f\|_{{\rm BMO}_{\L}}\approx \big\|t\sqrt{\L}e^{-t\sqrt{\L}}f\big \|_{T_2^\infty}\approx \|f\|_{{\rm BMO}^p_{\L}}\approx \|f\|_{(H_{\mathcal{L}}^{1})^*}.$$
\end{theorem}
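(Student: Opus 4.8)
\emph{(Proof plan.)} The plan is to prove the four assertions pairwise equivalent by running everything through (i): I would establish (i) $\Leftrightarrow$ (iii) by a John--Nirenberg argument, (i) $\Leftrightarrow$ (iv) by the $H_{\L}^1$--${\rm BMO}_{\L}$ duality, (i) $\Rightarrow$ (ii) by a Carleson-measure estimate for the Poisson extension $e^{-t\sqrt{\L}}f$, and (ii) $\Rightarrow$ (iv) via a Calder\'on reproducing formula together with tent-space duality; the quantitative norm comparisons are recorded as each implication is carried out.

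For (i) $\Leftrightarrow$ (iii), the implication (iii) $\Rightarrow$ (i) is just H\"older's inequality, so the content is (i) $\Rightarrow$ (iii). First I would note that ${\rm BMO}_{\L}(\RR)\hookrightarrow{\rm BMO}(\RR)$: on a ball with $r_B<\rho(x_B)$ the oscillation is directly one of the terms in \eqref{eqn:def-BMOL-norm}, while for $r_B\ge\rho(x_B)$ one has $\frac1{|B|}\int_B|f-f_B|\le\frac2{|B|}\int_B|f|\le2\|f\|_{{\rm BMO}_{\L}}$. The classical John--Nirenberg inequality then upgrades the $L^1$ oscillation to the $L^p$ oscillation, giving the first supremum in $\|f\|_{{\rm BMO}^p_{\L}}$, and for the second supremum, when $r_B\ge\rho(x_B)$, I would bound $\big(\frac1{|B|}\int_B|f|^p\big)^{1/p}\lesssim\big(\frac1{|B|}\int_B|f-f_B|^p\big)^{1/p}+|f_B|\lesssim\|f\|_{{\rm BMO}_{\L}}$, using John--Nirenberg again and $|f_B|\le\frac1{|B|}\int_B|f|$. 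For (i) $\Leftrightarrow$ (iv) I would use that $H_{\L}^1(\RR)$ as in (iv) admits an atomic decomposition with two kinds of atoms --- classical cancellation atoms supported in balls with $r_B\le\rho(x_B)$, and ``$\rho$-atoms'' supported in balls with $r_B\ge\rho(x_B)$ carrying only the size bound $\|a\|_\infty\le|B|^{-1}$: for $f\in{\rm BMO}_{\L}$ the pairing $\int fa$ is controlled on the first type by the classical ${\rm BMO}$--$H^1$ duality (via the embedding above) and on the second by $\|a\|_\infty\int_B|f|\le\|f\|_{{\rm BMO}_{\L}}$, so $f\in(H_{\L}^1)^*$ with controlled norm; conversely a functional on $H_{\L}^1$ restricts to each $L^2(B)$ (on mean-zero functions when $r_B\le\rho(x_B)$, on all of $L^2(B)$ when $r_B\ge\rho(x_B)$), and the local Riesz representatives are compatible on overlaps and glue to a single $f$ with $\|f\|_{{\rm BMO}_{\L}}\lesssim\|f\|_{(H_{\L}^1)^*}$.

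For (i) $\Rightarrow$ (ii), fixing a ball $B=B(x_B,r_B)$ and passing from the tent form of the Carleson functional to the equivalent box form $r_B^{-n}\int_0^{r_B}\!\int_B|\cdot|^2\,\frac{dx\,dt}{t}$, I would split $f=f_{2B}+(f-f_{2B})\chi_{2B}+(f-f_{2B})\chi_{(2B)^c}$. The constant piece is handled via $t\sqrt{\L}e^{-t\sqrt{\L}}(1)=-t\partial_t e^{-t\sqrt{\L}}(1)$ and Lemma~\ref{lem:Poisson-kernels}(iii), which gives $|t\sqrt{\L}e^{-t\sqrt{\L}}(1)(x)|\lesssim(t/\rho(x))^\delta(1+t/\rho(x))^{-N}$, whose square integrates in $\frac{dt}{t}$ over $(0,\infty)$ to a constant uniformly in $x$; combined with the standard bound $|f_{2B}|\lesssim\|f\|_{{\rm BMO}_{\L}}\log(2+\rho(x_B)/r_B)$ (the logarithm being absorbed by the factor $(r_B/\rho(x_B))^{2\delta}$ when $r_B\le\rho(x_B)$, where Lemma~\ref{lem:size-rho} makes $\rho$ essentially constant on $B$) this contributes $\lesssim\|f\|_{{\rm BMO}_{\L}}^2 r_B^n$. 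The local piece uses the $L^2$ quadratic estimate \eqref{eqn:HFC-spectral} for $\psi(z)=ze^{-z}$ and (iii): $\iint_{\RR\times(0,\infty)}|t\sqrt{\L}e^{-t\sqrt{\L}}((f-f_{2B})\chi_{2B})|^2\frac{dx\,dt}{t}\lesssim\int_{2B}|f-f_{2B}|^2\lesssim\|f\|_{{\rm BMO}_{\L}}^2|B|$. The tail piece uses the kernel bound of Lemma~\ref{lem:Poisson-kernels}(ii) with $m=1$ and the usual annular decomposition together with $|f_{2^kB}-f_{2B}|\lesssim k\|f\|_{{\rm BMO}_{\L}}$ to obtain the pointwise bound $|t\sqrt{\L}e^{-t\sqrt{\L}}((f-f_{2B})\chi_{(2B)^c})(x)|\lesssim(t/r_B)\|f\|_{{\rm BMO}_{\L}}$ on the box, which integrates to $\lesssim\|f\|_{{\rm BMO}_{\L}}^2 r_B^n$; summing the three contributions yields $\|t\sqrt{\L}e^{-t\sqrt{\L}}f\|_{T_2^\infty}\lesssim\|f\|_{{\rm BMO}_{\L}}$.

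Finally, for (ii) $\Rightarrow$ (iv), setting $F:=t\sqrt{\L}e^{-t\sqrt{\L}}f\in T_2^\infty$ and using a spectral Calder\'on reproducing identity $\Id=c_\psi\int_0^\infty\widetilde\psi(t\sqrt{\L})\psi(t\sqrt{\L})\frac{dt}{t}$ with $\psi(z)=ze^{-z}$ and a suitable $\widetilde\psi\in{\Bbb F}(s)$, I would pair $f$ against a finite atomic combination $a\in H_{\L}^1$ to get $\langle f,a\rangle=c_\psi\iint_{\RR\times(0,\infty)}F(y,t)\,\overline{\widetilde\psi(t\sqrt{\L})a(y,t)}\,\frac{dy\,dt}{t}$, then invoke the $L^1$-boundedness of the $\L$-adapted conical square function --- so that $\widetilde\psi(t\sqrt{\L})a\in T_2^1$ with $\|\widetilde\psi(t\sqrt{\L})a\|_{T_2^1}\lesssim\|a\|_{H_{\L}^1}$, proved by splitting the $x$-integral over a dilate of the atom's supporting ball and its complement, using the $L^2$ square-function bound on the former and kernel decay plus the atom's cancellation or $\rho$-localization on the latter --- together with the $T_2^1$--$T_2^\infty$ pairing inequality to conclude $|\langle f,a\rangle|\lesssim\|F\|_{T_2^\infty}\|a\|_{H_{\L}^1}$; by density of finite atomic sums this gives $f\in(H_{\L}^1)^*$ with $\|f\|_{(H_{\L}^1)^*}\lesssim\|F\|_{T_2^\infty}$, which closes the cycle and yields all the displayed norm equivalences. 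The hard part will be this last implication: making the reproducing formula and the pairing identity rigorous when $f$ only lies in $L^2(\RR,(1+|x|)^{-(n+\beta)}dx)$ rather than $L^2(\RR)$ (truncating the $t$-integral to $[\varepsilon,\varepsilon^{-1}]$ and localizing in space, then removing the truncations), and establishing the uniform $H_{\L}^1\to L^1$ bound for the conical square function, for which the heat and Poisson kernel estimates of Lemmas~\ref{lem:heat-Schrodinger}--\ref{lem:Poisson-kernels} and the two-type atomic structure of $H_{\L}^1$ are needed in full.
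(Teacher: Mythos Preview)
The paper does not prove Theorem~\ref{thm:Carleson-BMO}; it is quoted as a known result from \cite{DGMTZ} (with the $\widetilde{{\rm BMO}}_{\L}$ variant of (ii) also attributed to \cite[Proposition~6.11]{DY2}), so there is no in-paper proof to compare against. Your plan is the standard route and is essentially sound: (i)$\Leftrightarrow$(iii) via John--Nirenberg, (i)$\Leftrightarrow$(iv) via the two-type atomic decomposition of $H_{\L}^1$, (i)$\Rightarrow$(ii) by the three-piece Carleson estimate, and (ii)$\Rightarrow$(iv) by a reproducing formula and $T_2^1$--$T_2^\infty$ duality.

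A couple of small points to tighten. In (i)$\Rightarrow$(ii) you only discuss the constant piece $f_{2B}$ in the regime $r_B\le\rho(x_B)$; when $r_B>\rho(x_B)$ the logarithmic bound on $|f_{2B}|$ is not what you want, but in that case $|f_{2B}|\le\frac{1}{|2B|}\int_{2B}|f|\lesssim\|f\|_{{\rm BMO}_{\L}}$ directly from \eqref{eqn:def-BMOL-norm}, and one still needs a finite-overlap covering of $B$ by critical balls (as in the paper's proof of Theorem~A, Case~1 for $J_{B,3}$) to control $\int_0^{r_B}\!\int_B(t/\rho(x))^{2\delta}(1+t/\rho(x))^{-2N}\frac{dx\,dt}{t}$ uniformly, since $\rho$ need not be comparable to a single value on $B$. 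In (ii)$\Rightarrow$(iv), the pairing identity you write down is exactly the device the present paper invokes in \eqref{eqn:identity-reproducing}, citing \cite[Proposition~5.1]{DY2} for the justification under the weighted-$L^2$ hypothesis on $f$; that reference (or the analogous argument in \cite{HM}) is where the truncation-and-limit argument you anticipate is actually carried out, so pointing there would spare you reinventing it.
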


The following fact is used often below, which can be found in \cite[Lemma 2]{DGMTZ}.
\begin{lemma}\label{average on B}
There exists $C>0$ such that, for any  function $f\in {\rm BMO}_{\L}$ and any ball $B(x,r)$ of ${\mathbb R}^n$ with $r<\rho(x)$, then
$$
\left |\frac{1}{|B(x,r)|} \int_{B(x,r)} f(y) \, dy \right|\leq C \left(1+\log \frac{\rho(x)}{r}\right)\|f\|_{{\rm BMO}_{\L}}.
$$
\end{lemma}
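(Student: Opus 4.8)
The plan is to run the classical telescoping (John--Nirenberg-type) argument over a chain of concentric dilates of $B(x,r)$, with the critical radius $\rho(x)$ prescribing where the chain terminates and thereby producing the logarithmic factor.

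Concretely, I would fix $B_0:=B(x,r)$, let $N$ be the least nonnegative integer with $2^{N}r\geq \rho(x)$ (so that $2^{N-1}r<\rho(x)\leq 2^{N}r<2\rho(x)$ and hence $N\leq 1+\log_2(\rho(x)/r)$, using $r<\rho(x)$), and set $B_j:=B(x,2^{j}r)$ for $0\leq j\leq N$. Splitting $f_{B_0}=(f_{B_0}-f_{B_N})+f_{B_N}$, the term $|f_{B_N}|$ is immediately bounded by $\|f\|_{{\rm BMO}_{\L}}$: the ball $B_N$ has radius $2^{N}r\geq \rho(x)=\rho(x_{B_N})$, so the second supremum in the definition \eqref{eqn:def-BMOL-norm} gives $|f_{B_N}|\leq \tfrac1{|B_N|}\int_{B_N}|f|\leq \|f\|_{{\rm BMO}_{\L}}$. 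For the difference I would telescope along the chain, bounding each consecutive difference $|f_{B_j}-f_{B_{j+1}}|$ by $2^{n}$ times the mean oscillation of $f$ over $B_{j+1}$, which in turn is $\lesssim \|f\|_{{\rm BMO}_{\L}}$ uniformly in $j$. Summing the $N$ such terms and adding $|f_{B_N}|$ yields $|f_{B_0}|\leq C(N+1)\|f\|_{{\rm BMO}_{\L}}\leq C(1+\log(\rho(x)/r))\|f\|_{{\rm BMO}_{\L}}$, which is the claim.

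The argument is routine, and the only point that needs a moment's care is the uniform mean-oscillation bound for every ball in the chain: for a ball $B_{j+1}$ whose radius lies below the critical radius at its center this is directly the first supremum in \eqref{eqn:def-BMOL-norm}, while for the later balls, whose radius may already exceed $\rho$ at their center, one instead uses $\tfrac1{|B_{j+1}|}\int_{B_{j+1}}|f-f_{B_{j+1}}|\leq \tfrac2{|B_{j+1}|}\int_{B_{j+1}}|f|\leq 2\|f\|_{{\rm BMO}_{\L}}$ via the second supremum; in either case the summand is $\leq C_n\|f\|_{{\rm BMO}_{\L}}$. Since every ball in the chain is centered at the single point $x$ and the chain stops exactly at scale $\rho(x)$, no appeal to the slow variation of $\rho$ (Lemma~\ref{lem:size-rho}) is needed, and I do not anticipate any genuine obstacle in carrying this out.
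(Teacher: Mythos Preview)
Your proposal is correct and is precisely the standard telescoping argument for such a logarithmic bound; the paper does not supply its own proof here but simply cites \cite[Lemma~2]{DGMTZ}, where the same chain-of-doubles computation is carried out. One minor remark: in your chain only the terminal ball $B_N$ can have radius $\geq\rho(x)$ (since $2^{N-1}r<\rho(x)$), so the ``later balls'' case in your last paragraph applies just once, but your handling of it is correct regardless.
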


\begin{remark}
 For further theory of  ${\rm BMO}$ and $\rm CMO$ spaces associated to differential operators, we refer the reader to \cite{A,ADM,AMR,CL,DuLi, DY1,DY2,HM,HLMMY,JY1,Ky,HWL} and the references  therein.
\end{remark}

\bigskip




\section{Proof of Theorem ~B}\label{sec:BMO-Carleson}
\setcounter{equation}{0}

\smallskip

In this section, we will show an equivalent characterization for ${\rm CMO}_{\L}(\mathbb{R}^n)$ by using tent spaces.

\begin{proof}[Proof of  Theorem B]
  Suppose $f\in {\rm CMO}_{\L}(\mathbb{R}^n)$, then   $f\in {\rm BMO}_{\L}(\mathbb{R}^n)$.  By Theorem~\ref{thm:Carleson-BMO}, we have that
$ t\sqrt{\L} e^{-t\sqrt{\L}}f(x) \in T_2^\infty$ and  $\|  t\sqrt{\L}e^{-t\sqrt{\L}}f \|_{T_2^\infty}\approx \|f\|_{{\rm BMO}_{\L}}$.  We will  prove $\eta_1 \big(t\sqrt{\L} e^{-t\sqrt{\L}}f\big)=\eta_2\big(t\sqrt{L} e^{-t\sqrt{\L}}f\big)=\eta_3\big(t\sqrt{\L} e^{-t\sqrt{\L}}f\big)=0$, where $\left\{\eta_i\big(t\sqrt{\L} e^{-t\sqrt{\L}}f\big)\right\}_{i=1}^3$ are defined in Section~\ref{sec:2}.

\medskip

To this end, we will prove that there exists a positive constant $c>0$ such that, for any ball $B=B(x_B,r_B)$,
\begin{equation}\label{eqn:key-aux}
    \left(\frac{1}{|B|}\iint_{\widehat{B}} \left|t\sqrt{\L} e^{-t\sqrt{\L}}f(x)\right|^2 \frac{dx\,dt}{t}\right)^{1/2}\leq c \sum_{k=0}^\infty 2^{-k} \delta_k(f,B),
\end{equation}
where
\begin{equation}\label{def:delta-k}
   \delta_k(f,B)=\sup_{B':B'\subseteq 2^{k+2}B,\, r_{B'}\in [2^{-1}r_B, 2r_B]} \left(\frac{1}{|B'|}\int_{B'} \left|\left(I-e^{-r_{B'}\sqrt{\L}}\right) f(x)\right|^2 dx\right)^{1/2}.
\end{equation}

Once the estimate \eqref{eqn:key-aux} is proved, $F:=t\sqrt{\L} e^{-t\sqrt{\L}}f\in T_{2,C}^\infty$ follows readily. Concretely, it is clear that for any $k=0,1,\cdots$, there holds $\delta_k (f,B)\leq \|f\|_{\widetilde{{\rm BMO}_{\L}}(\mathbb{R}^n)} \approx \|f\|_{{\rm BMO}_{\L}(\mathbb{R}^n)}$, where the $\widetilde{{\rm BMO}_{\L}}-$norm is given in \eqref{eqn:def-BMOL-norm-II}. Moreover, one may apply the condition $\gamma_1(f)=\gamma_2(f)=\gamma_3(f)=0$ to obtain that for any given $k$,
\begin{equation}\label{eqn:delta-k}
	\lim_{a\to 0}\sup_{B:\, r_B\leq a} \delta_k(f,B)=\lim_{a\to +\infty}\sup_{B:\, r_B\geq a} \delta_k(f,B)=\lim_{a\to +\infty} \sup_{B:\, B\subseteq \left(B(0,a)\right)^c} \delta_k(f,B)=0.
\end{equation}
It follows from \eqref{eqn:key-aux}  that
\begin{align*}
	\left(\frac{1}{|B|} \iint_{\widehat{B}} \left|t\sqrt{\L} e^{-t\sqrt{\L}} f(x)\right|^2  \frac{dx\,dt}{t} \right)^{1/2} &\leq c \sum_{k=0}^{\kappa_0} 2^{-k }\delta_k (f,B)+c \sum_{k=\kappa_0+1}^\infty 2^{-k} \|f\|_{{\rm BMO}_{\L}} \\
	& \leq  c \sum_{k=0}^{\kappa_0} 2^{-k }\delta_k (f,B)+c  2^{-\kappa_0} \|f\|_{{\rm BMO}_{\L}}.
\end{align*}
Note that if $\kappa_0$ is large enough, then the quantity $2^{-\kappa_0}\|f\|_{{\rm BMO}_{\L}}$ is sufficiently small. Fix a $\kappa_0$, we then use the property \eqref{eqn:delta-k} to obtain $\eta_1(F)=\eta_2(F)=\eta_3(F)=0$, as desired.

It suffices to prove estimate \eqref{eqn:key-aux}.  As  observed in \cite{HM},  we rewrite
\begin{align*}
	   f&=\frac{1}{r_B}\int_{r_B}^{2r_B} \left(I-e^{-s\sqrt{\L}}\right) f ds+\frac{1}{r_B}\int_{r_B}^{2r_B} e^{-s\sqrt{\L}}f ds\\
	   &= \frac{1}{r_B}\int_{r_B}^{2r_B} \left(I-e^{-s\sqrt{\L}}\right) f ds +(r_B\sqrt{\L})^{-1}e^{-r_B\sqrt{\L}}\left(I-e^{-r_B\sqrt{\L}}\right) f
\end{align*}
for any $B\subset {\mathbb R}^n$.  Then
\begin{align*}
	   \text{\rm LHS of }\eqref{eqn:key-aux} \leq\, & \sup_{s\in [r_B,2r_B]} \left(\frac{1}{|B|} \iint_{\widehat{B}}\left|  t\sqrt{\L} e^{-t\sqrt{\L}} \left(I-e^{-s\sqrt{\L}}\right)f(x)\right|^2 \frac{dx\,dt}{t} \right)^{1/2}  \\
	   &\qquad\qquad\qquad+ \left(\int_0^{r_B} \frac{t^2}{{r_B}^2} \big\|e^{-(t+r_B)\sqrt{\L}}\left(I-e^{-r_B\sqrt{\L}}\right)f\big\|_{L^\infty (B)}^2 \frac{dt}{t}\right)^{1/2}.
\end{align*}
 For any given $s\in [r_B,2r_B]$,  let
 $$F_{s,0}(y):=\chi_{2B}(y)\left(I-e^{-s\sqrt{\L}}\right)  f(y) \  {\rm and } \  F_{s,k}(y):=\chi_{2^{k+1}B\setminus {2^k B}}(y)\left(I-e^{-s\sqrt{\L}}\right)  f(y) \  {\rm for } \ k\geq 1  \ {\rm and} \  y\in \mathbb{R}^n.$$
Then
\begin{align}\label{eqn:key-aux-est}
	   \text{\rm LHS of }\eqref{eqn:key-aux}\leq\, & \sup_{s\in [r_B,2r_B]} \left(\frac{1}{|B|} \iint_{\mathbb{R}_+^{n+1}}\left|  t\sqrt{\L} e^{-t\sqrt{\L}} F_{s,0}(x)\right|^2 \frac{dx\,dt}{t} \right)^{1/2} \nonumber \\
& + \sup_{s\in [r_B,2r_B]}  \sum_{k= 1}^\infty \left(\int_0^{r_B}\left\|  t\sqrt{\L} e^{-t\sqrt{\L}} F_{s,k}\right\|_{L^\infty(B)}^2 \frac{dt}{t} \right)^{1/2} \nonumber \\
	   & + \sum_{k=0}^\infty\left(\int_0^{r_B} \frac{t^2}{{r_B}^2} \big\|e^{-(t+r_B)\sqrt{\L}}F_{r_B, k}\big\|_{L^\infty (B)}^2 \frac{dt}{t}\right)^{1/2}\nonumber\\
=:& I_0+\sum_{k=1}^\infty I_k+\sum_{k=0}^\infty II_k.
\end{align}

By \eqref{eqn:HFC-spectral}, we have
$$
     I_0\leq \sup_{s\in [r_B,2r_B]}\left(    \frac{1}{|B|}\int_{2B} \left|\left(I-e^{-s\sqrt{\L}}\right)  f(x)\right|^2 dx\right)^{1/2}.
$$
Note that there exists a positive constant $N_0=N_0(n)$ such that for every fixed $s\in [r_B,2r_B]$, the ball $2B$ can be covered by  finite-overlapped balls $\{B(x_i, s)\}_{i=1}^{N_0}$, where each $B(x_i, s)\subseteq 4B$. Hence,
$$
   I_0\leq C \delta_0(f,B).
$$

For any $s\in [r_B, 2r_B]$, $x\in B$ and $k\geq 1$, it follows from (ii) of Lemma \ref{lem:Poisson-kernels} that
\begin{align*}
  \left|t\sqrt{\L}e^{-t\sqrt{\L}}F_{s,k}(x)\right| & \leq  C \frac{t}{2^{k(n+1)}r_B} \frac{1}{|B|}\int_{2^{k+1}B} \left|\left(I-e^{-s\sqrt{\L}}\right)f(y)\right|dy.
\end{align*}
Moreover, it can be verified that for any ball $B(x_B,2^{k+1}r_B)$, there exists a corresponding collection of balls $B^{(k)}_{1},\, B^{(k)}_{2},\ldots , B^{(k)}_{{N_k}}$ such that
\begin{itemize}
  \item [(a) ] each ball $B^{(k)}_{i}$ is of the radius $s$ and $B^{(k)}_{i}\subseteq B(x_B,(2+2^{k+1})r_B)\subseteq B(x_B,2^{k+2}r_B)$;
  \smallskip
  \item [(b) ] $B(x_B,2^{k+1}r_B)\subseteq \bigcup_{i=1}^{N_k} B^{(k)}_{i}$;
    \smallskip
  \item[(c) ] there exists a constant $c>0$ independent of $k$ such that $N_k\leq c 2^{kn}$;
\smallskip
  \item[(d) ] $\sum_{i=1}^{N_k} \chi_{B^{(k)}_{i}}(x)\leq K$ for each $x\in B(x_B, 2^{k+1}r_B)$, where $K$ is independent of $k$.
    \smallskip
\end{itemize}

From the properties (a) -- (d) above, we may apply H\"older's inequality to obtain
\begin{align*}
  \left|t\sqrt{\L}e^{-t\sqrt{\L}}F_{s,k}(x)\right| & \leq C\frac{t}{2^{k(n+1)}r_B} \sum_{i=1}^{N_k}\left( \frac{1}{|B_{k_i}|} \int_{B_{k_i}} \left| \left(I-e^{-s\sqrt{\L}}\right)f(y) \right|^2 dy\right)^{1/2}\\
  &\leq C \frac{t}{2^{k}r_B}  \delta_k(f,B),
\end{align*}
which gives
$$
    I_k\leq C  \left(  \int_0^{r_B} \frac{t^2}{{r_B}^2} \frac{dt}{t}\right)^{1/2}2^{-k} \delta_k(f,B)  \leq C2^{-k}\delta_k(f,B).
$$

\smallskip

A similar argument  can be used to show $ II_k\leq C2^{-k}\delta_k(f,B)$ for $k\geq 0$, by noting that  the kernel $\mathcal{P}_{t+r_B}(x,y)$ of $e^{-(t+r_B)\sqrt{\L}}$, where  $0\leq t\leq r_B$,  satisfies
$$
     \left|\mathcal{P}_{t+r_B}(x,y)\right|\leq
     \begin{cases}
       C{r_B}^{-n}  & \mbox{if } |x-y|<2r_B, \\
         C r_B |x-y|^{-(n+1)}  & \mbox{otherwise},
     \end{cases}
$$
which  follows from (i) of Lemma \ref{lem:Poisson-kernels}.

 Plugging all estimates of terms $I_k$ and $II_k$ ($k\geq 0$) into \eqref{eqn:key-aux-est}, we deduce   \eqref{eqn:key-aux}, and then  get $ t\sqrt{\L} e^{-t\sqrt{\L}}f(x) \in T_{2,C}^\infty$.

\medskip

Conversely,  suppose that  $f\in L^2(\mathbb{R}^n, (1+|x|)^{-(n+\beta)}dx)$ for some $\beta>0$ and $ t\sqrt{\L} e^{-t\sqrt{\L}}f(x) \in T_{2,C}^\infty$. Let us prove $f\in {\rm CMO}_{\L}$.  In fact,
by applying  an  argument similar to  that of  \cite[Proposition~5.1]{DY2},  one can prove that the following identity
\begin{equation}\label{eqn:identity-reproducing}
    \int_{\mathbb{R}^n} f(x)g(x) \,dx=4 \iint_{\mathbb{R}_+^{n+1}} \left(t\sqrt{\L}e^{-t\sqrt{\L}}f\right)(x) \left(t\sqrt{\L}e^{-t\sqrt{\L}}g\right)(x)\frac{dx\,dt}{t}
\end{equation}
holds for any $f\in {\rm BMO}_{\L}$ and $g\in H_{\L}^1\cap L^2$.  Then the aimed result of $f\in {\rm CMO}_{\L}$  easily follows by a simple modification of    \cite[Proposition 3.3]{DDSTY} in which
 the  representation formula  (3.17) is replaced by \eqref{eqn:identity-reproducing}.  We have completed the proof of  Theorem B.
\end{proof}

\bigskip




\section{Proof of Theorem ~C}\label{sec:CMO-Schrodinger}
\setcounter{equation}{0}

\smallskip

Due to (ii) of Theorem~\ref{thm:CMO-dual-known}, the main difficulty of showing Theorem~C is to prove the implication (c) $\Rightarrow$ (a) of Theorem~C. That is, for any given $f\in \mathcal{B}_{\L}$,  one needs to approximate it in ${\rm BMO}_{\L}$ norm by using $C_c^\infty(\mathbb{R}^n)$ functions. To this end, we first make an attempt to show Theorem~C by using  a standard mollifier. Such an approach  has been successfully applied to character a local version of ${\rm CMO}(\mathbb{R}^n)$, which can be regarded as  ${\rm CMO}_{-\Delta+1}$;  see \cite{D} for details. However,  Lemma~\ref{lem:CMO-identity-approx} below tells us  this approach  is not completely effective to approximate a given function in $\mathcal{B}_{\L}$ directly, which in turn  reveals a certain difference between ${\rm CMO}_{-\Delta+1}$ and ${\rm CMO}_{-\Delta+V}$, due to the lack of uniform bounds for the variable function $\rho(x)$ defined by $V$.

\medskip

Let $\phi\in C_c^\infty(\mathbb{R}^n)$ be a radial bump function satisfying:
\begin{equation}\label{eqn:bump}
   {\rm supp}\, \phi\subseteq B(0,1),\quad        0\leq \phi\leq 1  \quad {\rm and }\quad \int \phi(x)\, dx=1.
\end{equation}
Let  $\phi_t(x):=t^{-n}\phi(x/t)$ for every $x\in \mathbb{R}^n$ and $t>0$. For any given $f\in L_{\rm loc}^1(\mathbb{R}^n)$, define
\begin{equation}\label{eqn:approx-operator}
     A_t(f)(x):=\phi_t * f(x)=\int_{\mathbb{R}^n} \phi_t(x-y)f(y)\, dy,\    \   x\in \mathbb{R}^n.
\end{equation}
 It's clear that $A_t(f)\in C^\infty(\mathbb{R}^n)$.  For $z\in \mathbb{R}^n$, denote $\tau_z(B):=\{x-z:\, x\in B\}$. We have the following lemma.

\begin{lemma}\label{lem:CMO-identity-approx}
Suppose $V\in {\rm RH}_q$ for some $q> n/2$.
 Let $f\in \mathcal{B}_{\L}$, where $\mathcal{B}_{\L}$ is the space defined in Theorem~C. Let $\phi$ and $A_t(f)$  be given in  \eqref{eqn:bump} and \eqref{eqn:approx-operator}, respectively.
 \begin{itemize}
   \item [(i) ] $A_t(f)$ is uniformly continuous on $\mathbb{R}^n$ and
   $\widetilde{\gamma}_1(A_t (f))=\widetilde{\gamma}_2(A_t (f))=\widetilde{\gamma}_3(A_t (f))=\widetilde{\gamma}_4(A_t (f))=0$  for each $0<t<1$.

   \smallskip

   \item [(ii) ] For any $\varepsilon>0$, there exist positive constants $R>>1$ and $t_0<<1$ such that for all $0<t<t_0$,
 \begin{subequations}
\begin{equation}\label{eqn:BMO-approx-a}
      \|A_t(f)-f\|_{{\rm BMO}}<\varepsilon,
\end{equation}
and
\begin{equation}\label{eqn:BMO-approx-b}
      \|A_t(f)-f\|_{{\rm BMO}_{\L}(\mathbb{R}^n)}\leq \varepsilon+ \sup_{\substack{B(x_B,r_B)\subseteq (B(0,(2c+2)R))^c\\  \rho(x_B) <r_B<R, \,\rho(x_B)<t }}\sup_{|z|\leq t}
   \left(\frac{1}{|\tau_z(B)|}\int_{\tau_z(B)} |f(x)|^2dx\right)^{1/2},
\end{equation}
\noindent where $c$ is  the constant in Lemma~\ref{lem:size-rho}.
\end{subequations}

\smallskip

\item [(iii)] If $f\in \mathcal{B}_{\L}$ with compact support, then $A_t(f)\in C_c^\infty(\mathbb{R}^n)$ and so $A_t(f)\in \mathcal{B}_{\L}$. Also,
$$
     \lim\limits_{t\to 0} \|A_t(f)-f\|_{{\rm BMO}_{\L}}=0.
$$
 \end{itemize}
\end{lemma}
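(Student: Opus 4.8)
The plan is to treat the three items largely independently, exploiting the basic size/regularity properties of the mollifier $A_t$ together with the known facts about $\rho$ (Lemmas 2.5, 2.6) and the comparison estimate \eqref{eqn:aux-compare-Poisson}. For item (i), uniform continuity of $A_t(f)$ for fixed $t<1$ is standard: since $\phi_t$ is a fixed bump and $f\in{\rm BMO}_\L\subseteq{\rm BMO}$, a translation estimate $\|A_t(f)(\cdot+h)-A_t(f)\|_\infty\lesssim \|\nabla\phi_t\|_1|h|\,\|f\|_{\rm BMO}$ (with the usual logarithmic correction from the averages of a BMO function) gives the claim. For the four vanishing conditions, the crucial observation is that mollification does not increase oscillation: for any ball $B$, $\fint_B|A_t(f)-(A_t f)_B|^2$ is controlled by a supremum of $\fint_{B'}|f-f_{B'}|^2$ over slightly fatter balls $B'$ (those obtained by translating $B$ by $|z|\le t$ and enlarging), so $\widetilde\gamma_1,\widetilde\gamma_2,\widetilde\gamma_3$ for $A_t(f)$ are dominated by the corresponding $\widetilde\gamma_i(f)=0$. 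For $\widetilde\gamma_4$, one uses that $A_t(f)$ is a bounded continuous function (bounded in terms of $t$) so that on balls with $r_B\ge\max\{a,\rho(x_B)\}$ with $a\to\infty$ the $L^2$ average tends to $0$; here one also invokes that $\rho$ is locally bounded below to guarantee such large balls exist, and the $L^\infty$ bound on $A_t(f)$ makes the average on a ball of radius $\ge a$ go to zero like $\|A_t f\|_\infty \cdot (\text{something})$—more carefully, one splits $f=(f-f_{B})+f_B$ and uses Lemma 2.14 to bound $|f_B|$.

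For item (ii), the estimate \eqref{eqn:BMO-approx-a} $\|A_t(f)-f\|_{\rm BMO}<\varepsilon$ for small $t$ is exactly the classical Neri–Uchiyama fact that mollification converges in ${\rm BMO}$ norm provided the three Uchiyama conditions \eqref{eqn:U-CMO-a}--\eqref{eqn:U-CMO-c} hold, and these are precisely $\widetilde\gamma_1(f)=\widetilde\gamma_2(f)=\widetilde\gamma_3(f)=0$ (after passing between $L^1$ and $L^2$ oscillation averages, which is legitimate by John–Nirenberg). For \eqref{eqn:BMO-approx-b}, recall
$$
\|g\|_{{\rm BMO}_\L}\approx \sup_{B}\Big(\fint_B|g-g_B|^2\Big)^{1/2}+\sup_{r_B\ge\rho(x_B)}\Big(\fint_B|g|^2\Big)^{1/2};
$$
the oscillation part of $A_t(f)-f$ is already $<\varepsilon$ by \eqref{eqn:BMO-approx-a}, so it remains to control $\sup_{r_B\ge\rho(x_B)}\big(\fint_B|A_t(f)-f|^2\big)^{1/2}$. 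I would split the balls with $r_B\ge\rho(x_B)$ into three regimes: (1) $r_B\ge R$ for a large $R$; (2) $\rho(x_B)<r_B<R$ with $B$ near the origin, i.e. $B\subseteq B(0,(2c+2)R)$; (3) $\rho(x_B)<r_B<R$ with $B$ far from the origin. On regime (1), one writes $A_t(f)-f = (A_t(f)-f)\chi + \dots$ and uses $r_B$ large together with $\widetilde\gamma_4(f)=0$ (and $\widetilde\gamma_4(A_t f)=0$ from part (i)) plus the fact that $A_t$ moves mass by at most $t<1\ll R$, so the contribution is $\lesssim\varepsilon$. On regime (2), since $\rho(x_B)<r_B$ forces $\rho(x_B)$ bounded above on a bounded region, and since $\rho$ is also bounded below there (Lemma 2.6), one has $r_B\ge\rho(x_B)\gtrsim c_R>0$; then on this fixed-scale bounded region mollification by $\phi_t$ with $t\to0$ converges in $L^2_{\rm loc}$, giving $<\varepsilon$ for small $t$. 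Regime (3) is the leftover term: when additionally $t\ge\rho(x_B)$ (if $t<\rho(x_B)$ then $A_t$ only averages over scales below the critical radius, where one can again bound things by oscillation plus $\widetilde\gamma_3$), one is forced to keep the explicit remainder
$$
\sup_{\substack{B(x_B,r_B)\subseteq (B(0,(2c+2)R))^c\\ \rho(x_B)<r_B<R,\ \rho(x_B)<t}}\ \sup_{|z|\le t}\Big(\frac{1}{|\tau_z(B)|}\int_{\tau_z(B)}|f(x)|^2\,dx\Big)^{1/2},
$$
the translation $\tau_z$ accounting for the support of $\phi_t$ and the $(2c+2)R$ fattening accounting for the distortion of $\rho$ across balls of radius $<R$ via Lemma 2.6.

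For item (iii), if $f$ additionally has compact support, then $A_t(f)\in C_c^\infty(\mathbb R^n)$ is immediate; that $C_c^\infty\subseteq\mathcal B_\L$ follows from Theorem 1.3(ii) together with (the already-proved direction) that $\mathcal B_\L$ contains ${\rm CMO}_\L$, or directly by checking $\widetilde\gamma_i=0$ for a fixed compactly supported smooth function (all five are trivial: the oscillation ones vanish by smoothness as $r_B\to0$ and by compact support/BMO-estimates as $r_B\to\infty$, and $\widetilde\gamma_4,\widetilde\gamma_5$ vanish because on balls of radius $\ge a$ the $L^2$-average of a fixed $L^\infty$ compactly supported function is $O(a^{-n/2})\to 0$). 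Finally $\lim_{t\to0}\|A_t(f)-f\|_{{\rm BMO}_\L}=0$: here the extra remainder term in \eqref{eqn:BMO-approx-b} vanishes because $f$ has compact support—for $B$ contained in the complement of a large ball, $\tau_z(B)$ also lies outside $\operatorname{supp} f$ once $R$ exceeds the support radius plus $1$, so that supremum is simply $0$, and \eqref{eqn:BMO-approx-b} collapses to $\|A_t(f)-f\|_{{\rm BMO}_\L}\le\varepsilon$.

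The main obstacle is regime (3) in part (ii): the term $\sup_{|z|\le t}\big(\fint_{\tau_z(B)}|f|^2\big)^{1/2}$ genuinely cannot be absorbed, because when $\rho(x_B)<t$ the mollifier reaches across the critical scale and one can only compare $\fint_B|A_t f|^2$ to averages of $|f|$ itself (not of $f-f_B$), on a \emph{translated} ball—and there is no a~priori smallness for $\fint|f|^2$ on far-away supercritical balls coming from $\mathcal B_\L$ alone. This is exactly why \eqref{eqn:BMO-approx-b} is stated as an inequality with a residual term rather than as $\|A_t(f)-f\|_{{\rm BMO}_\L}\to0$, and it is the reason the paper must later supplement $A_t$ with a modified Uchiyama construction (Lemma~\ref{lem:approx-type-II}); the bookkeeping of the radii ($R$, $(2c+2)R$, $t_0$) and the role of the distortion constant $c$ from Lemma~\ref{lem:size-rho} in relating $\rho(x_B)$ to $\rho$ on $\tau_z(B)$ is the delicate part of making the three-regime split airtight.
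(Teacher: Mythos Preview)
Your overall strategy matches the paper's: the Minkowski/translation bound for mollifiers, the three-regime split of the supercritical-ball supremum in the ${\rm BMO}_\L$ norm, and the use of $L^2_{\rm loc}$ convergence on the bounded region where $\rho$ is bounded below. Part~(iii) and the treatment of \eqref{eqn:BMO-approx-a} are fine. In Regime~(3) with $t<\rho(x_B)$ you propose to bound $|B|^{-1}\int_B|A_t f-f|^2$ via oscillation and $\widetilde\gamma_3$ rather than the paper's route through $\widetilde\gamma_5$; this also works, since $t<\rho(x_B)\le r_B$ forces $\tau_z(B)\subseteq 2B$ for every $|z|\le t$, so the whole quantity is controlled by $\big(|2B|^{-1}\int_{2B}|f-f_{2B}|^2\big)^{1/2}$ with $2B\subseteq\big(B(0,R)\big)^c$.

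There is, however, a genuine gap in your argument for $\widetilde\gamma_4(A_t f)=0$ in part~(i). The claim that an $L^\infty$ bound on $A_t f$ makes $\big(|B|^{-1}\int_B|A_t f|^2\big)^{1/2}\to 0$ as $r_B\to\infty$ is simply false; it only gives a fixed bound. Moreover $A_t f$ need not even be bounded for $f\in{\rm BMO}_\L$: for $t<\rho(x)$ one has merely $|A_t f(x)|\lesssim\big(1+\log(\rho(x)/t)\big)\|f\|_{{\rm BMO}_\L}$ by Lemma~\ref{average on B}, and $\rho(x)$ is in general unbounded (cf.\ \eqref{example} in Remark~\ref{rem:gamma4}). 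Your ``more careful'' fix via Lemma~\ref{average on B} is also inapplicable, since that lemma requires $r_B<\rho(x_B)$ whereas $\widetilde\gamma_4$ concerns the opposite regime $r_B\ge\rho(x_B)$, and in any case it yields a logarithmically growing bound, not a vanishing one. The correct argument is the same translation estimate you already use for $\widetilde\gamma_1,\widetilde\gamma_2,\widetilde\gamma_3$: by Minkowski,
\[
\Big(|B|^{-1}\int_B|A_t f|^2\Big)^{1/2}\le\sup_{|z|\le t}\Big(|\tau_z(B)|^{-1}\int_{\tau_z(B)}|f|^2\Big)^{1/2};
\]
for $|z|\le t<1$ and $r_B\ge a\gg 1$ one has $\tau_z(B)\subseteq 2B$, so the right side is at most $2^{n/2}\big(|2B|^{-1}\int_{2B}|f|^2\big)^{1/2}$, and since $2r_B\ge\max\{a,\rho(x_B)\}$ the hypothesis $\widetilde\gamma_4(f)=0$ applies directly.
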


\begin{proof}

(i). The uniform continuity of $A_t(f)$ for  $f\in {\rm BMO}$ was first proved by Dafni in \cite{D}. Of course, it holds for $f\in \mathcal{B}_{\L}$ since $\mathcal{B}_{\L}\subseteq {\rm BMO}$.

 For any $t>0$ and ball $B:=B(c_0,r_0)\subseteq \mathbb{R}^n$,  One may apply Minkowski's inequality and the fact $\int \phi_t(x) dx =1$ to obtain
\begin{align}\label{eqn:aux-approx-1}
    \left(\frac{1}{|B|}\int_B \left|A_t(f)(x)-\left(A_t(f)\right)_B\right|^2 dx\right)^{1/2} & = \left(\frac{1}{|B|}\int_B \left| \int_{\mathbb{R}^n} \left(f(x-z)-\frac{1}{|B|}\int_B f(y-z) \,dy\right) \phi_t(z)   \right|^2 dx\right)^{1/2}   \nonumber\\
    &\leq \int_{\mathbb{R}^n} \left(\frac{1}{|B|} \int_B \left| f(x-z)-\frac{1}{|B|}\int_B f(y-z)dy\right|^2 d x \right)^{1/2}  \phi_t(z) \, dz   \nonumber\\
    & =\int_{\mathbb{R}^n} \left(\frac{1}{|\tau_z (B)|} \int_{\tau_z(B) }  |f(x)-f_{\tau_z(B)} |^2 dx \right)^{1/2} \phi_t(z) \, dz\nonumber\\
    &\leq \sup_{|z|\leq t} \left(\frac{1}{|\tau_z (B)|}\int_{\tau_z(B)} |f(x)-f_{\tau_z(B)}|^2 dx \right)^{1/2}.
\end{align}
Similarly,
\begin{equation}\label{eqn:aux-approx-2}
    \left|\left(A_t(f)\right)_B\right|\leq \left[\left(|A_t(f)|^2\right)_B\right]^{1/2} \leq \sup_{|z|\leq t} \left(\frac{1}{\tau_z (B)}\int_{\tau_z(B)}|f(x)|^2 dx\right)^{1/2}.
\end{equation}
Combining these two estimates  and  $f\in \mathcal{B}_{\L}$, we can verify  directly $\widetilde{\gamma}_1(A_t (f))=\widetilde{\gamma}_2(A_t (f))=\widetilde{\gamma}_3(A_t (f))=0$.
If $r_0\geq \max\{a, \rho(c_0)\}$ and $a>>1$, then  $\tau_z (B)\subseteq 2B$ for any $|z|<1$. Thus, for any $0<t<1$, one has
\begin{align*}
\widetilde{\gamma}_4(A_t (f))&\leq\lim\limits_{a\to \infty}\sup_{r_0\geq \max\{a, \rho(c_0)\}}\sup_{|z|\leq t} \left(\frac{1}{|\tau_z (B)|}\int_{\tau_z(B)} |f(x)|^2 dx \right)^{1/2}\\
&\leq 2^{n/2}\lim\limits_{a\to \infty}\sup_{r_0\geq \max\{a, \rho(c_0)\}} \left(\frac{1}{|2B|}\int_{2B} |f(x)|^2 dx \right)^{1/2}=0.
\end{align*}
However, we may not obtain $\widetilde{\gamma}_5(A_t (f))=0$.

\bigskip

(ii).  Now we start to prove estimates \eqref{eqn:BMO-approx-a} and \eqref{eqn:BMO-approx-b}.
Note that for any  $\varepsilon>0$, it follows from  $\widetilde{\gamma}_i(f)=0$ for $1\leq i\leq 5$ that there exist positive constants $\delta<<1$ and $R>>1$ such that
 \begin{subequations}
\begin{equation}\label{eqn:BMO1-approx-a}
   \sup_{B:\, r_B\leq \delta} \left(\frac{1}{|B|}\int_B |f(x)-f_B|^2 dx\right)^{1/2}<{\varepsilon},
\end{equation}
\begin{equation}\label{eqn:BMO1-approx-b}
    \sup_{B:\, r_B\geq  R} \left(\frac{1}{|B|}\int_B |f(x)-f_B|^2 dx\right)^{1/2}<{\varepsilon},
\end{equation}
\begin{equation}\label{eqn:BMO1-approx-c}
    \sup_{B:\, B\subseteq (B(0,R))^c} \left(\frac{1}{|B|}\int_B |f(x)-f_B|^2 dx\right)^{1/2}<{\varepsilon},
\end{equation}
\begin{equation}\label{eqn:BMO1-approx-d}
    \sup_{B=B(x_B,r_B):\, r_B\geq  \max\{R,\,\rho(x_B)\}} \left(\frac{1}{|B|}\int_B |f(x)|^2 dx\right)^{1/2}<\varepsilon,
\end{equation}
and
\begin{equation}\label{eqn:BMO1-approx-e}
    \sup_{B=B(x_B,r_B):\, B\subseteq (B(0,R))^c,\, r_B\geq \rho(x_B)} \left(\frac{1}{|B|}\int_B |f(x) |^2 dx\right)^{1/2}<{\varepsilon}.
\end{equation}
 \end{subequations}

 \smallskip

 We start by proving \eqref{eqn:BMO-approx-a}.  For any fixed ball $B_0:=B(x_0,r_0)$, let us consider the following cases.

\smallskip

 \noindent{\it Case 1}. $r_0\leq \delta$. In this case, one may apply \eqref{eqn:aux-approx-1} and \eqref{eqn:BMO1-approx-a} that for any $0<t<1$,
\begin{align*}
    \left(\frac{1}{|B_0|}\int_{B_0} \left|A_t(f)(x)-f(x)-\left(A_t(f)-f\right)_{B_0}\right|^2 dx\right)^{1/2}
  \leq 2\sup_{|z|\leq t}  \left(\frac{1}{|\tau_z(B_0)|}\int_{\tau_z(B_0)} \left|f(x)-f_{\tau_z(B_0)}\right|^2 dx\right)^{1/2}<2\varepsilon.
\end{align*}

\smallskip

\noindent{\it Case 2}. $r_0>\delta$ and $B_0\cap B(0,2R)\neq \emptyset$. In this case, we just need to consider two subcases as follows.

\smallskip

{\it Subcase 2-1}. $\delta<r_0<R$. In this subcase, $B_0\subseteq  B(0,4R)$ and so
\begin{align*}
   \left(\frac{1}{|B_0|}\int_{B_0} \left|A_t(f)(x)-f(x)-\left(A_t(f)-f\right)_{B_0}\right|^2 dx\right)^{1/2}& \leq 2   \left(\frac{1}{|B_0|}\int_{B_0} \left|A_t(f)(x)-f(x)\right|^2 dx\right)^{1/2}\\
 &\leq \frac{2}{\delta^n} \|A_t(f)-f\|_{L^2(B(0,4R))}.
\end{align*}

Notice that $f\in L_{\rm loc}^2(\mathbb{R}^n)$ and $\big\{\phi_t\big\}_{0<t<1}$ is an approximate identity as $t\to 0$, there exists a constant $t_\varepsilon>0$ small enough such that
\begin{equation}\label{eqn:condition-t-1}
     \|A_t(f)-f\|_{L^2(B(0,4R))}< \frac{\delta^n \varepsilon}{2}\quad {\rm for} \quad 0<t<t_\varepsilon.
\end{equation}
From the above,  we have
$$
     \left(\frac{1}{|B_0|}\int_{B_0} \left|A_t(f)(x)-f(x)-\left(A_t(f)-f\right)_{B_0}\right|^2 dx\right)^{1/2} <\varepsilon \quad {\rm for} \quad 0<t<t_\varepsilon.
$$

\smallskip

{\it Subcase 2-2}. $r_0>R$.  It follows from \eqref{eqn:aux-approx-1} and \eqref{eqn:BMO1-approx-b} that
\begin{align*}
  \left(\frac{1}{|B_0|}\int_{B_0} \left|A_t(f)(x)-f(x)-\left(A_t(f)-f\right)_{B_0}\right|^2 dx\right)^{1/2}
  & \leq 2\sup_{|z|\leq t}  \left(\frac{1}{|\tau_z(B_0)|}\int_{\tau_z(B_0)} \left|f(x)-f_{\tau_z(B_0)}\right|^2 dx\right)^{1/2}\\
  & \leq 2 \sup_{B:\, r_B\geq R} \left(\frac{1}{|B|}\int_B |f(x)-f_B|^2 dx\right)^{1/2}<2\varepsilon.
\end{align*}

\smallskip

\noindent{\it Case 3}.  $r_0>\delta$ and $B_0\cap B(0,2R)=\emptyset$. In this case, it's clear $\tau_z(B_0)\subseteq (B(0,R))^c$ for $0<t<1$ since $R>0$ is sufficiently large. This, combined with \eqref{eqn:aux-approx-1} and \eqref{eqn:BMO1-approx-c}, deduces that
 \begin{align*}
  \left(\frac{1}{|B_0|}\int_{B_0} \left|A_t(f)(x)-f(x)-\left(A_t(f)-f\right)_{B_0}\right|^2 dx\right)^{1/2}
  & \leq 2\sup_{|z|\leq t}  \left(\frac{1}{|\tau_z(B_0)|}\int_{\tau_z(B_0)} \left|f(x)-f_{\tau_z(B_0)}\right|^2 dx\right)^{1/2}\\
  & \hskip -2cm \leq 2 \sup_{B:\, B\subseteq (B(0,R))^c} \left(\frac{1}{|B|}\int_B |f(x)-f_B|^2 dx\right)^{1/2}<2\varepsilon.
\end{align*}

Combining estimates in Cases 1-3, we obtain \eqref{eqn:BMO-approx-a}, as desired.

\smallskip

It remains to  verify the estimate \eqref{eqn:BMO-approx-b}. Let $\varepsilon,\, R$ be constants in \eqref{eqn:BMO1-approx-a} -- \eqref{eqn:BMO1-approx-e}.
For any $B_0=B(x_0,r_0)$ satisfying $r_0\geq \rho(x_0)$, consider the following cases.

\smallskip

{\it Case I}. $r_0\geq R$, i.e., $r_0\geq \max\{R, \, \rho(x_0)\}$. In this case,  $\tau_z (B_0)\subseteq 2B_0$ for any $|z|<1$.
This, together with \eqref{eqn:aux-approx-2} and \eqref{eqn:BMO1-approx-d}, deduces that
\begin{align*}
   \left(\frac{1}{|B_0|}\int_{B_0} \left|A_t(f)(x)-f(x) \right|^2 dx\right)^{1/2}&\leq 2\sup_{|z|\leq 1} \left(\frac{1}{|\tau_z (B_0)|}\int_{\tau_z (B_0)}|f(x)|^2\right)^{1/2} \\
   &\leq  2^{n+1}\sup_{|z|\leq 1} \left(\frac{1}{|2B_0|}\int_{2 B_0}|f(x)|^2\right)^{1/2}\\
   &\leq 2^{n+1}\sup_{B(x_B,r_B):\, r_B\geq \max\{R, \, \rho(x_B)\}}  \left(|B|^{-1} \int_{B}\left|f(x) \right|^{2} d x\right)^{1 / 2}\\
   &\lesssim\varepsilon.
\end{align*}

\smallskip

{\it Case II}. $\rho(x_0)\leq  r_0< R$. We need to consider the position of $B_0$.

\smallskip

{\it Subcase II-1}.    $B(x_0,r_0)\cap B(0,2(c+1)R)\neq \emptyset$. Then $B_0\subseteq B(0,2(c+2)R)$ due to $r_0\leq R$. Besides, by Lemma~\ref{lem:size-rho},
$$
     r_0>\rho(x_0)\geq c^{-1}\left\{1+\frac{|x_0|}{\rho(0)}\right\}^{-k_0}\rho(0)\geq c^{-1}\left\{1+\frac{(2c+4)R}{\rho(0)}\right\}^{-k_0}\rho(0):=C_{R,\,\rho(0)}.
$$
Note that there exists a constant $\widetilde{t_\varepsilon}>0$ small enough such that
\begin{equation}\label{eqn:condition-t-2}
     \|A_t(f)-f\|_{L^2(B(0,(2c+4)R))}<  (C_{R,\,\rho(0)})^n \varepsilon \quad {\rm for} \quad 0<t<\widetilde{t_\varepsilon}.
\end{equation}
Hence,
$$
   \left(\frac{1}{|B_0|}\int_{B_0} \left|A_t(f)(x)-f(x)\right|^2 dx\right)^{1/2}
 \leq \frac{1}{{r_0}^n} \|A_t(f)-f\|_{L^2(B(0,(2c+4)R))}<\varepsilon \quad {\rm for} \quad 0<t<\widetilde{t_\varepsilon}.
$$

\smallskip

{\it Subcase II-2}.  $B(x_0,r_0)\cap B(0,2(c+1)R)= \emptyset$.
\begin{itemize}
  \item If $\rho(x_0)\geq t$, then it follows from Lemma~\ref{lem:size-rho} again to see that for any $|z|\leq t$,
    \begin{align*}
  \rho(x_0-z)\leq c\left (1+\frac{|z|}{\rho(x_0)}\right )^{k_0/(k_0+1)} \rho(x_0)\leq c\left(t+\rho(x_0)\right)\leq 2cr_0.
   \end{align*}
This, combined with the fact $2c\cdot \tau_z(B_0)\subseteq (B(0,R))^c$, allows us to apply \eqref{eqn:aux-approx-2} and
\eqref{eqn:BMO1-approx-e} to obtain
\begin{align*}
   \left(\frac{1}{|B_0|}\int_{B_0} \left|A_t(f)(x)-f(x) \right|^2 dx\right)^{1/2} &\leq 2\sup_{|z|\leq t} \left(\frac{1}{|\tau_z (B_0)|}\int_{2c\cdot \tau_z (B_0)}|f(x)|^2\right)^{1/2}\\
   &\leq 2 (2c)^n\sup_{B(x_B,r_B):\, B\subseteq (B(0,R))^c,r_B\geq \rho(x_B)}  \left(\frac{1}{|B|} \int_{B}\left|f(x) \right|^{2} d x\right)^{1 / 2}\lesssim\varepsilon.
\end{align*}
  \item Consider $\rho(x_0)<t$.  For any $|z|<t$, it holds
$
  \rho(x_0-z)\leq c{\rho(x_0)}^{\frac{1}{k_0+1}}\left(\rho(x_0)+t\right)^{\frac{k_0}{k_0+1}}.
$
Compared to  $t$, $\rho(x_0)$ may be much smaller. So it  fails to bound $\rho(x_0-z)$ for $|z|\leq t$ by  $C\cdot\rho(x_0)$. This is  why  the second term of RHS of \eqref{eqn:BMO-approx-b} appears.
\end{itemize}

From the above,  (ii) is  proved.

\medskip

(iii). It is a direct corollary of (i) and (ii).
\end{proof}

\begin{remark}
Consider $\mathcal{L}=-\Delta+1$. In this case  $\rho(x)$ is constant. By \eqref{eqn:aux-approx-1} and \eqref{eqn:aux-approx-2}, it can be seen that $f\in  {\mathcal B}_{\mathcal L}$ implies $A_t(f)\in {\mathcal B}_{\L}$.  Moreover,  $\lim\limits_{t\to 0}\|A_t(f)-f\|_{{\rm BMO}_{\mathcal L}}=0$ follows from \eqref{eqn:BMO-approx-b}. However, for $\mathcal{L}=-\Delta+V(x)$, in order to obtain the same result,  the additional condition that $f$ has compact support is needed.
\end{remark}

\smallskip

 Lemma~\ref{lem:CMO-identity-approx}  hints that before smoothing $f\in \mathcal{B}_{\L}$,  some data pre-processing  should be considered.  Recall that Uchiyama \cite[pp. 166-167]{U} gave an explicit construction to approximate  a  function in $\mathcal{B}$ satisfying \eqref{eqn:U-CMO-a} -- \eqref{eqn:U-CMO-c} by step functions.   We will use a modified Uchiyama's construction to approximate a  given function in $\mathcal{B}_\L$ by step functions with compact supports,  which   relies heavily on  the properties of the function $\rho$.  This result is useful for proving the aimed Theorem~C.

\smallskip

Let $Q:=Q(c_Q,\ell(Q))\subseteq \mathbb{R}^n$ be a cube of center $c_Q$ and
sidelength $\ell(Q)$. For any constant $c>0$,  denote $cQ:=Q(c_Q,c\ell(Q))$.
Observe the following facts:	for each ball $B\subseteq \mathbb{R}^n$, there exists a cube $Q\subseteq \mathbb{R}^n$ satisfying $B\supseteq  Q$ and $B\subseteq \sqrt{n}Q$. Furthermore,	 there exists a constant $C=C(n)>1$ independent of $B$ and $f$ such that
$$
	 \frac{1}{C}\left(\frac{1}{|Q|}\int_Q |f(x)-f_Q|^2  dx\right)^{1/2}  \leq \left(\frac{1}{|B|}\int_B |f(x)-f_B|^2  dx\right)^{1/2}   \leq  C \left(\frac{1}{|\sqrt{n}Q|}\int_{\sqrt{n}Q} |f(x)-f_{\sqrt{n}Q}|^2  dx\right)^{1/2}.
$$

Hence we can substitute cubes for balls (simultaneously replacing $r_B$ and $f_B$ by $\ell(Q)$ and $f_Q$, respectively) in the definitions of $\widetilde{\gamma}_i(f)$ for $f\in \mathcal{B}_{\L}$, where $1\leq i\leq 5$.

Therefore, for any given $f\in \mathcal{B}_{\L}$ and $\varepsilon>0$,  it follows from $\widetilde{\gamma}_1(f)=0$ and $\widetilde{\gamma}_i(f)=0$  ($2\leq i\leq 5$) respectively to see
there exist two integers $I_\varepsilon >>1$ and $J_\varepsilon>>1$ such that

 \begin{subequations}
\begin{equation}\label{eqn:CMO-a}
   \sup_{Q:\, \ell(Q)\leq 2^{-I_\varepsilon}} \left(\frac{1}{|Q|}\int_Q |f(x)-f_Q|^2 dx \right)^{1/2}<\frac{\varepsilon}{5\cdot 4^{n}},
\end{equation}
\begin{equation}\label{eqn:CMO-b}
    \sup_{Q:\, \ell(Q)\geq  2^{J_\varepsilon}} \left(\frac{1}{|Q|}\int_Q |f(x)-f_Q|^2 dx\right)^{1/2}<\frac{\varepsilon}{5\cdot 4^n},
\end{equation}
\begin{equation}\label{eqn:CMO-c}
    \sup_{Q:\, Q\subseteq (Q(0, 2^{J_\varepsilon +1}))^c} \left(\frac{1}{|Q|}\int_Q |f(x)-f_Q|^2 dx\right)^{1/2}<\frac{\varepsilon}{5\cdot 4^n},
\end{equation}
\begin{equation}\label{eqn:CMO-d}
    \sup_{Q:\, \ell(Q)\geq  \max\{2^{J_\varepsilon},\,\rho(c_Q)\}} \left(\frac{1}{|Q|}\int_Q |f(x)|^2 dx\right)^{1/2}<\frac{\varepsilon}{2},
\end{equation}
and
\begin{equation}\label{eqn:CMO-e}
    \sup_{ Q\subseteq (Q(0, 2^{J_\varepsilon+1}))^c,\, \ell(Q)\geq \rho(c_Q)} \left(\frac{1}{|Q|}\int_Q |f(x) |^2 dx\right)^{1/2}<\frac{\varepsilon}{2}.
\end{equation}
 \end{subequations}

Note that
\begin{equation}\label{eqn:dyadic-cube-k}
 \mathcal{R}_k:=Q(0,2^{k+1})=\left\{x=(x_1,\ldots,x_n)\in \mathbb{R}^n:\, |x_i|\leq 2^k \   {\rm for }\  1\leq i\leq n\right\}
\end{equation}
is a union of $2^n$ dyadic cubes of sidelength $2^k$ for each $k\in \mathbb{Z}$.  Besides, the set $\mathcal{R}_{k+1}\setminus \mathcal{R}_k$ can be divided into mutually disjoint dyadic cubes with sidelength $2^{l}$ for any fixed $l\leq k$. These, combined with properties \eqref{eqn:CMO-a} -- \eqref{eqn:CMO-e}, motivate us to give the following construction which is partly adapted from \cite{U}: throughout this proof, whenever we mention $Q_x$ for $x\in \mathbb{R}^n$, it always denotes the unique dyadic cube that contains $x$ as follows.
\begin{itemize}
	\item for $x\in \mathcal{R}_{J_\varepsilon}$, let $Q_x$ be the dyadic cube of sidelength $2^{-I_\varepsilon-2}$ that contains $x$;
	\item for $x\in \mathcal{R}_{m+1}\setminus \mathcal{R}_{m}$ whenever the integer $m\geq J_\varepsilon$, let $Q_x$ be the dyadic cube of sidelength $2^{m-I_\varepsilon-J_\varepsilon-1}$ that contains $x$.
\end{itemize}
Now  define
\begin{equation}\label{def:average-type-II}
     \mathcal{A}_{\varepsilon}(f)(x):=f_{Q_x}=\frac{1}{|Q_x|}\int_{Q_x} f(y)\, dy ,\  \   x\in \mathbb{R}^n.
\end{equation}
which,  together with  \eqref{eqn:CMO-a} and \eqref{eqn:CMO-c}, implies that
\begin{equation}\label{eqn:aux-local-average-bound}
        \left( \frac{1}{|Q_x|}\int_{Q_x}\left|f(y)-\mathcal{A}_\varepsilon(f)(y)\right|^2 dy\right)^{1/2}\leq \frac{\varepsilon}{5\cdot 4^n}   \quad {\rm for  \    all}\   \   Q_x\subseteq \mathbb{R}^n.
\end{equation}

\medskip

\begin{lemma}\label{lem:approx-type-II}
Suppose $V\in {\rm RH}_q$ for some $q> n/2$.
 Let $f\in \mathcal{B}_{\L}$, where $\mathcal{B}_{\L}$ is the space defined in Theorem~C.  For any $\varepsilon>0$, let  $I_\varepsilon$ and $J_\varepsilon$ be given in \eqref{eqn:CMO-a} -- \eqref{eqn:CMO-e}, and let $\mathcal{A}_{\varepsilon}(f)$ be the function defined in \eqref{def:average-type-II}.  Then
\begin{equation}\label{eqn:approx-type-II-0}
    \|f-\mathcal{A}_{\varepsilon}(f)\|_{\rm BMO}\lesssim \varepsilon.
\end{equation}
In addition,  assume that $I_\varepsilon$ is sufficiently large such that  $2^{-I_\varepsilon-1}\leq \inf\limits_{x\in \mathcal{R}_{J_\varepsilon+2}}\rho(x)$.  We have
\begin{align}\label{eqn:approx-type-II}
     \|f-\mathcal{A}_{\varepsilon}(f)\|_{{\rm BMO}_{\L}}\lesssim \varepsilon.
\end{align}
\end{lemma}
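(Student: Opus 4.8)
The plan is to derive both estimates from the single pointwise fact recorded in \eqref{eqn:aux-local-average-bound} --- that on every cube $Q$ of the partition $\{Q_x\}$ one has $\big(|Q|^{-1}\int_Q|f-\mathcal{A}_\varepsilon(f)|^2\big)^{1/2}\le\varepsilon/(5\cdot4^n)$ --- together with two structural features of the partition: every $Q_x$ with $x\notin\mathcal{R}_{J_\varepsilon}$ is contained in $(Q(0,2^{J_\varepsilon+1}))^c=(\mathcal{R}_{J_\varepsilon})^c$, while every $Q_x$ with $x\in\mathcal{R}_{J_\varepsilon}$ has sidelength $2^{-I_\varepsilon-2}\le 2^{-I_\varepsilon}$. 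Throughout I use that it suffices to bound $L^2$ means (by John--Nirenberg for $\|\cdot\|_{\rm BMO}$, and by Theorem~\ref{thm:Carleson-BMO}(iii) with $p=2$ for $\|\cdot\|_{{\rm BMO}_{\L}}$), so that with $g:=f-\mathcal{A}_\varepsilon(f)$ it is enough to prove $\sup_B\big(|B|^{-1}\int_B|g-g_B|^2\big)^{1/2}\lesssim\varepsilon$ for \eqref{eqn:approx-type-II-0}, and additionally $\sup_{B:\,r_B\ge\rho(x_B)}\big(|B|^{-1}\int_B|g|^2\big)^{1/2}\lesssim\varepsilon$ for \eqref{eqn:approx-type-II}; one passes freely between balls and cubes as noted before \eqref{eqn:CMO-a}.

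For \eqref{eqn:approx-type-II-0} I run Uchiyama's dichotomy on the test cube $B$. If every partition cube meeting $B$ has sidelength at most a fixed multiple of $\ell(B)$, these cubes are disjoint and lie in $CB$, so $\sum_i|Q_i|\lesssim|B|$; since $g=f-f_{Q_x}$ on $Q_x$, we get $\int_B|g|^2\le\sum_i\int_{Q_i}|f-f_{Q_i}|^2\lesssim\varepsilon^2|B|$ by the workhorse. Otherwise $B$ meets only boundedly many ``large'' partition cubes $Q_1,\dots,Q_l$ (dyadic, mutually comparable, of sidelength $>\ell(B)$) together with negligibly many smaller ones; choosing a cube $Q^*\supseteq B\cup\bigcup_jQ_j$ with $\ell(Q^*)\approx\max_j\ell(Q_j)$ and using the structural features, $Q^*$ either has sidelength $\lesssim 2^{-I_\varepsilon}$ or lies in $(Q(0,2^{J_\varepsilon+1}))^c$, so $\big(|Q^*|^{-1}\int_{Q^*}|f-f_{Q^*}|^2\big)^{1/2}\lesssim\varepsilon$ by \eqref{eqn:CMO-a} or \eqref{eqn:CMO-c}. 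This puts all $f_{Q_j}$ within $O(\varepsilon)$ of $f_{Q^*}$, so with $c=f_B-f_{Q^*}$ we obtain $|B|^{-1}\int_B|g-g_B|^2\le|B|^{-1}\int_B|g-c|^2\lesssim|B|^{-1}\int_B|f-f_B|^2+\varepsilon^2$, and the surviving term is $\lesssim\varepsilon^2$ by \eqref{eqn:CMO-a} or \eqref{eqn:CMO-c} applied to $B$ itself (which in this case has small sidelength, resp.\ lies far from the origin).

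The heart of the matter is \eqref{eqn:approx-type-II}, i.e.\ the bound for $\big(|B_0|^{-1}\int_{B_0}|g|^2\big)^{1/2}$ over balls $B_0=B(x_0,r_0)$ with $r_0\ge\rho(x_0)$; here the hypothesis $2^{-I_\varepsilon-1}\le\inf_{\mathcal{R}_{J_\varepsilon+2}}\rho$ is used essentially, through Lemma~\ref{lem:size-rho}. Classify each partition cube as $\rho$-\emph{admissible}, meaning $\ell(Q)\le\kappa\rho(c_Q)$ for a small absolute constant $\kappa$ (depending only on $n$ and the constants of Lemma~\ref{lem:size-rho}), in which case $\rho$ is comparable to $\rho(c_Q)$ and $\gtrsim\ell(Q)$ on a fixed neighbourhood of $Q$; otherwise $Q$ is $\rho$-\emph{large}. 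The hypothesis (enlarging $I_\varepsilon$ by an absolute amount if necessary) forces the partition cubes lying inside $\mathcal{R}_{J_\varepsilon+1}$ to be $\rho$-admissible, whence every $\rho$-large partition cube lies in $(Q(0,2^{J_\varepsilon+1}))^c$, so by \eqref{eqn:CMO-e} --- applied to a fixed dilate and combined with \eqref{eqn:CMO-c} to pass between the dilate and $Q$ --- its average satisfies $|f_Q|\lesssim\varepsilon$. Fix now $B_0$ with $r_0\ge\rho(x_0)$. Any partition cube $Q$ meeting $B_0$ with $\ell(Q)>r_0$ must be $\rho$-large: were it admissible, then $x_0$ would lie within $\lesssim\ell(Q)\le\kappa\rho(c_Q)$ of $c_Q$, hence $\rho(x_0)\approx\rho(c_Q)\gtrsim\kappa^{-1}\ell(Q)>\kappa^{-1}r_0$ by Lemma~\ref{lem:size-rho}, which for $\kappa$ small enough contradicts $r_0\ge\rho(x_0)$. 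Consequently the admissible cubes meeting $B_0$ all have sidelength $\le r_0$ and contribute $\lesssim\varepsilon^2|B_0|$ exactly as in the disjoint-cubes case above; while on each $\rho$-large cube $Q$ meeting $B_0$ one has $g=f-f_Q$ with $|f_Q|\lesssim\varepsilon$, so these cubes contribute $\lesssim\int_{B_0}|f|^2+\varepsilon^2|B_0|$, and $\int_{B_0}|f|^2\lesssim\varepsilon^2|B_0|$ holds by \eqref{eqn:CMO-d} if $r_0\ge 2^{J_\varepsilon}$, by \eqref{eqn:CMO-e} if $B_0\subseteq(Q(0,2^{J_\varepsilon+1}))^c$, and in the one remaining case --- $r_0<2^{J_\varepsilon}$ and $B_0\cap Q(0,2^{J_\varepsilon+1})\ne\emptyset$, so $B_0$ lies in the fixed box $Q(0,2^{J_\varepsilon+2})$, where $r_0\ge\rho(x_0)\ge 2^{-I_\varepsilon-1}$ dominates all local partition-cube sidelengths --- one simply sums local oscillations again. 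Combining, $\big(|B_0|^{-1}\int_{B_0}|g|^2\big)^{1/2}\lesssim\varepsilon$, which with \eqref{eqn:approx-type-II-0} yields \eqref{eqn:approx-type-II}.

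The main obstacle is precisely the configuration handled in the last paragraph: without quantitative control on $I_\varepsilon$ a ball $B_0$ with $r_0\ge\rho(x_0)$ --- but $\rho(x_0)$ arbitrarily small --- could sit deep inside a single large partition cube $Q$ whose average $f_Q$ carries no smallness, and naively telescoping $f_{B_0}$ up to $f_Q$ loses a factor growing like $I_\varepsilon+J_\varepsilon$. The hypothesis $2^{-I_\varepsilon-1}\le\inf_{\mathcal{R}_{J_\varepsilon+2}}\rho$ together with the slow variation of $\rho$ (Lemma~\ref{lem:size-rho}) is exactly what rules this out, forcing any such $Q$ to be $\rho$-large and far from the origin so that \eqref{eqn:CMO-d}--\eqref{eqn:CMO-e} bound $|f_Q|$ outright; the bookkeeping of the admissibility constant $\kappa$ against Lemma~\ref{lem:size-rho} is the genuinely delicate part. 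A milder point, already present in \eqref{eqn:approx-type-II-0}, is that partition-cube sidelengths need not land in the ``small'' regime of $\widetilde{\gamma}_1$ nor the ``large'' regime of $\widetilde{\gamma}_2$--$\widetilde{\gamma}_4$; this is absorbed by the observation that every partition cube outside $\mathcal{R}_{J_\varepsilon}$ automatically lies in $(Q(0,2^{J_\varepsilon+1}))^c$, so \eqref{eqn:CMO-c} and \eqref{eqn:CMO-e} cover all intermediate scales.
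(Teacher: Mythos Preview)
Your proposal is correct and follows essentially the same strategy as the paper, with differences that are organisational rather than substantive. Both arguments rest on the local oscillation bound \eqref{eqn:aux-local-average-bound}, run the same dichotomy on the test cube for \eqref{eqn:approx-type-II-0} (test cube large relative to the partition cubes it meets versus small), and for \eqref{eqn:approx-type-II} both use Lemma~\ref{lem:size-rho} together with the hypothesis on $I_\varepsilon$ to force, in the delicate case, the relevant partition cubes to be far from the origin with $\ell(Q_x)\gtrsim\rho(c_{Q_x})$, so that \eqref{eqn:CMO-e} controls $|f_{Q_x}|$. The paper packages the ``near-origin $\Rightarrow$ partition cubes have small sidelength'' step as the single observation $Q\cap\mathcal{R}_{J_\varepsilon+1}=\emptyset$ and states two preliminary properties \eqref{P1}--\eqref{P2} up front (adjacent partition cubes have averages within~$\varepsilon$), whereas you introduce the $\rho$-admissible/$\rho$-large vocabulary and argue via an enclosing cube $Q^*$ in place of \eqref{P2}; these are equivalent.

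One wording slip: in your ``one remaining case'' for \eqref{eqn:approx-type-II} you write that $\int_{B_0}|f|^2\lesssim\varepsilon^2|B_0|$ follows by summing local oscillations, but summing oscillations controls $\int_{B_0}|g|^2$, not $\int_{B_0}|f|^2$. The fix is exactly what you intend: in that case $B_0\subseteq\mathcal{R}_{J_\varepsilon+2}$ and $r_0\ge\rho(x_0)\ge 2^{-I_\varepsilon-1}$, so every partition cube meeting $B_0$ has sidelength $\le 2^{-I_\varepsilon}\le 2r_0$, and the direct summation gives $\int_{B_0}|g|^2\lesssim\varepsilon^2|B_0|$ outright, bypassing the admissible/large split. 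This is precisely how the paper's Case~II handles it.
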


\begin{remark}\label{rem:workable}
We note that the assumption  $2^{-I_\varepsilon-1}\leq \inf\limits_{x\in \mathcal{R}_{J_\varepsilon+2}}\rho(x)$ is workable and non-contradictory. Indeed, using Lemma~~\ref{lem:size-rho},   it holds
$$
  \rho(x)\geq c^{-1}\Big(1+\frac{|x|}{\rho(0)}\Big)^{-k_0}\rho(0)\geq c^{-1}\Big(1+\frac{\sqrt{n}2^{J_\varepsilon+2}}{\rho(0)}\Big)^{-k_0}\rho(0), \ \ {\rm for \ any} \ x\in \mathcal{R}_{J_\varepsilon+2}.
$$
 Taking $2^{-I_\varepsilon-1}\leq c^{-1}\Big(1+\frac{\sqrt{n}2^{J_\varepsilon+2}}{\rho(0)}\Big)^{-k_0}\rho(0)$, yields the desired assumption.
\end{remark}

\begin{proof} [Proof of Lemma \ref{lem:approx-type-II}]
Firstly, we claim that  $\mathcal{A}_{\varepsilon}(f)$ has the following two  properties: there exists a positive integer $M_\varepsilon\gtrsim  I_\varepsilon+J_\varepsilon$ such that
	\begin{equation}\tag{\textbf{P1}}\label{P1}
	 \sup_{x\in \mathbb{R}^n\setminus \mathcal{R}_{M_{\varepsilon}}} |\mathcal{A}_\varepsilon(f)(x)|<\varepsilon/2,
	 \end{equation}
	and
\begin{equation}\tag{\textbf{P2}}\label{P2}
     \sup\left\{ \big|\mathcal{A}_{\varepsilon}(f)(x)-\mathcal{A}_{\varepsilon}(f)(y)\big|:\    \overline{Q_x}\cap \overline{Q_y}\neq \emptyset \right\} <\varepsilon,
\end{equation}
where $\overline{Q}$ is the closure of $Q$ in $\mathbb{R}^n$.
	
\smallskip

Let us prove the  above claim.    It follows from Lemma~\ref{lem:size-rho}  that for any $x\in \mathcal{R}_{m+1}\setminus \mathcal{R}_{m}$ with $m\geq  J_\varepsilon$,
$$
    \rho(x)\leq c \left\{1+\frac{|x|}{\rho(0)}\right\}^{\frac{k_0}{k_0 +1}} \rho(0)\leq C \cdot 2^{\frac{k_0}{k_0+1}m},
$$
where $C$ is a constant dependent  on $\rho(0)$.  Meanwhile, $\ell(Q_{x})=2^{m-I_\varepsilon-J_\varepsilon-1}$,  so one has
$$
    \rho(y)\leq \ell(Q_{x}) \quad {\rm for \   all} \  \    y\in Q_x,
    \quad  {\rm if}\  \   m\geq (k_0+1)\left(\log_2 {C}+I_\varepsilon +J_\varepsilon+1\right)=:M_{\varepsilon}.
$$
In particular, denote the center of $Q_x$ by $c_{Q_x}$, then $\rho(c_{Q_x})\leq  \ell(Q_x)$. Hence,  \eqref{P1} is a straightforward consequence of \eqref{eqn:CMO-e}.

 Then we  turn to  \eqref{P2}.  Suppose $\overline{Q_x}\cap \overline{Q_y}\neq \emptyset$.  Let  $Q_{x,y}$ be the smallest cube  that contains $Q_x$ and $Q_y$, and we remind that $Q_{x,y}$ may not be a dyadic cube. Assume $\ell(Q_x)\leq \ell(Q_y)$, then  it follows from the definition of  dyadic cubes $\{Q_z\}_{z\in \mathbb{R}^n}$ that $\ell(Q_x)=\ell(Q_y)/2$ if $\ell(Q_x)\neq \ell(Q_y)$ and  $|Q_{x,y}|\leq 3^{n}|Q_x|$.   Note that
$$
 \left |\mathcal{A}_{\varepsilon}(f)(x)- \mathcal{A}_{\varepsilon}(f)(y)\right |\leq |f_{Q_x}-f_{Q_{x,y}} |+|f_{Q_y}-f_{Q_{x,y}} |\leq  \frac{2\cdot 3^{n}}{|Q_{x,y}|}\int_{Q_{x,y}} |f(z)-f_{Q_{x,y}}| \,dz.
$$
It suffices to show \eqref{P2} in the following two cases. In the case of  $x, y\in \mathcal{R}_{J_\varepsilon+2}$,  we have that $\ell(Q_x)\leq \ell(Q_y)\leq 2^{-I_\varepsilon}$ and $\ell(Q_{x,y})\leq 2^{-I_\varepsilon+1}$.  Then \eqref{P2} follows from \eqref{eqn:CMO-a}.  In the case of  $x,y\notin  \mathcal{R}_{J_\varepsilon+1}$, we have $Q_{x,y}\subseteq (R_{J_\varepsilon})^c=(Q(0,2^{J_\varepsilon +1}))^c$.   Then \eqref{P2}  follows from \eqref{eqn:CMO-c}.

\medskip

With \eqref{P1} and \eqref{P2} at our disposal,   we now  show \eqref{eqn:approx-type-II-0} and \eqref{eqn:approx-type-II}.

Denote by $Q:=Q(c_Q, \ell(Q))$  the cube in $\mathbb{ R}^n$. Let's prove \eqref{eqn:approx-type-II-0} by
  considering the following cases.

\smallskip

\noindent{\it Case I.}  $\ell(Q)<\frac{1}{8}\max\{\ell(Q_x):\, Q_x\cap Q\neq \emptyset\}$.

\smallskip

 By the construction of $\{Q_z\}_{z\in \mathbb{R}^n}$, it is not difficult to show  the fact:  if $Q_x\cap Q \neq \emptyset$, $Q_y\cap Q\neq \emptyset$ and $\ell(Q)<\frac{1}{8}\max\{\ell(Q_x):\, Q_x\cap Q\neq \emptyset\}$,  then
 \begin{align}\label{estimate-size}
 \ell(Q_x)/\ell(Q_y)\in \left\{\frac{1}{2},1,2\right\}  \quad   {\rm and}  \quad     \overline{Q_x}\cap \overline{Q_y}\neq \emptyset.
\end{align}

\smallskip

One can compute
\begin{align}\label{estimate-0}
     &\left(\frac{1}{|Q|} \int_Q \left|f(x)-\mathcal{A}_\varepsilon(f)(x)   -\left(f-\mathcal{A}_\varepsilon(f)\right)_Q\right|^2 d x\right)^{1/2}\nonumber\\
     &\leq \left(\frac{1}{|Q|} \int_Q \left|\mathcal{A}_\varepsilon(f)(x)-\left(\mathcal{A}_\varepsilon(f)\right)_Q\right|^2 d x\right)^{1/2}+\left(\frac{1}{|Q|} \int_Q \left|f(x)-f_Q\right|^2 d x\right)^{1/2}.
\end{align}
Note that $x,y\in Q$ implies $Q_x, Q_y\in \{Q_z: Q_z\cap Q\neq \emptyset\}$  and by \eqref{estimate-size}  we get   $\overline{Q_x}\cap \overline{Q_y}\neq \emptyset$.  Then one may  apply \eqref{P1} and \eqref{P2} to obtain
\begin{align}\label{estimate-1}
    \left(\frac{1}{|Q|} \int_Q \left|\mathcal{A}_\varepsilon(f)(x)   -\left(\mathcal{A}_\varepsilon(f)\right)_Q\right|^2 d x\right)^{1/2}&\leq
     \left(\frac{1}{|Q|^2} \int_Q\int_Q \left|\mathcal{A}_\varepsilon(f)(x)-\mathcal{A}_\varepsilon(f)(y)\right|^2 d yd x\right)^{1/2}
     \leq \varepsilon.
\end{align}

Consider $\left(\frac{1}{|Q|} \int_Q \left|f(x)-f_Q\right|^2 d x\right)^{1/2}$.
\begin{itemize}
	\item if $Q\cap \mathcal{R}_{J_\varepsilon}\neq \emptyset$, then $\ell(Q)< \frac{1}{8}\max\{\ell(Q_x):\, Q_x\cap Q\neq \emptyset\}\leq 2^{-I_\varepsilon-2}$ . This allows us to apply \eqref{eqn:CMO-a} to obtain
	
$$
     \left(\frac{1}{|Q|} \int_Q \left|f(x)-f_Q\right|^2 d x\right)^{1/2}<\frac{\varepsilon}{5\cdot 4^{n}};
$$

\smallskip

\item if $Q\cap \mathcal{R}_{J_\varepsilon}=\emptyset$, then it follows from \eqref{eqn:CMO-c} that
$$
     \left(\frac{1}{|Q|} \int_Q \left|f(x)-f_Q\right|^2 d x\right)^{1/2}<\frac{\varepsilon}{5\cdot 4^{n}}.
$$
\end{itemize}
This, combined with \eqref{estimate-1}, implies
$$
\left(\frac{1}{|Q|} \int_Q \left|f(x)-\mathcal{A}_\varepsilon(f)(x)   -\left(f(x)-\mathcal{A}_\varepsilon(f)\right)_Q\right|^2 d x\right)^{1/2}<2\varepsilon.
$$

\medskip

\noindent{\it Case II.}   $\ell(Q)\geq \frac{1}{8}\max\{\ell(Q_x):\, Q_x\cap Q\neq \emptyset\}$.

In this case $\bigcup_{Q_x\cap Q\neq \emptyset}Q_x \subseteq  20Q$.
By \eqref{eqn:aux-local-average-bound}, one can write
\begin{align}\label{estimate-Qbig}
\left(\frac{1}{|Q|} \int_Q \left|f-\mathcal{A}_\varepsilon(f)   -\left(f-\mathcal{A}_\varepsilon(f)\right)_Q\right|^2 \,dx\right)^{1/2}&\leq 2\left(\frac{1}{|Q|}\int_{Q} \big|f(y)-\mathcal{A}_\varepsilon(f)(y)\big|^2 dy\right)^{1/2}\nonumber\\
& \leq 2 \left(\sum_{Q_x:\, Q_x\cap Q\neq \emptyset}\frac{|Q_x|}{|Q|}  \frac{1}{|Q_x|} \int_{Q_x} \big|f(y)-f_{Q_x}\big|^2 dy\right)^{1/2}\\
& \leq 2 \left(\frac{\varepsilon}{5\cdot 4^n}\right) \left(\frac{|\bigcup_{Q_x\cap Q\neq \emptyset} Q_x|}{|Q|}\right)^{1/2}\leq \frac{ 20^{n/2}}{  4^{n}} {\varepsilon},\nonumber
\end{align}
as desired.

\smallskip

 Combining the two cases above, we obtain that $\|f-\mathcal{A_\varepsilon}(f)\|_{\rm BMO}\lesssim \varepsilon$.

 \bigskip

Lastly, we prove \eqref{eqn:approx-type-II}. It suffices to prove
\begin{align}\label{estimate-BMOL}
     \sup\limits_{Q:\ell(Q)\geq \rho(c_Q)}\left(\frac{1}{|Q(c_Q,\ell(Q))|}\int_{Q(c_Q,\ell(Q))} |f-\mathcal{A}_\varepsilon(f)|^2 dx\right)^{1/2}\lesssim \varepsilon.
\end{align}

 If  $\ell(Q)\geq \frac{1}{8} \max\{\ell(Q_x):\, Q_x\cap Q\neq \emptyset\}$,   we may  use \eqref{estimate-Qbig} to obtain
$$
     \left(\frac{1}{|Q(c_Q,\ell(Q))|}\int_{Q(c_Q,\ell(Q))} |f(x)-\mathcal{A}_\varepsilon(f)(x)|^2 dx\right)^{1/2}\lesssim \varepsilon.
$$

\smallskip

It remains to consider the case of  $\ell(Q)<\frac{1}{8} \max\{\ell(Q_x):\, Q_x\cap Q\neq \emptyset\}$ and  $\ell(Q)\geq \rho(c_Q)$.  It follows from \eqref{estimate-size} that $\ell(Q)\leq \ell(Q_x)$ whenever $Q_x\cap Q\neq \emptyset$.  We claim that there holds
\begin{align*}
Q\cap \mathcal{R}_{J_\varepsilon+1}=\emptyset.
\end{align*}
In fact,  if $Q\cap  \mathcal{R}_{J_\varepsilon+1}\neq \emptyset$, then $\ell(Q)\leq 2^{-I_\varepsilon -2}$ and  $Q\subseteq  \mathcal{R}_{J_\varepsilon+2}$.  The  assumption  that $I_\varepsilon$ is sufficiently large such that  $2^{-I_\varepsilon-1}\leq \min_{x\in \mathcal{R}_{J_\varepsilon+2}}\rho(x)$, gives  $\ell(Q)< \rho(c_Q)$, which contradicts our condition.

 If  $Q_x\cap Q\neq \emptyset$,  then
	 $|c_Q- c_{Q_x}|\leq \sqrt{n} \left(\ell(Q_x)+\ell(Q)\right)/2\leq \sqrt{n} \ell(Q_x).$
   By using Lemma~\ref{lem:size-rho} and $\rho(c_Q)\leq \ell(Q)\leq \ell(Q_x)$,  one can compute
\begin{align*}
     \rho(c_{Q_x})& \leq c\left(\frac{\rho(c_Q)+\sqrt{n}\ell(Q_x)}{\rho(c_Q)}\right)^{k_0/(k_0+1)}  \rho(c_Q)
    \leq c\left(\sqrt{n}+1\right) \ell(Q_x).
\end{align*}
Denote $C_1:=c\left(\sqrt{n}+1\right)$ and  $ Q_x^*:=Q(c_{Q_x}, C_1\ell(Q_x))$. Clearly,   $\rho( c_{Q_x^*})\leq \ell( Q_x^*)$. The fact $Q\cap \mathcal{R}_{J_\varepsilon+1}=\emptyset$ implies that $Q_x^*\subseteq (\mathcal{R}_{J_\varepsilon})^c$, if $J_\varepsilon$ is chosen large enough.    For any $x\in Q$,  we  have
\begin{align*}
	|\mathcal{A}_\varepsilon(f)(x)|=|f_{Q_x}| \leq (C_1)^{n/2} \left(\frac{1}{| Q_x^*|}\int_{ Q_x^*}|f(y)|^2 dy\right)^{1/2} \lesssim \varepsilon,
\end{align*}
where in the last inequality  we  used \eqref{eqn:CMO-e}.
Hence $\|\mathcal{A}_\varepsilon(f)\|_{L^\infty(Q)}\lesssim \varepsilon$.
This, combined with \eqref{eqn:CMO-e} and the fact  $Q\subseteq (\mathcal{R}_{J_\varepsilon})^c$ and $\ell(Q)\geq \rho(Q)$, gives that
$$
     \left(\frac{1}{|Q|}\int_{Q} |f(x)-\mathcal{A}_\varepsilon(f)(x)|^2 dx\right)^{1/2}\leq \left(\frac{1}{|Q|}\int_{Q} |f(x)|^2 dx\right)^{1/2} +\|\mathcal{A}_\varepsilon(f)\|_{L^\infty (Q)}\lesssim \varepsilon.
$$

The proof of  Lemma~\ref{lem:approx-type-II} is completed.
\end{proof}

\medskip

We are now in a position to show Theorem~C, by combining  Lemma~\ref{lem:CMO-identity-approx}  and Lemma~\ref{lem:approx-type-II}.

\bigskip

\begin{proof}[Proof of Theorem~C]
The proof follows from the sequence of implications
$$
(a) \Rightarrow  (b)\Rightarrow (c) \Rightarrow (a)  \qquad {\rm and} \qquad (c)\Leftrightarrow (d).
$$

 The implication (a) $\Rightarrow $ (b) follows directly  from   (ii) of Theorem~\ref{thm:CMO-dual-known} and the fact ${C_c^\infty}({\mathbb R}^n)\subset C_0({\mathbb R}^n)$.

\medskip

 {\it Proof of  ``(b) $\Rightarrow$ (c)''.}
We first show that  $f\in C_0(\mathbb{R}^n)\Rightarrow f\in \mathcal{B}_{\L}$.

Observe that $f\in C_0(\mathbb{R}^n)$ implies  $f\in {\rm BMO}_{\L}(\mathbb{R}^n)$  by the simple fact  $C_0\subseteq L^\infty \subseteq {\rm BMO}_{\L}$.

 Note that
  $$\lim_{a\to 0}\sup _{B: \,r_{B} \leq a}\left(|B|^{-1} \int_{B}\left|f(x)-f_{B}\right|^{2} d x\right)^{1 / 2} \leq \lim_{a\to 0}\sup_{x,y\in B:\, r_B\leq a} |f(x)-f(y)|,$$
which, together with the uniform continuity of $f\in C_0$,  gives $ \widetilde{\gamma}_1(f)=0$.

    \smallskip

 Since  $f\in C_0$, then for  any given $\varepsilon>0$,   there exists a constant $N_\varepsilon>0$   such that  $|f(x)|\leq \varepsilon$  whenever $|x|\geq N_\varepsilon$.
For every ball $B$ with $r_B>a$, where $a>0$ is sufficiently large,  one has
\begin{align}\label{eqn:f-square-average}
    \left(|B|^{-1} \int_{B}\left|f(x) \right|^{2} d x\right)^{1 / 2}&\leq   \frac{\|f\|_{L^2(B(0,N_\varepsilon)\cap B)}}{|B|^{1/2}}+\|f\|_{L^\infty(B(0,N_\varepsilon)^c)}\frac{|B\setminus B(0,N_\varepsilon)|^{1/2}}{|B|^{1/2}}\nonumber\\
    &\lesssim \frac{\|f\|_{L^\infty}{N_\varepsilon}^{n/2}}{a^{n/2}} +\varepsilon.
\end{align}
This says that for any given $\varepsilon>0$, there exists $a=a\left(f, \varepsilon\right)$ sufficiently large, such that
$$
      \sup_{B:\, r_B\geq a}  \left(\frac{1}{|B|} \int_{B}\left|f(x) \right|^{2} d x\right)^{1 / 2} <2\varepsilon,
$$
which yields
$$
     \widetilde{\gamma}_2(f)(x)=0  \qquad {\rm and} \qquad  \widetilde{\gamma}_4(f)=0.
$$

\smallskip

By using $\displaystyle \lim_{|x|\to\infty }f(x)=0$, we have
$$
      \lim_{a\to \infty}\sup_{B:\, B\subset B(0,a)^c}  \left(\frac{1}{|B|} \int_{B}\left|f(x) \right|^{2} d x\right)^{1 / 2} \leq \lim_{a\to \infty} \sup_{|x|\geq a} |f(x)|=0,
$$
which gives
$$
    \widetilde{\gamma}_3(f)=0   \quad  {\rm and } \quad  \widetilde{\gamma}_5(f)=0.
$$
Thus,  we have shown $C_0\subseteq \mathcal{B}_{\L}$, as desired.

\smallskip

To complete the proof of $\overline{C_0}^{{\rm BMO}_{\L}}\subseteq \mathcal{B}_{\L}$, it suffices to show that $\mathcal{B}_{\L}$ is closed in ${\rm BMO}_{\L}$.  Suppose that  $f\in {\rm BMO}_{\L}$ and  $f_k \in \mathcal{B}_{\L}, \ k\in \mathbb{N}$,  satisfying  $\displaystyle \lim_{k\to \infty} \|f_k -f\|_{{\rm BMO}_{\L}}=0$.  We will prove $f\in \mathcal{B}_{\L}$.

For any ball $B\subseteq \mathbb{R}^n$ and $k\in {\mathbb N}$, it follows from Theorem~\ref{thm:Carleson-BMO} that
$$
	 \left(\frac{1}{|B|}\int_B |f(x)-f_B|^2\,dx\right)^{1/2} \lesssim \|f-f_k\|_{{\rm BMO}_{\L}}+ \left(\frac{1}{|B|}\int_B |f_k(x)-(f_k)_B|^2 dx\right)^{1/2}
$$
and
$$
        \left(\frac{1}{|B|}\int_B |f(x)|dx\right)^{1/2} \lesssim \|f-f_k\|_{{\rm BMO}_{\L}}+ \left(\frac{1}{|B|}\int_B |f_k(x)|^2dx\right)^{1/2}.
$$
Hence, it follows from  $f_k\in \mathcal{B}_{\L}$ that
$ \widetilde{\gamma}_j(f)=0$    for   $ 1\leq j \leq5.$
It implies $f\in \mathcal{B}_{\L}$.  We  completed the proof of  ``(b) $\Rightarrow$ (c)''.

\bigskip

{\it Proof of ``(c) $\Rightarrow$ (a)''.}  Let $f\in \mathcal{B}_{\L}$.  We will show that  for any given $\varepsilon>0$,  there exists a function $F_\varepsilon \in C_c^\infty(\mathbb{R}^n)$ such that
\begin{equation}\label{eqn:finial-goal-varepsilon}
      \|f-F_\varepsilon \|_{{\rm BMO}_{\L}}\lesssim \varepsilon.
\end{equation}

Firstly, let $I_\varepsilon$, $J_\varepsilon$, $M_\varepsilon$, $\mathcal{R}_m$, $Q_x$ and $\mathcal{A}_{\varepsilon}(f)$ be as in the proof of  Lemma~\ref{lem:approx-type-II}.  By Remark \ref{rem:workable},  we can assume that $2^{-I_\varepsilon-1}\leq \min_{x\in \mathcal{R}_{J_\varepsilon+2}}\rho(x)$. Thus it follows from Lemma~\ref{lem:approx-type-II} that
$$
     \|f-\mathcal{A}_{\varepsilon}(f)\|_{{\rm BMO}_{\L}}\lesssim \varepsilon.
$$
This, together with \eqref{P1}, gives
\begin{align}\label{estimate-make support}
\|f-\mathcal{A}_{\varepsilon}(f)\chi_{\mathcal{R}_{M_\varepsilon+2}} \|_{{\rm BMO}_{\L}}&\leq \|f-\mathcal{A}_{\varepsilon}(f)\|_{{\rm BMO}_{\L}}+\|\mathcal{A}_{\varepsilon}(f)\chi_{\big(\mathcal{R}_{M_\varepsilon+2}\big)^c} \|_{{\rm BMO}_{\L}}\nonumber\\
&\lesssim \varepsilon+\|\mathcal{A}_{\varepsilon}(f)\chi_{\big(\mathcal{R}_{M_\varepsilon+2}\big)^c} \|_{L^\infty}\\
&\lesssim  \varepsilon. \nonumber
\end{align}
By Theorem~\ref{thm:Carleson-BMO}  again,
\begin{align*}
 &\left(\frac{1}{|B|}\int_B \Big|\mathcal{A}_{\varepsilon}(f)\chi_{\mathcal{R}_{M_\varepsilon+2}}(x)-\left(\mathcal{A}_{\varepsilon}(f)\chi_{\mathcal{R}_{M_\varepsilon+2}}\right)_B\Big|^2 dx\right)^{1/2}\\
 &\qquad\qquad\lesssim \|f-\mathcal{A}_{\varepsilon}(f)\chi_{\mathcal{R}_{M_\varepsilon+2}} \|_{{\rm BMO}_{\L}}+ \left(\frac{1}{|B|}\int_B |f(x)-\left(f\right)_B|^2 dx\right)^{1/2}
\end{align*}
and
\begin{align*}
 \left(\frac{1}{|B|}\int_B |\mathcal{A}_{\varepsilon}(f)\chi_{\mathcal{R}_{M_\varepsilon+2}}(x)|^2 dx\right)^{1/2}\lesssim \|f-\mathcal{A}_{\varepsilon}(f)\chi_{\mathcal{R}_{M_\varepsilon+2}} \|_{{\rm BMO}_{\L}}+ \left(\frac{1}{|B|}\int_B |f(x)|^2 dx\right)^{1/2}.
\end{align*}
These two estimates, together with \eqref{estimate-make support},  ensure that the  estimates \eqref{eqn:BMO1-approx-a}-\eqref{eqn:BMO1-approx-e} still hold with the parameter $R>2^{M_\varepsilon+4}$ whenever replacing $f$ by $\mathcal{A}_{\varepsilon}(f)\chi_{\mathcal{R}_{M_\varepsilon+2}}$ (the constants therein should be changed accordingly).  Recalling that $A_{t}$ is  defined in \eqref{eqn:approx-operator}, it is clear that   $A_{t}\left(\mathcal{A}_{\varepsilon}(f)\chi_{\mathcal{R}_{M_\varepsilon+2}}\right)\in C_c^\infty(\mathcal{R}_{M_\varepsilon+3})$ for any $t\leq 1$. It follows from  the proof of (ii) of Lemma~\ref{lem:CMO-identity-approx} that there exists $t_\varepsilon<<1$ sufficiently small  such that
\begin{align*}
    &\left\|\mathcal{A}_{\varepsilon}(f)\chi_{\mathcal{R}_{M_\varepsilon+2}}-A_{t_\varepsilon}\left(\mathcal{A}_{\varepsilon}(f)\chi_{\mathcal{R}_{M_\varepsilon+2}}\right)\right\|_{{\rm BMO}_{\L}}\\
    &\qquad\lesssim \varepsilon+ \sup_{B: B\subset {B(0,R)}^c}\sup_{|z|\leq 1}
   \left(\frac{1}{|\tau_z(B)|}\int_{\tau_z(B)} \left|\mathcal{A}_{\varepsilon}(f)(x)\chi_{\mathcal{R}_{M_\varepsilon+2}}(x)\right|^2dx\right)^{1/2} \lesssim \varepsilon.
\end{align*}
Therefore, we obtain  \eqref{eqn:finial-goal-varepsilon} by taking
 $F_\varepsilon=A_{t_\varepsilon}\left(\mathcal{A}_{\varepsilon}(f)\chi_{\mathcal{R}_{M_\varepsilon+2}}\right)$.  The proof of ``(c) $\Rightarrow$ (a)'' is completed.

\medskip

The implication ``(c) $\Rightarrow$ (d)'' is obvious.

\medskip

{\it Proof of  ``(d) $\Rightarrow$ (c)".}  We first show  $\widetilde{\gamma}_4(f)=0$  can be deduced by $\widetilde{\gamma}_5(f)=0$.  To this end, we use the notation in Lemma~\ref{lem:approx-type-II}, then
	\begin{align*}
		\widetilde{\gamma}_4(f)		\leq & \lim_{a>2^{M_\varepsilon+2},\,  a\to \infty} \ \sup_{Q\cap \mathcal{R}_{M_\varepsilon}=\emptyset:\, \ell(Q)\geq \rho(c_Q) } \left(\frac{1}{|Q|}\int_Q |f(x)|^2 dx \right)^{1/2}\\
		&\qquad+  \lim_{a>2^{M_\varepsilon+2},\,  a\to \infty} \ \sup_{Q\cap \mathcal{R}_{M_\varepsilon}\neq \emptyset:\, \ell(Q)\geq a} \left(\frac{1}{|Q|}\int_Q |f(x)|^2 dx \right)^{1/2}\\
		=: & I(f)+II(f).
	\end{align*}
Observe that $I(f)<\varepsilon/2$ by \eqref{eqn:CMO-e} which is a consequence of  $\widetilde{\gamma}_5(f)=0$.  Besides, for any given $Q$ involved in the term $II(f)$, it's clear that $Q\cap \mathcal{R}_{M_\varepsilon}\neq \emptyset$ and $Q\cap \left(\mathcal{R}_{M_\varepsilon +1}\right)^c \neq \emptyset$, due to $\ell(Q)>2^{M_\varepsilon+2}$ is assumed therein. Therefore, there exists a positive integer $\kappa_Q$ such that
$$
Q\subseteq \mathcal{R}_{\kappa_Q} \quad {\rm and }\quad
   |\mathcal{R}_{\kappa_Q}| \leq  2^{3n}|Q|.
$$
Hence, for such $Q$,
\begin{align*}
	\frac{1}{|Q|}\int_Q |f(x)|^2 dx&\leq  \frac{1}{|Q|}\int_{\mathcal{R}_{\kappa_Q}}  |f(x)|^2 dx\\
&\leq \frac{\|f\|_{L^2(\mathcal{R}_{M_\varepsilon})}^2}{|Q|}  +\frac{2^{3n}}{|\mathcal{R}_{\kappa_Q}|} \sum_{Q_x\subseteq \mathcal{R}_{\kappa_Q}\setminus \mathcal{R}_{M_\varepsilon}}  \int_{Q_x} |f(x)|^2 dx\leq \frac{\|f\|_{L^2(\mathcal{R}_{M_\varepsilon})}^2}{|Q|}  +2^{3n} \cdot  \Big(\frac{\varepsilon}{2}\Big)^2,
\end{align*}
where the last inequality we used
\eqref{P1}, which is  a consequence of  $\widetilde{\gamma}_5(f)=0$. Meanwhile, $\|f\|_{L^2(\mathcal{R}_{M_\varepsilon})}$ is bounded and independent of $Q$, by noticing $f\in L_{\rm loc}^2(\mathbb{R}^n)$.
Combining these estimates above, we obtain
$$
   \widetilde{\gamma}_4(f)\lesssim \varepsilon
$$
for arbitrary given $\varepsilon>0$.

\smallskip

Next, we will show that $\widetilde{\gamma}_2(f)=0$ can be deduced by  $\widetilde{\gamma}_3(f)=0$ and $\widetilde{\gamma}_5(f)=0$. Similarly to the argument in (i) above,
\begin{align*}
	\widetilde{\gamma}_2(f) \leq & \lim_{a>2^{M_\varepsilon +2},\, a\to \infty} \sup_{Q\cap \mathcal{R}_{M_\varepsilon}   =\emptyset, \, \ell(Q)\geq a}  \left(\frac{1}{|Q|}\int_Q |f(x)-f_Q|^2 dx \right)^{1/2}\\
	&\qquad +   2\lim_{a>2^{M_\varepsilon +2},\,a\to \infty} \sup_{Q\cap \mathcal{R}_{M_\varepsilon}   \neq \emptyset, \, \ell(Q)\geq a}  \left(\frac{1}{|Q|}\int_Q |f(x)|^2 dx \right)^{1/2}\\
	=:& I'(f)+2 \cdot II(f),
\end{align*}
where $II(f)$ is the second term occurred in (i) above, and we have shown that $II(f)\lesssim \varepsilon$ holds by $\widetilde{\gamma}_5(f)=0$.
Observe that $I'(f)<\varepsilon/(5\cdot 2^n)$ by \eqref{eqn:CMO-c} which is a consequence of  $\widetilde{\gamma}_3(f)=0$. Hence we obtain $\widetilde{\gamma}_2(f)=0$.

The proof of Theorem C is completed.
\end{proof}

\smallskip

\begin{remark}\label{rem:gamma4}
Assume that $\sup_{x\in \mathbb{R}^n}\rho(x)<+\infty$.  $\widetilde{\gamma}_3(f)=0$ is a consequence of $\widetilde{\gamma}_1(f)=\widetilde{\gamma}_5(f)=0$. We refer to  \cite{D} in the case of $\rho\equiv 1$.
However, in general, we could not deduce $\widetilde{\gamma}_3(f)=0$ by combining $\widetilde{\gamma}_1(f)=0$ and $\widetilde{\gamma}_5(f)=0$. In fact,  one can  construct a function $f\in {\rm BMO}_{\L}$ satisfying $\widetilde{\gamma}_1(f)=\widetilde{\gamma}_5(f)=0$, while $\widetilde{\gamma}_3(f)\neq 0$.

 To clarify this fact,  consider  the potential
 \begin{equation}\label{eqn:special-potential}
     V(x)=\frac{1}{|x|^{2-\varepsilon}},\quad {\rm  where}\quad  \varepsilon=2-(n/q_0)
 \end{equation}
  for any given $q_0>n/2$. Then $V\in {\rm RH}_q$ for any $q<q_0$. See \cite[p. 545]{Shen1}.

We first observe that
\begin{align}\label{example}
\rho(x)\approx |x|^{1-\frac{\varepsilon}{2}} \qquad {\rm for }\quad |x|>> 1.
\end{align}
  In fact, if $|x|>>1$, $y\in B(x,r)$ and $r>|x|/2$, we have $|y|\leq |x|+r\leq 3r$. Then
 \begin{align*}
      I_r(x):=\frac{1}{r^{n-2}}\int_{B(x, r)} V(y)\, d y=\frac{1}{r^{n-2}}\int_{B(x, r)} \frac{1}{|y|^{2-\varepsilon}}\, d y \geq    \frac{v_n}{3^{2-\varepsilon}} r^\varepsilon  \geq \frac{v_n}{3^{2-\varepsilon}} \left(\frac{|x|}{2}\right)^\varepsilon>1,
 \end{align*}
 where $v_n$ is the volume of the unit ball in $\mathbb{R}^n$. So, the conditions $|x|>>1$ and $I_r(x)\leq 1$ imply $r\leq |x|/2$.
Furthermore, it yields $|y|\approx |x|$  whenever $y\in B(x,r)$. One has
 $$
      I_r(x)\approx \frac{r^2}{|x|^{2-\varepsilon}},
 $$
and therefore    $I_r(x)\approx 1$ is equivalent to $ r\approx |x|^{1-\frac{\varepsilon}{2}}$.  By the definition \eqref{eqn:critical-funct},  we showed  \eqref{example}.

Next, let's choose a function $\varphi\in C_c(\mathbb{R}^n)$ satisfying ${\rm supp\,}\varphi\subseteq B(0,1)$ and $\|\varphi\|_{L^\infty}\approx \|\varphi\|_{{\rm BMO}}\approx 1$. Denote $x_k:=(3^k,0,\cdots,0)\in {\mathbb R}^n$ for $k\in \mathbb{N}$. We define
$$
    f(x)=\sum_{k=1}^\infty \varphi(x-x_k),\  x\in \mathbb{R}^n.
$$
Notice that the support sets of $\left\{\varphi(\cdot-x_k)\right\}_{k=1}^\infty$ are mutually disjoint. Then we have that  $f$ is  uniformly continuous and bounded on $\mathbb{R}^n$. So $f\in {\rm BMO}_{\L}$ and $\widetilde{\gamma}_1(f)=0$.
Besides, $\widetilde{\gamma}_3(f)\geq \|\varphi\|_{{\rm BMO}}\approx 1.$

Lastly, let us estimate $\widetilde{\gamma}_5(f)$. Suppose $a>>1$ and $B:=B(x_B,r_B)\subseteq B(0, a)^{c}$ with $r_B\geq \rho(x_B)$. Noting that $|x_B|\geq a+r_B$, it follows from \eqref{example} that $r_B\gtrsim  |x_B|^{1-\frac{\varepsilon}{2}}\geq \left(a+r_B\right)^{1-\frac{\varepsilon}{2}}$.
\begin{align}
|B|^{-1} \int_{B}\big|\sum_{k=1}^\infty \varphi(x-x_k)\big|^{2} d x&=|B|^{-1}\sum_{k: B\cap B(x_k,1)\neq \emptyset} \int_{B}\left|\varphi(x-x_k)\right|^{2} d x\nonumber\\
&\lesssim \frac{\log_3r_B}{\left(a+r_B\right)^{(1-\frac{\varepsilon}{2})n}}\lesssim a^{-\frac{n}{2}(1-\frac{\varepsilon}{2})},
\end{align}
where in the first inequality above we used one observation $\#\{k: B\cap B(x_k,1)\neq \emptyset\}\lesssim \log_3r_B$.
This gives
$$
  \widetilde{\gamma}_5(f)\lesssim \lim_{a\to \infty} a^{-\frac{n}{4}(1-\frac{\varepsilon}{2})}=0.
$$
\end{remark}

\bigskip




\section{Proof of Theorem A}\label{sec:Poisson-CMO-trace}
\setcounter{equation}{0}

With Theorem B and Theorem C at our  disposal,      we now prove   Theorem~A.  Let us  begin by introducing the following key estimates on the space derivative of the Poisson kernel of $e^{-t\sqrt{\L}}$, which were first proved  by Jiang and Li  in \cite{JL}.

\smallskip

\begin{lemma}\label{lem:space-derivative-Poisson}
{\rm (\cite[Proposition~5.2]{JL}.)} \
Let $V\in {\rm RH}_q$ for some $q>n/2$. Suppose $\int_{{\mathbb R}^n} \frac{|f(x)|}{(1+|x|)^{n+1}} \, dx<\infty$.  Then there exists a constant $C>0$ such that for any ball $B=B(x_B,r_B)$, it holds
\begin{align}\label{eqn:aux-Poisson-derivative-1}
     \int_0^{r_B} \int_B \left|t\nabla_x e^{-t\sqrt{\L}} f \right|^2 \frac{dx\, dt}{t}
    \leq C \int_0^{2r_B} \int_{2B} \bigg(\left| t^2 \partial_t ^2  e^{-t\sqrt{\L}} f\right|    \left
    |    e^{-t\sqrt{\L}}f\right|   +\frac{t^2}{r_B ^2}   \left| e^{-t\sqrt{\L}}f \right|^2\bigg)\,\frac{dx\, dt}{t}.
\end{align}
Moreover, for any constant $c_0\neq 0$, it holds

\begin{align}\label{eqn:aux-Poisson-derivative-2}
     \int_0^{r_B} \int_B \left|t\nabla_x e^{-t\sqrt{\L}} f(x) \right|^2 \frac{dx\, dt}{t}
    \leq & C \int_0^{2r_B} \int_{2B} \bigg(\left| t^2 \partial_t ^2  e^{-t\sqrt{\L}} f\right|    \left
    |    e^{-t\sqrt{\L}}f-c_0\right|   +\frac{t^2}{r_B ^2}   \left| e^{-t\sqrt{\L}}f-c_0\right|^2\bigg)\,\frac{dx\, dt}{t}\nonumber\\
    &+ C \int_0^{2r_B}\int_{2B} t \left|e^{-t\sqrt{\L} } f\right|  \left|e^{-t\sqrt{\L} } f-c_0 \right| V dx\, dt.
\end{align}
\end{lemma}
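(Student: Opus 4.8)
The plan is to establish both inequalities by a single Caccioppoli-type argument, using that, under the stated integrability hypothesis, $u:=e^{-t\sqrt{\L}}f$ is well defined and solves $\partial_t^2 u+\Delta_x u=Vu$ in $\mathbb{R}_+^{n+1}$, which is simply $\mathbb{L}u=0$ written out (recall $\mathbb{L}=-\partial_t^2+\L$ and $\L=-\Delta+V$). Fix $B=B(x_B,r_B)$ and choose a cutoff $\eta$ on $\mathbb{R}^n$ with $\eta\equiv1$ on $B$, $\operatorname{supp}\eta\subseteq 2B$, $0\le\eta\le1$ and $|\nabla_x\eta|\lesssim r_B^{-1}$, together with a cutoff $\chi=\chi(t)$ on $(0,\infty)$ with $\chi\equiv1$ on $(0,r_B)$, $\operatorname{supp}\chi\subseteq(0,2r_B)$ and $|\chi'|\lesssim r_B^{-1}$. (Since $u$ need not be regular up to $t=0$, one actually inserts a further cutoff $\psi_\delta(t)$ vanishing for $t\le\delta$ and lets $\delta\to0$ at the end; after placing absolute values inside the $\partial_t^2 u$ term below, all relevant quantities converge monotonically, so the inequality passes to the limit with both sides allowed to be $+\infty$. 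I suppress this routine bookkeeping.)

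The first step is to test the equation $\partial_t^2 u+\Delta_x u=Vu$ against $t\,\eta^2\chi^2(u-c_0)$, where $c_0$ is the given constant, taking $c_0=0$ for \eqref{eqn:aux-Poisson-derivative-1}. Multiplying, integrating over $\mathbb{R}_+^{n+1}$, and integrating by parts in $x$ only (no boundary term appears, as $\eta$ is compactly supported in $2B$, and $\nabla_x(u-c_0)=\nabla_x u$) produces the identity
\begin{align*}
\iint t\,\eta^2\chi^2|\nabla_x u|^2\,dx\,dt
&=\iint t\,\eta^2\chi^2(u-c_0)\,\partial_t^2 u\,dx\,dt
-\iint 2t\,\eta\chi^2(u-c_0)\,\nabla_x\eta\cdot\nabla_x u\,dx\,dt\\
&\quad-\iint t\,\eta^2\chi^2(u-c_0)\,Vu\,dx\,dt.
\end{align*}
The key point is that no integration by parts in $t$ is carried out, so the ``extra'' power of $t$ in the target is obtained for free: $t\,|u-c_0|\,|\partial_t^2 u|=|t^2\partial_t^2 u|\,|u-c_0|\cdot t^{-1}$, whence the first term on the right is at most $\int_0^{2r_B}\!\int_{2B}|t^2\partial_t^2 u|\,|u-c_0|\,\frac{dx\,dt}{t}$.

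It remains to bound the last two terms. By the Cauchy--Schwarz inequality and $|\nabla_x\eta|\lesssim r_B^{-1}$ one has $2t\,|\eta\chi^2(u-c_0)\,\nabla_x\eta\cdot\nabla_x u|\le\frac{1}{2}\,t\,\eta^2\chi^2|\nabla_x u|^2+C\,t\,r_B^{-2}\chi^2|u-c_0|^2$; the first piece is absorbed into the left-hand side, and the second contributes $C\int_0^{2r_B}\!\int_{2B}\frac{t^2}{r_B^2}|u-c_0|^2\,\frac{dx\,dt}{t}$. For the potential term: when $c_0=0$ it equals $-\iint t\,\eta^2\chi^2 Vu^2\le0$ and is simply discarded, which yields \eqref{eqn:aux-Poisson-derivative-1}; for general $c_0$ one has only $|t\,\eta^2\chi^2(u-c_0)\,Vu|\le t\,\eta^2\chi^2|u|\,|u-c_0|\,V$, contributing $C\int_0^{2r_B}\!\int_{2B}t\,|u|\,|u-c_0|\,V\,dx\,dt$, which is precisely the extra term in \eqref{eqn:aux-Poisson-derivative-2}. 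Since $\eta\equiv1$ on $B$ and $\chi\equiv1$ on $(0,r_B)$, the left-hand side dominates $\int_0^{r_B}\!\int_B|t\nabla_x u|^2\,\frac{dx\,dt}{t}$, and collecting the estimates proves both inequalities.

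The point demanding genuine care --- and the main obstacle --- is making the test-function identity rigorous, since $u$ is only a weak solution: it need not be smooth in $x$ (because $V$ is merely in $\mathrm{RH}_q$) nor regular up to $t=0$. For the spatial regularity I would appeal to the weak formulation of $\mathbb{L}u=0$ recalled in the introduction, which already admits Lipschitz test functions, approximating $t\,\eta^2\chi^2(u-c_0)$ by admissible ones; for the behaviour near $t=0$ I would use the $\psi_\delta$-truncation noted above, performing the absorption of $\frac{1}{2}\iint t\,\eta^2\chi^2|\nabla_x u|^2$ only on $\{t>\delta\}$, where this energy is manifestly finite, before letting $\delta\to0$.
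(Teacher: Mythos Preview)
The paper does not supply its own proof of this lemma; it is quoted verbatim from \cite[Proposition~5.2]{JL} and used as a black box in Section~5. Your Caccioppoli-type argument---test the equation $\partial_t^2 u+\Delta_x u=Vu$ against $t\,\eta^2\chi^2(u-c_0)$, integrate by parts only in $x$, absorb the cross term, and drop or keep the potential term according to whether $c_0=0$---is correct and is exactly the standard derivation of such an estimate; it is almost certainly what appears in \cite{JL} as well. The technical caveats you flag (weak-solution formulation, $\psi_\delta$-truncation near $t=0$ to justify the absorption) are the right ones and are routine.
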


\bigskip

We now prove the main result of this article,  Theorem A.

 \begin{proof}[Proof of Theorem~A]
 (i).    If $u\in {\rm HCMO}_{\L}(\mathbb{R}_+^{n+1})$, then $u\in {\rm HMO}_{\L}(\mathbb{R}_+^{n+1})$. By Theorem~1.1 in \cite{DYZ} (or  Theorem~1.1 in \cite{JL}),  there exists a function $f\in {\rm BMO}_{\L}(\mathbb{R}^n)$ such that $u(x,t)=e^{-t\sqrt{\L}}f(x)$ and $\|f\|_{{\rm BMO}_{\L}(\mathbb{R}^n)}\leq C \|u\|_{{\rm HMO}_{\L}(\mathbb{R}_+^{n+1})}$.
  It follows from the definition of $u=e^{-t\sqrt{\L}}f\in {\rm HCMO}_{\L}(\mathbb{R}^n)$ that  $t\sqrt{\L}e^{-t\sqrt{\L}}f\in T_{2,C}^\infty$. Applying Theorem~B, we have $f\in {\rm CMO}_{\L}(\mathbb{R}^n)$ as desired.

\bigskip

(ii).
If $f\in {\rm CMO}_{\L}(\mathbb{R}^n)$,  then $f\in {\rm BMO}_{\L}(\mathbb{R}^n)$. By noting that $V\in {\rm RH}_q$ for some $q\geq (n+1)/2$, it follows from  Theorem~1.1 in \cite{JL} that
$u(x,t):=e^{-t\sqrt{\L}}f(x)\in {\rm HMO}_{\L}(\mathbb{R}_+^{n+1})$ and  $\|u\|_{{\rm HMO}_{\L}(\mathbb{R}_+^{n+1})}\leq C \|f\|_{{\rm BMO}_{\L}(\mathbb{R}^n)}$. Moreover, using Theorem~B, we know $t\partial_t u(x,t)=t\sqrt{\L}e^{-t\sqrt{\L}} f(x)\in T_{2,C}^\infty$. Thus, to prove $u\in {\rm HCMO}_{\L}$, it remains to prove
$\widetilde{\beta}_1(f)=\widetilde{\beta}_2(f)=\widetilde{\beta}_3(f)=0$, where
\begin{align*}
    \widetilde{\beta}_{1}(f)&= \lim _{a \rightarrow 0}\sup _{B: r_{B} \leq a}\left(r_B^{-n} \int_0^{r_B}\int_{B}\left|t\nabla_x e^{-t\sqrt{\L}}f(x)\right|^{2} \frac{d x \, dt}{t}\right)^{1 / 2} ,\\
    \widetilde{\beta}_{2}(f)&= \lim _{a \rightarrow \infty}\sup _{B: r_{B} \geq a}\left(r_B^{-n} \int_0^{r_B}\int_{B}\left|t\nabla_x e^{-t\sqrt{\L}}f(x)\right|^{2} \frac{d x \,dt}{t}\right)^{1 / 2},\\
   \widetilde{ \beta}_{3}(f)&= \lim _{a \rightarrow \infty}\sup _{B \subseteq \left(B(0, a)
    \right)^c}\left(r_B^{-n} \int_0^{r_B}\int_{B}\left|t\nabla_x e^{-t\sqrt{\L}}f(x)\right|^{2} \frac{d x \,dt}{t}\right)^{1 / 2}.
\end{align*}

\medskip

To this end, for any given  ball $B=B(x_B,r_B)\subseteq \mathbb{R}^n$,  split the function $f$ into three parts as follows
$$
    f=(f-f_{4B})\chi_{4B}+(f-f_{4B})\chi_{(4B)^c}+f_{4B}=:f_1+f_2+f_3,
$$
where $4B:=B(x_B,4r_B)$.  For $i=1,2,3$,  we denote
$$
J_{B,i}:=\left(r_B^{-n}\int_0^{r_B}\int_{B} \left|t \nabla_x e^{-t\sqrt{\L}} f_i(x)\right|^2  \frac{dx\, dt}{t} \right)^{1/2}.
$$
Then
$$
\left(r_B^{-n}\int_0^{r_B}\int_{B} \left|t \nabla_x e^{-t\sqrt{\L}} f(x)\right|^2  \frac{dx\, dt}{t} \right)^{1/2}\leq \sum_{i=1}^3J_{B,i}.
$$

Let us first estimate  $J_{B,1}$.  By the well known fact that the Riesz transform $\nabla_x \L^{-1/2}$ is bounded on $L^2(\mathbb{R}^n)$, one may  obtain
\begin{align}\label{eqn:aux-HCMO-1}
	J_{B,1}&=\left( r_B^{-n} \int_0^{r_B}\int_B \left| \nabla_x  \L^{-1/2}  t\sqrt{\L} e^{-t\sqrt{\L}} f_1(x)\right|^2  \frac{dx\, dt}{t}\right)^{1/2} \nonumber\\
	&\leq C \left( r_B^{-n} \int_0^{\infty}\int_{\mathbb{R}^n}  \left|  t\sqrt{\L}  e^{-t\sqrt{\L}} f_1(x)\right|^2  \frac{dx\, dt}{t} \right)^{1/2}\nonumber\\
	&\leq C\left( r_B^{-n}\int_{4B }|f-f_{4B}|^2  dx\right)^{1/2} ,
\end{align}
where  we  used \eqref{eqn:HFC-spectral} in the last inequality above.

\smallskip

Consider the second  term $J_{B,2}$.    One may apply  \eqref{eqn:aux-Poisson-derivative-1}      to obtain
\begin{align*}
    J_{B,2}&\leq C \left(r_B^{-n}   \int_0^{2r_B} \int_{2B} \bigg(\left| t^2 \partial_t ^2  e^{-t\sqrt{\L}} f_2(x)\right|    \left
    |    e^{-t\sqrt{\L}}f_2(x)\right|   +\frac{t^2}{r_B ^2}   \left| e^{-t\sqrt{\L}}f_2 (x)\right|^2\bigg)\,\frac{dx\, dt}{t} \right)^{1/2}.
\end{align*}
Then for any $x\in 2B$ and $t<2r_B$,  it follows from (i) and  (ii) of Lemma~\ref{lem:Poisson-kernels}  to see that for $m=0,2$,

\begin{align*}
	\left|   t^m \partial_t ^m e^{-t\sqrt{\L}}f_2(x)\right| &\leq C\int_{(4B)^c}   \frac{t}{|y-x_B|^{n+1}} \big|f(y)-f_{4B}\big|\, dy\\
	&\leq C \sum_{k=1}^\infty  \frac{t}{(4^k r_B)^{n+1}}  \int_{4^{k+1}B\setminus {4^k B}}  \big|f(y)-f_{4B}\big| \, dy   \\
	&\leq C \left(\frac{t}{r_B}\right)  \sum_{k=1}^\infty 4^{-k} \frac{1}{|4^{k+1}B|} \int_{4^{k+1}B}  \big|f(y)-f_{4B}\big| \, dy.
\end{align*}
Note that
\begin{align*}
\frac{1}{|4^{k+1}B|} \int_{4^{k+1}B}  \big|f(y)-f_{4B}\big| \, dy&\leq \frac{1}{|4^{k+1}B|} \int_{4^{k+1}B}  \big|f(y)-f_{4^{k+1}B}\big| \, dy+\left |f_{4B}-f_{4^{k+1}B}\right |\\
&\leq \frac{1}{|4^{k+1}B|} \int_{4^{k+1}B}  \big|f(y)-f_{4^{k+1}B}\big| \, dy+\sum_{j=1}^k \left |f_{4^{j}B}-f_{4^{j+1}B}\right |\\
&\leq  (4^nk+1) \sup_{1\leq j\leq k}\frac{1}{|4^{j+1}B|} \int_{4^{j+1}B}  \big|f(y)-f_{4^{j+1}B}\big| \, dy.
\end{align*}
Then  for any $x\in 2B$, $t<2r_B$ and $m=0,2$,
$$
    \left|  t^m \partial_t ^m  e^{-t\sqrt{\L}}f_2(x)\right|\leq C\left(\frac{t}{r_B}\right)  \sum_{k=1}^\infty 2^{-k} \sigma_k(f,B),
$$
where
$$
   \sigma_k(f,B):=\sup\limits_{0\leq j\leq k}\frac{1}{|4^{j+1}B|} \int_{4^{j+1}B}  \big|f(y)-f_{4^{j+1}B}\big| \, dy.
$$
This gives
\begin{equation}\label{eqn:aux-HCMO-2}
	J_{B,2}\leq C \sum_{k=1}^\infty 2^{-k}\, \sigma_{k}(f,B).
\end{equation}

\smallskip

Consider the term $J_{B,3}$.
 By applying (i) and  (iii) of Lemma~\ref{lem:Poisson-kernels}, we have
\begin{equation}\label{eqn:aux-Poisson-f3}
     \left| e^{-t\sqrt{\L}} (f_{4B})(x) \right|\leq C |f_{4B}|   \quad {\rm and }\quad
     \left|t^2 \partial_t^2 e^{-t\sqrt{\L}} (f_{4B})(x)\right| \leq C  \left(\frac{t}{\rho(x)}\right)^\delta  \left(1+\frac{t}{\rho(x)}\right)^{-N} |f_{4B}|
\end{equation}
for each $(x,t)\in \mathbb{R}_+^{n+1}$,  where $\delta>0$ is the parameter in  Lemma~\ref{lem:Poisson-kernels}.
We consider  the following two cases.
\smallskip

\noindent{\it Case 1.} $2r_B\geq \rho(x_B)$. In this case, it follows from Corollary~1 in \cite{DGMTZ} that we can select a finite family of critical balls $\left\{B\big(x_i, \rho(x_i)\big)\right\}$ such that
$$
    2B\subseteq \bigcup_i B(x_i, \rho(x_i))\quad {\rm and }\quad \sum_i \left|B\big(x_i, \rho(x_i)\big)\right|\leq c |B|,
$$
where  $c=c(\rho)<\infty$ independent of $B$. Hence, $\rho(x)\approx \rho(x_i)$ for each $x\in B(x_i, \rho(x_i))$, and it follows from  \eqref{eqn:aux-Poisson-derivative-1}  and \eqref{eqn:aux-Poisson-f3}  to see
\begin{align*}
	(J_{B,3})^2 &\leq C   r_B^{-n}\sum_i    \int_0^{2r_B}\int_{B(x_i, \rho(x_i))}  \left[   \left(\frac{t}{\rho(x)}\right)^\delta  \left(1+\frac{t}{\rho(x)}\right)^{-N} |f_{4B}|^2   +   \frac{t^2}{r_B^2 }  |f_{4B}|^2\right] \frac{dx\, dt}{t}\\
	&\leq C |f_{4B}|^2  r_B^{-n}   \sum_i   \left[  \int_0^{\rho(x_i)}\int_{B(x_i, \rho(x_i))}   \left(\frac{t}{\rho(x_i)}\right)^\delta  \frac{dx\, dt}{t}\right.\\
&\hskip 3cm \left.+\int_{\rho(x_i)} ^\infty \int_{B(x_i, \rho(x_i))}  \left(\frac{t}{\rho(x_i)}  \right)^{-N}	 \frac{dx\, dt}{t}  +\left|B\big(x_i, \rho(x_i)\big)\right|\right]\\
	&\leq C |f_{4B}|^2 r_B^{-n} \sum_i \left|B\big(x_i, \rho(x_i)\big)\right|\ \leq C  |f_{4B}|^2.
\end{align*}

\smallskip

\noindent{\it Case 2.} $2r_B< \rho(x_B)$.  For any $x\in 2B$, we get $|x-x_B|<2r_B<\rho(x_B)$.  It follows from  Lemma ~\ref{lem:size-rho}  that $\rho(x)\approx \rho(x_B)$, for any $x\in 2B$.  We then apply   \eqref{eqn:aux-Poisson-derivative-2} by taking the parameter $c_0:=f_{4B}$, and  combine \eqref{eqn:aux-Poisson-f3} and  \eqref{eqn:aux-compare-Poisson} to obtain
\begin{align*}
    \left(J_{B,3}\right)^2 \leq\, & C r_B^{-n}\int_0^{2r_B}\int_{2B}  \left[ |f_{4B}|^2 \left(\frac{t}{\rho(x)}\right)^{\delta+2-n/q} +\frac{t^2}{r_B^2}  |f_{4B}|^2 \left(\frac{t}{\rho(x)}\right)^{2 (2-n/q)}\right]\frac{dx\, dt}{t}\\
    &+ Cr_B^{-n}\int_0^{2r_B}\int_{2B} t |f_{4B}|^2   \left(\frac{t}{\rho(x)}\right)^{2-n/q}  V(x) dx\, dt.
    \end{align*}
By Lemma \ref{V-integral size}, we have the fact  $\int_{2B} V(x)\, dx \leq C (2r_B)^{n-2}$ for $2r_B< \rho(x_B)$. Then,   one can get
$$
 \left(J_{B,3}\right)^2 \leq  C  |f_{4B}|^2  \left(\frac{r_B}{\rho(x_B)}\right)^{\delta}
$$
since   $0<\delta<2-n/q$.

Combining {\it Case 1} and {\it Case 2} above, we obtain
\begin{equation}\label{eqn:aux-HCMO-3}
	J_{B,3} \leq C \, |f_{4B}| \min\bigg\{  \bigg(\frac{2r_B}{\rho(x_B)}\bigg)^{\delta/2}, 1\bigg\}.
\end{equation}

\medskip

We are now in a position to prove the aimed $\widetilde{\beta}_1(f)=\widetilde{\beta}_2(f)=\widetilde{\beta}_3(f)=0$.   By  \eqref{eqn:aux-HCMO-1} and \eqref{eqn:aux-HCMO-2},
 \begin{align}\label{estimate:J1 and J2}
    J_{B,1}+J_{B,2}
  \leq C \sum_{k=1}^\infty 2^{-k}  \sigma_{k}(f,B).
\end{align}	
Since $f\in {\rm CMO}_{\L}$,  it follows  from Theorem~C that  $\widetilde{\gamma}_j(f)=0,  j=1,\dots, 5$.  Then one can show that for any $k\in \mathbb{N}$
\begin{equation}\label{eqn:aux-sigma-k-1}
    \lim_{a\to 0}\sup_{B:\, r_B\leq a} \sigma_{k}(f,B)=\lim_{a\to \infty}\sup_{B:\, r_B\geq a} \sigma_{k}(f,B) =\lim_{a\to \infty} \sup_{B: B\subseteq (B(0,a))^c} \sigma_{k}(f,B)=0.
\end{equation}
 In fact, the first two terms  in \eqref{eqn:aux-sigma-k-1}   vanish due to $\widetilde{\gamma}_1(f)=0$ and $\widetilde{\gamma}_2(f)=0$, respectively. To estimate the third term in \eqref{eqn:aux-sigma-k-1}, for any given large positive number $a$, we  classify balls $B\subseteq (B(0,a))^c$  by  the size of $B$. Then we can use $\widetilde{\gamma}_2(f)=0$ ( in the case of  $r(B)\geq R_0$,  where $R_0>>1$) and  $\widetilde{\gamma}_3(f)=0$ (in the case of $r_B< R_0$) to prove $\lim\limits_{a\to \infty} \sup\limits_{B\subseteq (B(0,a))^c} \sigma_{k}(f,B)=0$.

 For any $\varepsilon>0$, there exists $N>0$, such that $\sum_{k=N}^\infty 2^{-k}<\varepsilon$.  By noting that  $\sigma_{k}(f,B)\leq\|f\|_{\rm BMO}\leq \|f\|_{{\rm BMO}_{\L}}$ for any $k\in \mathbb{N}$, we then have
$$
\sum_{k=1}^\infty 2^{-k}  \sigma_{k}(f,B)\leq \sum_{k=1}^N 2^{-k}  \sigma_{k}(f,B)+\sum_{k=N+1}^\infty 2^{-k} \|f\|_{{\rm BMO}_{\L}}\leq  \sum_{k=1}^N 2^{-k}  \sigma_{k}(f,B)+\varepsilon\|f\|_{{\rm BMO}_{\L}},$$
which, together with \eqref{eqn:aux-sigma-k-1}, gives
\begin{align}
  \lim_{a\to 0}\sup_{B:\, r_B\leq a} \big(J_{B,1}+J_{B_2}\big)=\lim_{a\to \infty}\sup_{B:\, r_B\geq a} \big(J_{B,1}+J_{B_2} \big)=\lim_{a\to \infty} \sup_{B\subseteq (B(0,a))^c} \big(J_{B,1}+J_{B_2} \big)=0.
\end{align}

\smallskip

In the end, we are concerned with the behavior of  $J_{B,3}$ as $B$ is small, or large, or far away from the origin.  Note  that when $r_B<\rho(x_B)/4$,   one can apply Lemma \ref{average on B} to obtain
\begin{align}\label{constant term}
   |f_{4B}|&\leq C\min\bigg\{\,|f|_{B(x_B,\rho(x_B))} \bigg(\frac{\rho(x_B)}{r_B}\bigg)^{n},\,  \|f\|_{{\rm BMO}_{\L}} \bigg(1+\log \frac{\rho(x_B)}{r_B} \bigg)\,\bigg\}\nonumber\\
   &\leq C \left(|f|_{B(x_B,\rho(x_B))} \bigg(\frac{\rho(x_B)}{r_B}\bigg)^{n}\right)^{1-\theta} \left(\|f\|_{{\rm BMO}_{\L}} \left(1+\log \frac{\rho(x_B)}{r_B}\right)\right)^\theta,
\end{align}
for any $\theta\in [0,1]$.
It follows from  \eqref{eqn:aux-HCMO-3} and \eqref{constant term} that
\begin{align*}
	\lim_{a\to \infty}\sup_{B:\, r_B\geq  a,\, r_B< \rho(x_B)/4} J_{B,3}&\lesssim  \lim_{a\to \infty}\sup_{B:\, r_B\geq  a,\, r_B< \rho(x_B)/4} |f_{4B}| \,\bigg(\frac{r_B}{\rho(x_B)}\bigg)^{\delta/2}\\
	&\lesssim \lim_{a\to \infty}\sup_{B:\, r_B\geq  a,\, r_B< \rho(x_B)/4}  |f|_{B(x_B,\rho(x_B))} ^{1-\theta} \bigg(\frac{\rho(x_B)}{r_B}\bigg)^{(n-\delta/2)(1-\theta)-\delta\theta/4}  \|f\|_{{\rm BMO}_{\L}}^\theta
\end{align*}
for any $\theta\in (0,1)$. Take $\theta^*\in (0,1)$ sufficiently close to 1,  such that $(n-\delta/2)(1-\theta^*)-\delta\theta^*/4<0$. Then it follows from $\widetilde{\gamma}_4(f)=0$ that
$$
	\lim_{a\to \infty}\sup_{B:\, r_B\geq  a \, r_B< \rho(x_B)/4} J_{B,3}\lesssim \lim_{a\to \infty}\sup_{B:\, r_B\geq  a,\, r_B< \rho(x_B)/4}  |f|_{B(x_B,\rho(x_B))} ^{1-\theta^*} \|f\|_{{\rm BMO}_{\L}}^{\theta^*} \lesssim \|f\|_{{\rm BMO}_{\L}}^{\theta^*} \left(\widetilde{\gamma}_4(f)\right)^{1-\theta^*}=0,
$$
which, together with  the fact $ \displaystyle \lim_{a\to \infty}\sup_{B:\, r_B\geq \max\{a, \rho(x_B)/4\}}  |f_{4B}| \lesssim \widetilde{\gamma}_4(f)$, implies that
$$
\lim_{a\to \infty}\sup_{B:\, r_B\geq  a} J_{B,3}=0.
$$

Similarly,  one may apply $\widetilde{\gamma}_4(f)=\widetilde{\gamma}_5(f)=0$ to obtain
$$
\lim_{a\to \infty} \sup_{B:B\subseteq (B(0,a))^c,\,r_B\geq \rho(x_B)/4}J_{B,3} \lesssim \lim_{a\to \infty} \sup_{B:B\subseteq (B(0,a))^c,\, r_B\geq \rho(x_B)/4} |f_{4B}|\lesssim \widetilde{\gamma}_5(f)=0
$$
and
\begin{align*}
&\lim_{a\to \infty} \sup_{B:B\subseteq (B(0,a))^c,\, r_B<\rho(x_B)/4} J_{B,3}\lesssim \|f\|_{{\rm BMO}_{\L}}^{\theta^*}
\lim_{a\to \infty} \sup_{B:B\subseteq (B(0,a))^c,\, r_B< \rho(x_B)/4}  |f|_{B(x_B,\rho(x_B))} ^{1-\theta^*}   \\
&\lesssim \|f\|_{{\rm BMO}_{\L}}^{\theta^*}  \lim_{a\to \infty} \left\{\sup_{B:B\subseteq (B(0,a))^c,\, r_B< \rho(x_B)/4<a/8}  |f|_{B(x_B,\rho(x_B))} ^{1-\theta^*}+\sup_{B\subseteq (B(0,a))^c:\, \max\{r_B,a/8\}< \rho(x_B)/4}  |f|_{B(x_B,\rho(x_B))} ^{1-\theta^*}\right\}\\
&\lesssim \|f\|_{{\rm BMO}_{\L}}^{\theta^*}\left\{ \left( \widetilde{\gamma}_5(f)\right)^{1-\theta^*} + \left( \widetilde{\gamma}_4(f)\right)^{1-\theta^*} \right\}=0.
\end{align*}
These give that
\begin{align}\label{B is far}
\lim_{a\to \infty} \sup_{B:B\subseteq (B(0,a))^c}J_{B,3}=0.
\end{align}

\smallskip

 It remains to prove $\displaystyle \lim_{a\to 0}\sup_{B:\, r_B\leq a} J_{B,3}=0$.  Due to \eqref{B is far}, for any $\varepsilon>0$, there exists $R_\varepsilon>>1$ such that $\displaystyle \sup_{B\subseteq B(0,R_\varepsilon)^c}J_{B,3}<\varepsilon$. One can write
 \begin{align}\label{last-JB3}
 	 \lim_{a\to 0}\sup_{B:\, r_B\leq a} J_{B,3}&\leq  \lim_{a\to 0}\sup_{B: B\subseteq B(0,R_\varepsilon+1),\, r_B\leq a} J_{B,3}+  \lim_{a\to 0}\sup_{B: B\subseteq B(0,R_\varepsilon)^c}J_{B,3}\nonumber\\
 &\leq \lim_{a\to 0}\sup_{B:\, B\subseteq B(0,R_\varepsilon+1),\, r_B\leq a} J_{B,3}+\varepsilon.
 \end{align}

  It follows from Lemma~\ref{lem:size-rho} that  $\inf_{x\in B(0,R_\varepsilon+1)} \rho(x)>0$ (we denote the infimum by $m_\varepsilon$). So, if $a<m_\varepsilon/4$, one can apply \eqref{eqn:aux-HCMO-3} and  Lemma \ref{average on B} to get
 \begin{align*}
 	\sup_{B: B\subseteq B(0,R_\varepsilon +1),\, r_B\leq a} J_{B,3}&\leq  C \sup_{B: B\subseteq B(0,R_\varepsilon +1),\, r_B\leq a} |f_{4B}|   \bigg(\frac{r_B}{\rho(x_B)}\bigg)^{\delta/2}\\
 	&\leq C\sup_{B: B\subseteq B(0,R_\varepsilon +1),\, r_B\leq a} \left(1+\log \frac{\rho(x_B)}{4r_B}\right)\|f\|_{{\rm BMO}_{\L}} \bigg(\frac{r_B}{\rho(x_B)}\bigg)^{\delta/2}\\
 	&\leq C \,\|f\|_{{\rm BMO}_{\L}}\frac{a^{\delta/4}}{m_\varepsilon^{\delta/4}} \to 0,  \qquad\quad {\rm as}  \quad a\to 0.
 	 \end{align*}
This, together with \eqref{last-JB3}, implies
$\displaystyle  \lim_{a\to 0}\sup_{B:\, r_B\leq a} J_{B,3}=0$,
as desired.  We finish the proof of (ii)  of Theorem~A.
\end{proof}

\bigskip


 \noindent{\bf Acknowledgments.}
 The authors would like to thank Lixin Yan for helpful suggestions. L.~Song is supported by  NNSF of China (No.~12071490). L.C.~Wu is supported by  Guangdong Basic and Applied Basic Research Foundation (No.~2019A1515110251) and Fundamental Research Funds for the Central Universities (No.~20lgpy141).

 \bigskip


\end{document}